\newcommand{\na}{\nabla}
\newcommand{\mxtr}{\mbox{tr}}
\newcommand{\rw}{\rightarrow}
\newcommand{\vphi}{\varphi}
\newcommand{\lt}{\left}
\newcommand{\rt}{\right}
\newtheorem{theorem}{Theorem}
\newtheorem*{Theorem A}{Theorem A}
\newtheorem*{Theorem B}{Theorem B}
\newtheorem*{Theorem B'}{Theorem B'}
\newtheorem*{Theorem C}{Theorem C}
\newtheorem*{Corollary C}{Corollary C}
\newtheorem*{Theorem D}{Theorem D}
\newtheorem*{Theorem E}{Theorem E}
\newtheorem*{Theorem F}{Theorem F}
\newtheorem{lemma}[theorem]{Lemma}
\newtheorem{lem}[theorem]{Lemma}
\newtheorem{corollary}[theorem]{Corollary}
\newtheorem{proposition}[theorem]{Proposition}
\theoremstyle{remark}
\newtheorem{remark}[theorem]{Remark}
\theoremstyle{definition}
\newtheorem{definition}[theorem]{Definition}
\newtheorem{assumption}[theorem]{Assumption}
\newcommand{\R}{\mathbb R}
\numberwithin{theorem}{section}
\numberwithin{equation}{section}
\newcommand{\udl}{\underline}
\newcommand{\Lb}{\udl{L}}
\newcommand{\Tb}{\udl{T}}
\newcommand{\chib}{\udl{\chi}}
\newcommand{\omegab}{\underline{\omega}}
\newcommand{\pl}{\partial}
\begin{document}
\title{Minkowski formulae and Alexandrov theorems in spacetime}
\author{Mu-Tao Wang, Ye-Kai Wang, and Xiangwen Zhang}
\address{Mu-Tao Wang\\
Department of Mathematics\\
Columbia University, U.S.} \email{mtwang@math.columbia.edu}

\address{Ye-Kai Wang\\
Department of Mathematics\\
Columbia University, U.S.} \email{yw2293@columbia.edu}

\address{Xiangwen Zhang\\
Department of Mathematics\\
Columbia University, U.S.\\
Current: Department of Mathematics, University of California, Irvine} \email{xzhang@math.uci.edu}

\begin{abstract}
The classical Minkowski formula is extended to spacelike codimension-two submanifolds in spacetimes which admit ``hidden symmetry" from conformal Killing-Yano two-forms. As an application, we obtain an Alexandrov type theorem for spacelike codimension-two submanifolds in a static spherically symmetric spacetime: a codimension-two submanifold with constant normalized null expansion (null mean curvature) must lie in a shear-free (umbilical) null hypersurface. These results are generalized for higher order curvature invariants. In particular, the notion of {\it mixed higher order mean curvature} is introduced to highlight the special null geometry of 
the submanifold. Finally, Alexandrov type theorems are established for spacelike submanifolds with constant mixed higher order mean curvature, which are generalizations of hypersurfaces of constant Weingarten curvature in the Euclidean space. 
\end{abstract}

\thanks{M.-T. Wang is supported by NSF grant DMS-1105483 and DMS-1405152. X.W. Zhang is supported by NSF grant DMS-1308136. This work was partially supported by a grant from the Simons Foundation (\#305519 to Mu-Tao Wang).}

\thanks{}
\maketitle

\section{Introduction}
\par
For a smooth closed oriented hypersurface $X: \Sigma \rw \mathbb{R}^n,$ the $k$-th Minkowski formula reads
\begin{equation}\label{Minkowski.formula}
(n-k) \int_\Sigma \sigma_{k-1} d\mu = k\int_\Sigma \sigma_k \langle X,\nu \rangle d\mu
\end{equation} where $\sigma_k$ is the $k$-th elementary symmetric function of the principal curvatures and $\nu$ is the outward unit normal vector field of $\Sigma$. 
\eqref{Minkowski.formula} was proved by Minkowski \cite{Minkowski} for convex hypersurfaces and generalized by Hsiung \cite{Hsiung54} to all hypersurfaces stated above. There are also generalizations for various ambient spaces and higher codimensional  submanifolds \cite{GuanLi, Kwong13, Strubing84}.

The Minkowski formula is closely related to the conformal symmetry of the ambient space. Indeed, the position vector $X$ in \eqref{Minkowski.formula} should be regarded as the restriction of the conformal Killing vector field $r \frac{\pl}{\pl r}$ on the hypersurface $\Sigma.$ In this paper, we make use of the conformal Killing-Yano two-forms (see Definition \ref{C-K}) and discover several new Minkowski formulae for spacelike codimension-two submanifolds in Lorentzian manifolds. Unlike conformal Killing vector fields, conformal Killing-Yano two-forms are the so-called ``hidden symmetry" which may not correspond to any continuous symmetry of the ambient space. 

In the introduction, we specialize our discussion to the Schwarzschild spacetime and spacetimes of constant curvature. Several theorems proved in this article hold in more general spacetimes. The $(n+1)$-dimensional Schwarzschild spacetime with mass $m\geq 0$ is equipped with the metric \begin{equation}\label{Schwarzschild}
\bar{g} = - \lt( 1-\frac{2m}{r^{n-2}} \rt) dt^2 + \frac{1}{1-\frac{2m}{r^{n-2}}} dr^2 + r^2 g_{S^{n-1}}, \ r^{n-2}>2m.
\end{equation} It is the unique spherically symmetric spacetime that satisfies the vacuum Einstein equations. 
Let $Q=r dr \wedge dt$ be the conformal Killing-Yano two-form (see Definition \ref{C-K}) on the Schwarzschild spacetime. 
The curvature tensor of $\bar{g}$ can be expressed in terms of the conformal Killing-Yano two-form $Q$ (see Appendix C). We also denote the Levi-Civita connection of $\bar{g}$
by $D$.

Let $\Sigma$ be a closed oriented spacelike codimension-two submanifold and $\{e_a\}_{a=1,\cdots, n-1}$ be an oriented orthonormal frame of the tangent bundle. Let $\vec{H}$ denote the mean curvature vector of $\Sigma$. We assume the normal bundle of $\Sigma$ is also equipped with an orientation. Let $L$ be a null normal vector 
field along 
$\Sigma$. We define the connection one-form $\zeta_L$ with respect to $L$ by 
\begin{equation}\label{torsion_null0}\zeta_L (V)=\frac{1}{2}\langle D_V L, \underline{L} \rangle \quad \mbox{ for any tangent vector } V \in T\Sigma, \end{equation}
where $\underline{L}$ is another null normal such that $\langle L, \underline{L}\rangle=-2$. $\Sigma$ is said to be {\it torsion-free} with respect to $L$ if $\zeta_L=0$, or 
equivalently, $(D L)^\perp=0$ on $\Sigma$, where $(\cdot)^\perp$ denotes the normal component.

We prove the following Minkowski formula in the Schwarzschild spacetime.

\medskip
\begin{Theorem A} (Theorem \ref{first Minkowski identity})
Consider the two-form $Q = r dr \wedge dt$ on the Schwarzschild spacetime. For a closed oriented spacelike codimension-two submanifold $\Sigma$ in the Schwarzschild spacetime and a null normal vector field $\Lb$ along $\Sigma$, we have
\[ -(n-1) \int_\Sigma \langle \frac{\pl}{\pl t},\Lb \rangle \, d\mu+ \int_\Sigma Q(\vec{H},\Lb) \, d\mu +\sum_{a=1}^{n-1}\int_\Sigma Q(e_a, (D_{e_a} \Lb)^\perp) \,d\mu =0.
\] 
\end{Theorem A}

\medskip

If $\Sigma$ is torsion-free with respect to $\udl L$ for a null frame $L, \Lb$ that satisfies $\langle L, \underline{L}\rangle=-2$, the formula takes the form:
\begin{equation}\label{zero_order}-(n-1) \int_\Sigma \langle \frac{\pl}{\pl t},\Lb \rangle \, d\mu-\frac{1}{2} \int_\Sigma \langle \vec{H}, \udl{L}\rangle Q(L,\Lb) \, d\mu=0. 
\end{equation}
This formula corresponds to the $k=1$ case in \eqref{Minkowski.formula} (see \eqref{recover}) and  is proved in a more general setting, see Theorem \ref{first Minkowski identity}. The quantity $-\langle \vec{H}, L\rangle$  for a null normal $L$ corresponds to the null expansion of the surface in the direction of $L$. Codimension-two submanifolds play a special role in general relativity and their null expansions are closely related to gravitation energy as seen in Penrose's singularity theorem \cite{Penrose65}.

The Minkowski formula has been applied to various problems in global Riemannian geometry (see, for example, the survey paper \cite{Pigola-Rigoli-Setti03} and references therein). One important application is a proof of Alexandrov theorem which states that every closed embedded hypersurface of constant mean curvature (CMC) in $\R^n$ must be a round sphere. For the proof of Alexandrov Theorem and its generalization to various ambient manifolds using the Minkowski formula, see \cite{Brendle13, Montiel99, Montiel-Ros91, QX, Ros87}.
  
In general relativity, the causal future or past of a geometric object is of great importance. It is interesting to characterize when a surface lies in the null hypersurface generated by a ``round sphere." These are called ``shear-free" null hypersurfaces (see Definition \ref{shearfree}) in general relativity literature, and are analogues of umbilical hypersurfaces in Riemannian geometry.
\par


As an application of the Minkowski formula in the Schwarzschild spacetime, we give a characterization of spacelike codimension-two submanifolds in a null hypersurface of symmetry in terms of constant null expansion.
\medskip
\begin{Theorem B} \label{theoremB} (Theorem \ref{Alexandrov theorem for 2-surface in the Schwarzschild spacetime})
Let $\Sigma$ be a future incoming null embedded (see Definition \ref{future incoming null embedded}) closed spacelike codimension-two submanifold in the $(n+1)$-dimensional Schwarzschild spacetime. Suppose there is a future incoming null normal vector field $\underline{L}$ along $\Sigma$ such that $\langle \vec{H}, \underline{L}\rangle$ is a positive constant and $(D\underline{L})^\perp=0$. Then $\Sigma$ lies in a null hypersurface of symmetry.
\end{Theorem B}
\medskip

A natural substitute of CMC condition for higher codimensional submanifolds is to require the mean curvature vector field to be parallel as a section of the normal bundle. Yau \cite{Y} and Chen \cite{Chen} proved that a closed immersed spacelike 2-sphere with parallel mean curvature vector in the Minkowski spacetime must be a round sphere. We are able to generalize their results to the Schwarzschild spacetime.
\begin{Corollary C}(Corollary \ref{parallel})
Let $\Sigma$ be a closed embedded spacelike codimension-two submanifold with parallel mean curvature vector in the $(n+1)$-dimensional Schwarzschild spacetime. Suppose $\Sigma$ is both future and past incoming null embedded. Then $\Sigma$ is a sphere of symmetry. 
\end{Corollary C} 

Besides the Minkowski formula mentioned above, another important ingredient of the proof for Theorem B is a spacetime version of the Heintze-Karcher type inequality of Brendle \cite{Brendle13}. Brendle's inequality was used to prove the rigidity property of CMC hypersurfaces in warped product manifolds including the important case of a time slice in the Schwarzschild spacetime. Following his approach and generalizing a monotonicity
formula of his, we establish a spacetime version of this inequality (see Theorem \ref{Spacetime Heintze-Karcher inequality}) in Section 3.

\bigskip
Formula (\ref{zero_order}) can be viewed as a spacetime version of the Minkowski formula (\ref{Minkowski.formula}) with $k=1$.  In the second part of this paper, we take care of the case for general $k$. We introduce the notion of  mixed higher order mean curvature $P_{r, s}(\chi, \udl{\chi})$ for codimension-two submanifolds in spacetime, which generalizes
the notion of Weingarten curvatures for hypersurfaces in the Euclidean space. The mixed higher order mean curvature is derived from the two null second fundamental forms $\chi$ and $\udl{\chi}$ of the submanifold with respect to the null normals $L$ and $\Lb$, respectively. In the hypersurface case, there is only one second fundamental form $A$ and the Weingarten curvatures are defined as the elementary symmetric functions of $A$, $\sigma_k(A)$. Here, motivated by an idea of Chern \cite{Chern} (see also \cite{guan-shen}) in his study of Alexandrov's uniqueness theorem, we define the mixed higher order mean curvature $P_{r,s}(\chi, \udl{\chi})$, with $1\leq r+s \leq n-1$, see Definition \ref{prs}. It turns out that those quantities share some nice properties of $\sigma_k(A)$. First, we establish the following spacetime Minkowski formulae.

\begin{Theorem D}\label{minkowski0}(Theorem \ref{minkowski1})
Let $\Sigma$ be a closed spacelike codimension-two submanifold in a spacetime of constant curvature. Suppose $\Sigma$ is torsion-free with respect 
to the null frame $L$ and $\Lb$. Then
\begin{align}\label{Minkowski formula for higher order mean curvatureL} 
2\int_\Sigma P_{r-1, s}(\chi,\udl\chi)\langle  L, \frac{\partial}{\partial t}\rangle d\mu +\frac{r+s}{n-(r+s)} \int_\Sigma P_{r, s}(\chi,\udl\chi)Q(L, \udl{L}) d\mu =0 
\end{align}
and
\begin{align}\label{Minkowski formula for higher order mean curvatureLb} 
2\int_\Sigma P_{r, s-1}(\chi,\udl\chi)\langle  \Lb, \frac{\partial}{\partial t}\rangle d\mu -\frac{r+s}{n-(r+s)}  \int_\Sigma P_{r, s}(\chi,\udl\chi) Q(L, \udl{L}) d\mu =0. 
\end{align}
\end{Theorem D}
\eqref{zero_order} is a special case of \eqref{Minkowski formula for higher order mean curvatureLb} for torsion-free submanifolds ($r=0, s=1$) in a spacetime of constant curvature. Moreover, the classical Minkowski formulae (\ref{Minkowski.formula}) for hypersurfaces in Riemannian space forms (Euclidean space, hemisphere, hyperbolic space) can be recovered by \eqref{Minkowski formula for higher order mean curvatureL} and \eqref{Minkowski formula for higher order mean curvatureLb}, see \eqref{recover}.
As applications of these Minkowski formulae, we obtain Alexandrov type theorems with respect to mixed higher order mean curvature for torsion-free submanifolds in a spacetime of constant curvature, as a generalization of Theorem B.

 \begin{Theorem E} (Theorem \ref{Alexandrov theorem for Weingarten submanifolds in Minkowski spacetime})
Let $\Sigma$ be a past (future, respectively) incoming null embedded, closed spacelike codimension-two submanifold in an $(n+1)$-dimensional spacetime
of constant curvature. Suppose $\Sigma$ is torsion-free with respect to $L$ and $\Lb$ and the second fundamental form $\chi \in \Gamma_r$ ($-\chib \in \Gamma_s$, respectively). If $P_{r,0}(\chi, \udl\chi) = C$ ($P_{0,s} (\chi, \udl\chi)= (-1)^s C$, respectively) for some positive constant $C$ on $\Sigma$, then $\Sigma$ lies in a null hypersurface of symmetry.
\end{Theorem E}
Moreover, we show that a codimension-two submanifold in the Minkowski spacetime with $P_{r, s}(\chi, \udl\chi) = constant$ for $r>0, s>0$, satisfying other mild conditions, must be a sphere of symmetry. See Theorem \ref{Alexandrov for mixed higher mean curvature} for details.

\medskip
\par
In the proof of the spacetime Minkowski formulae (\ref{Minkowski formula for higher order mean curvatureL}) (\ref{Minkowski formula for higher order mean curvatureLb}), we make crucial use of a certain divergence property of $P_{r, s}(\chi, \udl \chi)$ for torsion-free submanifolds in spacetimes of constant curvature. Unfortunately, this property no longer holds for codimension-two submanifolds in the Schwarzschild spacetime because of non-trivial ambient curvature. However, under some assumption on the restriction of the conformal Killing-Yano two-form $Q$ to the submanifold, we establish integral inequalities (see Theorem \ref{Minkowski in Schwarzschild}) that imply Alexandrov type theorems in the Schwarzschild spacetime (see Corollary \ref{Alexandrov
in Schwarzschild}).

For 2-surfaces in the 4-dimensional Schwarzschild spacetime, we obtain a clean integral formula involving the total null expansion of $\Sigma$.

\par
\begin{Theorem F}(Theorem \ref{4dimensional Schwarzschild})
Consider the two-form $Q=rdr\wedge dt$ on the 4-dimensional Schwarzschild spacetime with $m\geq 0$. For a closed oriented spacelike 2-surface $\Sigma$, we have
\begin{align}
&2\int_\Sigma \langle \vec{H}, L\rangle \langle \udl L,\frac{\pl}{\pl t} \rangle d\mu \\\nonumber
= &- 16\pi m + \int_\Sigma \left\{\lt(R+ \frac{1}{4} \bar{R}_{L \Lb L \Lb} \rt) Q(L,\Lb) + \sum_{b,c=1}^2 \lt( \frac{1}{2} \bar{R}_{bc\Lb L} - 2(d\zeta_L)_{bc} \rt)Q_{bc} \right\}d\mu.
\end{align}
where $\zeta_L$ is the connection 1-form of the normal bundle with respect to $L$, $\bar{R}$ is the curvature tensor of the Schwarzschild spacetime, $R$ is the scalar curvature of $\Sigma$, and $Q_{bc}=Q(e_b, e_c)$, $(d\zeta_L)_{bc}=(d\zeta_L)(e_b, e_c)$, etc.
\end{Theorem F}

 The total null expansion $-\int_\Sigma \langle \vec{H}, L\rangle \langle \underline{L}, \frac{\partial}{\partial t}\rangle d\mu$ appears in the Gibbons-Penrose
inequality, see for example \cite{BW}.

\medskip
The rest of the paper is organized as follows. In section 2, we derive a simple case of spacetime Minkowski formula and give the proof of Theorem A. In section 3, we study a monotonicity formula and the spacetime Heintze-Karcher inequality. As an application, we prove the Alexandrov type theorems, Theorem B and Corollary C. In section 4, we introduce the notion of {\it mixed higher order mean curvatures} and establish spacetime Minkowski formulae for closed spacelike codimension-two submanifolds in constant curvature spacetimes, Theorem D. Moreover, we show the classical Minkowski formula (\ref{Minkowski.formula}) is recovered.
As an application of Theorem D, Alexandrov type theorems for submanifolds of constant mixed mean curvature in a spacetime of constant curvature are proved in section 5. In section 6, we generalize the integral formulae to the Schwarzschild spacetime. In particular, Theorem F is proved. At last,  the Appendix contains some computations used throughout the paper.

\medskip
{\bf Acknowledgements.} The authors would like to thank Simon Brendle, Po-Ning Chen, Shing-Tung Yau for helpful discussions. In particular, they learned a version of Theorem \ref{Spacetime Heintze-Karcher inequality} in the Minkowski spacetime from Brendle. The third-named author would also like to thank Pengfei Guan and Niky Kamran for their interest and penetrating remarks. Part of the work was done while all the authors were visiting the National Center for Theoretical Sciences and Taida Institute of Mathematical Science in Taipei, Taiwan. They would like to express their gratitude to the nice working environment that was provided during their visit. Finally, they wish to thank the referee for his/her valuable comments and suggestions.

\bigskip

\section{A Minkowski formula in spacetime}
\par
Let $F: \Sigma^{n-1} \rightarrow (V^{n+1},\langle,\rangle)$ be a closed immersed oriented spacelike codimension-two submanifold in an oriented (n+1)-dimensional Lorentzian manifold $(V^{n+1},\langle,\rangle).$ Denote the induced metric on $\Sigma$ by $\sigma$. We assume the normal bundle is also orientable and choose a coordinate system $\{u^a |\,\, a=1,2, \cdots,n-1\}$. We identify $\frac{\pl F}{\pl u^a}$ with $\frac{\pl}{\pl u^a}$, which is abbreviated as $\pl_a$. Let $D$ and $\na$ denote the Levi-Civita connection of $V$ and $\Sigma$ respectively. 

We recall the definition of conformal Killing-Yano two-forms.  
\begin{definition}\label{C-K}\cite[Definition 1]{JL}
Let $Q$ be a two-form on an $(n+1)$-dimensional pseudo-Riemannian manifold $(V, \langle,\rangle)$ with Levi-Civita connection $D$.  $Q$ is said to be a conformal Killing-Yano two-form if 
\begin{align}\label{CKY equation}
&(D_X Q)(Y,Z)+(D_Y Q)(X,Z) =\frac{2}{n}\left(\langle X, Y\rangle \langle \xi,Z \rangle - \frac{1}{2} \langle X, Z\rangle \langle \xi,Y \rangle - \frac{1}{2}\langle Y, Z\rangle \langle \xi,X \rangle \right)
\end{align}
for any tangent vectors $X$, $Y$ and $Z,$ where $\xi = div_V Q $.
\end{definition}
\medskip
In mathematical literature,  conformal Killing-Yano two-forms were introduced by Tachibana \cite{Tachibana69}, based on Yano's work on Killing forms. More generally, Kashiwada introduced the conformal Killing-Yano p-forms \cite{Kashiwada68}. 

It is well-known that there exists a conformal Killing-Yano two-form $Q$ on the Kerr spacetime with $\xi$ being a multiple of the stationary Killing vector $\frac{\pl}{\pl t}$, see \cite{JL}. We also show the existence of conformal Killing-Yano forms on a class of warped product manifolds in Appendix B.

\medskip

As mentioned in the introduction, we make use of the conformal Killing-Yano two-forms in Lorentzian manifolds to discover some new Minkowski formulae for the spacelike codimension-two submanifolds.

\begin{theorem}\label{first Minkowski identity}
Let $\Sigma$ be a closed immersed oriented spacelike codimension-two submanifold in an $(n+1)$-dimensional Riemannian or Lorentzian manifold $V$ that possesses a conformal Killing-Yano two-form $Q$. For any null normal vector field $\Lb$ of $\Sigma,$ we have
\begin{align}\label{k=1 Minkowski formula}
\frac{n-1}{n}\int_\Sigma \langle \xi,\Lb \rangle \, d\mu+ \int_\Sigma Q(\vec{H},\Lb) \, d\mu +\int_\Sigma Q(\pl_a, (D^a \Lb)^\perp) \,d\mu =0,
\end{align} where $\xi=div_V Q$.
\end{theorem}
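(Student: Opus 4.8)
The plan is to exhibit the left-hand side of \eqref{k=1 Minkowski formula} as the integral of a pointwise divergence of a tangential one-form on $\Sigma$, so that it vanishes by the divergence theorem on the closed manifold $\Sigma$. Fix an oriented orthonormal frame $\{e_a\}_{a=1}^{n-1}$ of $T\Sigma$ and define the one-form $\alpha$ on $\Sigma$ by $\alpha(e_a)=Q(e_a,\Lb)$. The entire argument then reduces to computing $\mathrm{div}_\Sigma \alpha=\sum_a(\na_{e_a}\alpha)(e_a)=\sum_a\big[e_a(Q(e_a,\Lb))-Q(\na_{e_a}e_a,\Lb)\big]$ and matching its three constituent terms with the three integrands in the statement.

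First I would expand $e_a\big(Q(e_a,\Lb)\big)=(D_{e_a}Q)(e_a,\Lb)+Q(D_{e_a}e_a,\Lb)+Q(e_a,D_{e_a}\Lb)$ using the ambient connection $D$. Decomposing $D_{e_a}e_b=\na_{e_a}e_b+\mathrm{II}(e_a,e_b)$ into its tangential and normal (vector-valued second fundamental form) parts, the term $Q(\na_{e_a}e_a,\Lb)$ cancels against the $-Q(\na_{e_a}e_a,\Lb)$ already present in the divergence, while summing the normal contribution assembles into $Q(\vec H,\Lb)$ since $\sum_a\mathrm{II}(e_a,e_a)=\vec H$. Next I would split $Q(e_a,D_{e_a}\Lb)$ into tangential and normal pieces. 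For the tangential piece I would use that $\Lb$ is normal, so that $\langle D_{e_a}\Lb,e_b\rangle=-\langle \Lb,\mathrm{II}(e_a,e_b)\rangle$ is \emph{symmetric} in $a,b$; contracting it against the \emph{antisymmetric} $Q(e_a,e_b)$ forces the tangential contribution to vanish, leaving precisely $\sum_a Q(e_a,(D_{e_a}\Lb)^\perp)$.

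The remaining term $\sum_a(D_{e_a}Q)(e_a,\Lb)$ is where the hidden symmetry enters. Setting $X=Y=e_a$ and $Z=\Lb$ in the conformal Killing-Yano equation \eqref{CKY equation} and summing over $a$, the left side becomes $2\sum_a(D_{e_a}Q)(e_a,\Lb)$; on the right side $\langle e_a,e_a\rangle=1$ for the spacelike frame and $\langle e_a,\Lb\rangle=0$ since $\Lb$ is normal, so all but the first term drop and the right side collapses to $\tfrac{2}{n}(n-1)\langle\xi,\Lb\rangle$. Hence $\sum_a(D_{e_a}Q)(e_a,\Lb)=\tfrac{n-1}{n}\langle\xi,\Lb\rangle$. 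Collecting the pieces gives the pointwise identity
\[
\mathrm{div}_\Sigma\alpha=\frac{n-1}{n}\langle\xi,\Lb\rangle+Q(\vec H,\Lb)+\sum_a Q(e_a,(D_{e_a}\Lb)^\perp),
\]
and integrating over the closed $\Sigma$ yields \eqref{k=1 Minkowski formula}.

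The calculation is essentially bookkeeping, so the delicate points are organizational rather than analytic: peeling off the tangential and normal parts of $D_{e_a}e_b$ and of $D_{e_a}\Lb$ consistently, and spotting the symmetry-antisymmetry cancellation that eliminates the tangential derivative of $\Lb$. I expect the contraction of the conformal Killing-Yano equation to be the conceptual crux, since it is the step that converts the ambient ``hidden symmetry'' of $Q$ into the divergence term $\langle\xi,\Lb\rangle$ and produces the coefficient $\tfrac{n-1}{n}$. I would also note that this derivation uses only that $\Lb$ is normal, not that it is null; the null restriction in the statement is dictated by the intended applications rather than by the computation.
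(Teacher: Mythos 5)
Your proposal is correct and follows essentially the same route as the paper: both integrate the divergence of the one-form $\alpha(\cdot)=Q(\cdot,\Lb)$ on $\Sigma$, use the trace of the conformal Killing-Yano equation to produce $\tfrac{n-1}{n}\langle\xi,\Lb\rangle$, and kill the tangential part of $D\Lb$ by pairing the symmetric $\underline{\chi}_{ab}$ against the antisymmetric $Q_{ab}$. Your closing observation that only the normality (not nullity) of $\Lb$ is used is also consistent with the paper's computation.
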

\begin{proof}
Let $\underline{\chi}_{ab} = \langle D_a \Lb, \pl_b \rangle.$ Consider the one-form $\mathcal{Q} = Q(\pl_a,\Lb) du^a$ on $\Sigma$. We derive
\begin{align}\label{derivation}
\begin{split}
div_\Sigma \mathcal{Q} &= \na^a \mathcal{Q}_a - Q(\na^a \pl_a,\Lb) \\
&= (D^a Q)(\pl_a,\Lb) + Q(\vec{H},\Lb) + Q(\pl_a,D^a \Lb) \\
&= \frac{n-1}{n} \langle \xi,\Lb \rangle + Q(\vec{H},\Lb) + \underline{\chi}_{ab}Q^{ab} + Q(\pl_a,(D^a \Lb)^\perp) \\
&= \frac{n-1}{n} \langle \xi,\Lb \rangle + Q(\vec{H},\Lb) +  Q(\pl_a,(D^a \Lb)^\perp).
\end{split}
\end{align}
The assertion follows by integrating over $\Sigma.$ 
\end{proof}
In the case of the Schwarzschild spacetime, we take $Q=rdr\wedge dt$, then $\xi= -n\frac{\partial}{\partial t}$ and Theorem A follows from the general formula (\ref{k=1 Minkowski formula}).

\section{ An Alexandrov Theorem in spacetime}
\medskip

\subsection{A monotonicity formula}

\mbox{}\\

In this section, we assume that $\Sigma$ is a spacelike codimension-two submanifold with spacelike mean curvature vector in a Lorentzian manifold $V$ that possesses a conformal Killing-Yano two-form $Q$. We fix the sign of $Q$ by requiring $\xi := div_V Q$ to be past-directed timelike. Let $\Lb$ be a future incoming null normal and $L$ be the null normal with $\langle L,\Lb \rangle = -2$.   We note that the choice of $L,\Lb$ is unique up to the scaling $L \rw aL, \Lb \rw \frac{1}{a} \Lb$ by a function $a$ on $\Sigma$. Define the null second fundamental forms with respect to $L, \Lb$ by
\begin{align*}
\chi_{ab} &=\langle D_{\partial_a} L, \partial_b\rangle, \ \ \ \  \underline{\chi}_{ab} =\langle D_{\partial_a}\udl{ L}, \partial_b\rangle.
\end{align*}
Suppose $\langle \vec{H}, \Lb\rangle\not=0$ on $\Sigma$, define the functional
\begin{equation}\label{F_functional}
\mathcal{F}(\Sigma, [\Lb]) = \frac{n-1}{n} \int_\Sigma \dfrac{\langle \xi,\Lb \rangle}{\langle \vec{H},\Lb \rangle} d\mu - \frac{1}{2}\int_\Sigma Q(L,\Lb) d\mu.
\end{equation}
Note that $\mathcal{F}$ is well-defined in that it is invariant under the change $L \rw aL, \Lb \rw \frac{1}{a} \Lb.$ 

Let $\underline{C}_0$ denote the future incoming null hypersurface of $\Sigma$. $C_0$ is obtained by taking the collection
of all null geodesics emanating from $\Sigma$ with initial velocity $\underline{L}$.
We also extend $\Lb$ arbitrarily to a future-directed null vector field along $\underline{C}_0,$ still denoted by $\Lb$. Consider the evolution of $\Sigma$ along $\underline{C}_0$ by a family of immersions $F: \Sigma \times [0,T) \rw \underline{C}_0$ satisfying
\begin{align}\label{flow}
\left\{ \begin{array}{rl}
\frac{\pl F}{\pl s}(x, s) \! \!\!\!&= \vphi(x,s)\Lb \\
F(x,0) \!\!\!\!&= F_0(x).
\end{array} \right.
\end{align}
for some positive function $\vphi(x,s).$

We recall the following spacetime curvature condition \cite[page 95]{Hawking-Ellis} which plays an important role in the 
monotonicity formula. 

\begin{definition}
A Lorentzian manifold is said to satisfy the null convergence condition if
\begin{equation}\label{null_convergence}
Ric(L,L) \geq 0 \quad \mbox{for any null vector} \, L.
\end{equation}
\end{definition}
This is exactly the curvature assumption made in Penrose's  celebrated singularity theorem \cite[page 263, Theorem 1]{Hawking-Ellis}.

We prove the following monotonicity formula along the flow $F$:
\begin{theorem}\label{monotonicity}
Let $F_0:\Sigma \rw V$ be an immersed closed oriented spacelike codimension-two submanifold in a Lorentzian manifold $V$ with a conformal 
Killing-Yano two-form $Q$ that satisfies either one of the following assumptions
\begin{enumerate}\label{assumption: null convergence condition}
\item $V$ is vacuum (possibly with cosmological constant). 
\item $\xi=div_V Q$ is a Killing field and $V$ satisfies the null convergence condition.
\end{enumerate} Suppose that $\langle \vec{H},\Lb \rangle >0$ on $\Sigma$ for some future-directed incoming null normal vector field $\Lb.$  
Then $\mathcal{F}(F(\Sigma,s), [\udl L])$ is monotone decreasing along the flow. 
\end{theorem}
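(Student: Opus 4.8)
The plan is to realize $\mathcal{F}(F(\Sigma,s),[\Lb]) = \int_\Sigma \Phi \, d\mu$ with $\Phi = \tfrac{n-1}{n}\tfrac{\langle \xi,\Lb\rangle}{\langle \vec H,\Lb\rangle} - \tfrac12 Q(L,\Lb)$, and to differentiate in $s$ along the flow \eqref{flow}, normalizing $\Lb$ so that it is affinely geodesic on $\underline C_0$ (so $D_{\Lb}\Lb=0$) and writing $f := \langle \vec H,\Lb\rangle$, so that $f = -\mxtr\chib > 0$. Four evolution facts drive the computation. (i) The first variation of area gives $\tfrac{\pl}{\pl s}\,d\mu = \vphi\,(\mxtr\chib)\,d\mu = -\vphi f\,d\mu$. (ii) Since $\xi$ is Killing --- the hypothesis in case (2), and true in case (1) because an Einstein (vacuum) metric forces $\xi = div_V Q$ to be Killing --- antisymmetry of $D\xi$ yields $\tfrac{\pl}{\pl s}\langle\xi,\Lb\rangle = \vphi\langle D_{\Lb}\xi,\Lb\rangle = 0$. (iii) As $\Lb$ generates $\underline C_0$, the null expansion $\mxtr\chib$ is a function on $\underline C_0$ independent of the cross-section, so the Raychaudhuri equation along the generators gives $\tfrac{\pl}{\pl s} f = \vphi\,(|\chib|^2 + Ric(\Lb,\Lb))$. (iv) Feeding $X=Y=\Lb$, $Z=L$ into \eqref{CKY equation} and using $\langle \Lb,\Lb\rangle=0$, $\langle L,\Lb\rangle=-2$ gives $(D_{\Lb}Q)(L,\Lb) = -\tfrac{2}{n}\langle\xi,\Lb\rangle$.

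The one genuinely delicate derivative is $\tfrac{\pl}{\pl s}Q(L,\Lb)$, because $L$ is the null normal adapted to the \emph{moving} leaf $\Sigma_s$: differentiating the constraints $\langle L,\pl_a F\rangle=0$ and $\langle L,\Lb\rangle=-2$ shows $D_{\pl_s}L$ is tangential with $\langle D_{\pl_s}L,\pl_a\rangle = 2\pl_a\vphi - 2\vphi\,\zeta_{\Lb}(\pl_a)$, where $\zeta_{\Lb}$ is the connection one-form. Combining this with (iv) I expect $\tfrac{\pl}{\pl s}Q(L,\Lb) = -\tfrac{2\vphi}{n}\langle\xi,\Lb\rangle + \vphi\,Q(\eta,\Lb)$ with $\eta^a = \sigma^{ab}(\tfrac{2}{\vphi}\pl_b\vphi - 2\zeta_{\Lb}(\pl_b))$, so that both a $\na\vphi$-term and a torsion term enter.

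Assembling (i)--(iv), the $\langle\xi,\Lb\rangle$-contributions collect into $\tfrac1n\int_\Sigma \vphi\langle\xi,\Lb\rangle\,d\mu$, and the hard part is to show that the remaining $\na\vphi$- and torsion-dependent terms cancel, so that $\tfrac{d}{ds}\mathcal F$ is independent of the reparametrization $\vphi$. The tool for this is the divergence identity behind Theorem \ref{first Minkowski identity}, namely $\tfrac{f}{2}Q(L,\Lb) = \tfrac{n-1}{n}\langle\xi,\Lb\rangle + Q(\pl_a,(D^a\Lb)^\perp) - div_\Sigma\mathcal Q$ with $\mathcal Q_a = Q(\pl_a,\Lb)$. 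Integrating $\int_\Sigma \vphi\, div_\Sigma\mathcal Q$ by parts produces a $\na^a\vphi\, Q(\pl_a,\Lb)$ term that exactly cancels the $\na\vphi$-piece of $\vphi Q(\eta,\Lb)$, while the torsion piece of $\vphi Q(\eta,\Lb)$ cancels $\vphi\,Q(\pl_a,(D^a\Lb)^\perp) = -\vphi\,\zeta_{\Lb}^a Q(\pl_a,\Lb)$. After these cancellations I expect
\[
\frac{d}{ds}\mathcal F = -\frac{n-1}{n}\int_\Sigma \frac{\langle\xi,\Lb\rangle}{f^2}\,\vphi\,\big(|\chib|^2 + Ric(\Lb,\Lb)\big)\,d\mu + \frac1n\int_\Sigma \vphi\,\langle\xi,\Lb\rangle\,d\mu.
\]

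To close, I would invoke Cauchy--Schwarz in the form $|\chib|^2 \ge \tfrac{(\mxtr\chib)^2}{n-1} = \tfrac{f^2}{n-1}$. Since the sign of $Q$ was fixed so that $\xi$ is past-directed timelike while $\Lb$ is future-directed null, we have $\langle\xi,\Lb\rangle \ge 0$; together with $f>0$, $\vphi>0$ this gives $-\tfrac{n-1}{n}\int \tfrac{\langle\xi,\Lb\rangle}{f^2}\vphi|\chib|^2 d\mu \le -\tfrac1n\int \vphi\langle\xi,\Lb\rangle d\mu$, which absorbs the residual $+\tfrac1n\int\vphi\langle\xi,\Lb\rangle d\mu$ exactly. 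What survives is $\tfrac{d}{ds}\mathcal F \le -\tfrac{n-1}{n}\int_\Sigma \tfrac{\langle\xi,\Lb\rangle}{f^2}\vphi\,Ric(\Lb,\Lb)\,d\mu \le 0$, the last inequality by the null convergence condition in case (2) and by $Ric(\Lb,\Lb)=0$ for the null vector $\Lb$ in the Einstein case (1). This gives the monotone decay; moreover equality forces $\chib$ to be pure trace (shear-free) and $Ric(\Lb,\Lb)=0$, which is the rigidity input for the subsequent Heintze--Karcher argument. The main obstacle is precisely the $\vphi$/torsion bookkeeping of the third paragraph --- verifying reparametrization-invariance of $\tfrac{d}{ds}\mathcal F$ --- rather than any single analytic estimate.
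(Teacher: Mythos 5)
Your proposal is correct and follows essentially the same route as the paper: compute the $s$-derivative of each term of $\mathcal{F}$, use the conformal Killing--Yano equation together with the divergence identity \eqref{derivation} to show $\frac{\pl}{\pl s}\int_\Sigma Q(L,\Lb)\,d\mu=-2\int_\Sigma\vphi\langle\xi,\Lb\rangle\,d\mu$ (with the $\na\vphi$ and torsion terms cancelling exactly as you describe), and close with the Raychaudhuri equation and Cauchy--Schwarz. The only differences are cosmetic: you fix an affine extension of $\Lb$ (killing the $\omegab$ terms the paper carries along, which is legitimate since $\mathcal{F}$ is invariant under rescaling of the null frame), and in the vacuum case you note $\xi$ is Killing where the paper instead invokes the identity $\langle D_{\Lb}\xi,\Lb\rangle=\frac{n}{n-1}Ric_{a\Lb}\tensor{Q}{^a_\Lb}=0$ from \cite{JL}; both yield the same conclusion.
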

\begin{proof}
We start with the evolution of the first term of $\mathcal{F}$. Suppose $D_{\Lb} \Lb = \omegab \Lb$ for a function $\omegab$. The Raychaudhuri equation \cite[(9.2.32)]{Wald} implies
\begin{equation}\label{positive_null_expansion}
\begin{split}
\frac{\pl}{\pl s} \langle \vec{H},\Lb \rangle &= \vphi \left( |\chib|^2 	+\omegab \langle \vec{H}, \Lb \rangle + Ric(\Lb,\Lb) \right)\\
&\geq \vphi \left( |\chib|^2 +\omegab \langle \vec{H}, \Lb \rangle \right),
\end{split}
\end{equation}
where $|\chib|^2 = \chib^{ab} \chib_{ab}$.
On the other hand,
\begin{align*}
\frac{\pl}{\pl s} \langle \xi,\Lb \rangle = \vphi \left( \langle D_{\Lb} \xi,\Lb \rangle + \omegab \langle \xi,\Lb \rangle \right).
\end{align*} 
If $V$ satisfies assumption (1), by \cite[equation (19)]{JL}, we have
\begin{align*}
\langle D_{\Lb} \xi,\Lb \rangle = \frac{n}{n-1} Ric_{a\Lb} \tensor{Q}{^a_\Lb} =0.
\end{align*}
If $V$ satisfies assumption (2), $\langle D_{\Lb} \xi,\Lb \rangle$ also vanishes since $\xi$ is a Killing vector. Recall that $\xi$ is past-directed timelike. Hence $\langle \xi, \Lb \rangle > 0$ and by the Cauchy-Schwartz inequality, 
\begin{align}\label{evolution of f over mean curvature}
\frac{\pl}{\pl s} \int_\Sigma \dfrac{\langle \xi,\Lb \rangle}{\langle \vec{H},\Lb \rangle} d\mu &\le -\int_\Sigma \vphi \left[ \dfrac{\langle \xi,\Lb \rangle}{(\mxtr\chib)^2} |\chib|^2 + \langle \xi,\Lb \rangle \right] d\mu \leq -\frac{n}{n-1} \int_\Sigma \vphi \langle \xi,\Lb \rangle d\mu.
\end{align}

The evolution of the second term of $\mathcal{F}$ is given by
\begin{align*}
&\frac{\pl}{\pl s} \int_\Sigma Q(L,\Lb) d\mu\\
 =& \int_\Sigma \left[ \vphi \left( D_{\Lb} Q \right)(L,\Lb) + Q(D_{\pl_s} L,\Lb) + Q(L,D_{\pl_s} \Lb)  - \vphi Q(L,\Lb)\langle \vec{H},\Lb \rangle \right] d\mu. \notag
\end{align*}
From the conformal Killing-Yano equation (\ref{CKY equation}), we derive
\begin{align*}
\left( D_{\Lb} Q \right)(L,\Lb) = \frac{1}{n} \langle \xi,\Lb \rangle \langle L,\Lb \rangle = -\frac{2}{n} \langle \xi,\Lb \rangle.
\end{align*}
On the other hand, by standard computation
\begin{align*}
\langle D_{\pl_s} L,\Lb \rangle &= -\langle L, \vphi \underline{\omega} \Lb \rangle, \\
\langle D_{\pl_s} L ,\pl_a \rangle &= - \langle L, D_a (\vphi \Lb) \rangle= 2 \na_a \vphi - \vphi \langle L,D_a \Lb \rangle, 
\end{align*}
we have
\[D_{\pl_s} L = \lt( 2 \na^a \vphi - \vphi \langle L, D^a \Lb \rangle \rt) \pl_a -\vphi \omegab L. \]
The computations together yield
\begin{align*}
&Q(D_{\pl_s} L,\Lb) + Q(L,D_{\pl_s} \Lb)  - \vphi Q(L,\Lb)\langle \vec{H},\Lb \rangle\\  
 &= 2 \na^a \vphi Q(\pl_a,\Lb) + 2\vphi Q(\pl_a, (D_a \Lb)^\perp) + 2\vphi Q(\vec{H},\Lb) \\
&= 2 \na^a \left( \vphi Q(\pl_a,\Lb) \right) - \frac{2(n-1)}{n} \vphi \langle \xi,\Lb \rangle.
\end{align*}
In the last equality, we make use of (\ref{derivation}). Consequently, we obtain
\begin{align}
\frac{\pl}{\pl s} \int_\Sigma Q(L,\Lb) d\mu = -2 \int_\Sigma \vphi\langle \xi,\Lb \rangle d\mu. \label{evolution of X dot nu}
\end{align}
Then, the assertion follows from (\ref{evolution of f over mean curvature}) and (\ref{evolution of X dot nu}).
\end{proof}

\medskip
\subsection{A spacetime CMC condition}

\mbox{}\\

Hypersurfaces of constant mean curvature (CMC) provide models for soap bubbles, and have been studied extensively for a long time. A common generalization of this condition for higher codimensional submanifolds is the parallel mean curvature condition.  In general relativity, the most relevant physical phenomenon is the divergence of light rays emanating from a codimension-two submanifold. This is called the null expansion in physics literature. We thus impose constancy conditions on the null expansion of codimension-two submanifolds. More precisely, we are interested in the codimension-two submanifold that admits a future null normal vector field $L$ such that
\begin{enumerate}
\item $\langle \vec{H},L \rangle$ is a constant, and
\item $(DL)^\perp=0$ on $\Sigma$. 
\end{enumerate}

We review the definition of connection one-form of mean curvature gauge from \cite{Wang-Yau08}  and relate it to the condition introduced above. 
\begin{definition}\label{connection one-form}
 Let $\vec{H}$ denote the mean curvature vector of $\Sigma$. Let $\{e_n, e_{n+1}\}$ be an oriented orthonormal
 frame of the normal bundle such that $e_{n}$ is spacelike and $e_{n+1}$ is future timelike.  We define the normal vector field 
 $\vec{J}$ by reflecting $\vec{H}$ along the incoming light cone:
\begin{align*}
\vec{J} = \langle \vec{H},e_{n+1}\rangle e_n - \langle \vec{H},e_n\rangle e_{n+1} 
\end{align*} as in \cite{Wang-Yau08}.
The connection one-form $\zeta_{e_n}$ of the normal bundle with respect to $e_n$ and $e_{n+1}$ is defined by
\begin{equation}\label{torsion_orthonormal}
\zeta_{e_n}(V) = \langle D_V e_n, e_{n+1} \rangle \quad \mbox{ for any tangent vector } V \in T\Sigma.
\end{equation} Suppose the mean curvature vector is spacelike, we take $e_n^{\vec{H}} = - \frac{\vec{H}}{|\vec{H}|}$ and $e_{n+1}^{\vec{H}} = \frac{\vec{J}}{|\vec{H}|}$ and write $\alpha_{\vec{H}}$ for the connection one-form with respect to this mean curvature gauge: 
\[ \alpha_{\vec{H}}(V)=\langle D_V e_n^{\vec{H}}, e_{n+1}^{\vec{H}}\rangle. \] This is consistent with the notation in 
\cite{CWW}. We note that $\{e_n^{\vec{H}}, e_{n+1}^{\vec{H}}\}$ determines the same orientation as $\{e_n, e_{n+1}\}$.
\end{definition}
Recall the connection one-form with respect to  $L$ is given by 
\begin{equation}\label{torsion_null}\zeta_L(V)=\frac{1}{2}\langle D_V L, \underline{L} \rangle \quad \mbox{ for any tangent vector } V \in T\Sigma.\end{equation}

The two definitions in \eqref{torsion_orthonormal} and \eqref{torsion_null} give the same connection one-form if we choose $L=e_{n+1}+e_n$ and $\underline{L}=e_{n+1}-e_n$. 

\begin{proposition} \label{single equation}
Suppose the mean curvature vector field $\vec{H}$ of $\Sigma$ is spacelike.
\begin{enumerate}
\item If $\langle \vec{H},L \rangle =-c <0$ and $(DL)^\perp =0$ for some future outward null normal $L$ and some positive constant $c$, then $\alpha_{\vec{H}} = -d \log|\vec{H}|$.
\item If $\langle \vec{H},\Lb \rangle =c >0 $ and $(D\Lb)^\perp=0$ for some future inward null normal $\Lb$ and some positive constant $c$, then $\alpha_{\vec{H}} = d \log|\vec{H}|$.
\end{enumerate}
\end{proposition}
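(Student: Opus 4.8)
The plan is to set up the two null normals in terms of the orthonormal mean-curvature gauge $\{e_n^{\vec H}, e_{n+1}^{\vec H}\}$ and then translate the hypothesis $(DL)^\perp = 0$ (respectively $(D\Lb)^\perp = 0$) into a statement about the connection one-form. First I would write down the relation between the null frame $L, \Lb$ (with $\langle L, \Lb\rangle = -2$) and the gauge. Since $\vec H$ is spacelike, we have $\vec H = -|\vec H|\, e_n^{\vec H}$, so $\langle \vec H, L\rangle$ and $\langle \vec H, \Lb\rangle$ are nonzero, and the scaling freedom $L \to aL$, $\Lb \to a^{-1}\Lb$ lets me normalize. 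In case (2), from $\langle \vec H, \Lb\rangle = c > 0$ with $\Lb$ future inward, I would express $\Lb$ explicitly: writing $\Lb = \lambda(e_{n+1}^{\vec H} - e_n^{\vec H})$ for a positive function $\lambda$, the condition $\langle \vec H, \Lb\rangle = \lambda\langle -|\vec H| e_n^{\vec H}, e_{n+1}^{\vec H} - e_n^{\vec H}\rangle = \lambda |\vec H| = c$ forces $\lambda = c/|\vec H|$.

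Next I would compute the normal component $(D_V \Lb)^\perp$ for a tangent vector $V$. Differentiating $\Lb = \lambda(e_{n+1}^{\vec H} - e_n^{\vec H})$ and projecting onto the normal bundle, the tangential derivatives of $\lambda$ contribute a term proportional to $\Lb$ itself, while the connection one-form enters through $D_V e_n^{\vec H}$ and $D_V e_{n+1}^{\vec H}$. Using $\alpha_{\vec H}(V) = \langle D_V e_n^{\vec H}, e_{n+1}^{\vec H}\rangle$ and the fact that the frame is orthonormal, one finds that $(D_V \Lb)^\perp$ splits into a component along $\Lb$ (coming from $d\lambda$ and the antisymmetry of the connection) and a component along the complementary null direction $L$ whose coefficient is $\lambda\big(\alpha_{\vec H}(V) + d\log|\vec H|(V)\big)$, up to a sign bookkeeping that I would fix carefully. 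The hypothesis $(D\Lb)^\perp = 0$ kills \emph{both} components; the $L$-component vanishing gives exactly $\alpha_{\vec H} = d\log|\vec H|$, which is the claim. Case (1) is entirely parallel: write $L = \mu(e_{n+1}^{\vec H} + e_n^{\vec H})$, use $\langle \vec H, L\rangle = -c$ to get $\mu = c/|\vec H|$, and the vanishing of the $\Lb$-component of $(D L)^\perp$ yields $\alpha_{\vec H} = -d\log|\vec H|$ with the opposite sign arising from the $+e_n^{\vec H}$ versus $-e_n^{\vec H}$ in the outward null normal.

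The main obstacle I anticipate is the sign and normalization bookkeeping rather than any conceptual difficulty: one must keep straight the orientation conventions (future versus past, inward versus outward), the constraint $\langle L, \Lb\rangle = -2$, and the precise definition of $\alpha_{\vec H}$ against the reflected frame $\vec J = \langle \vec H, e_{n+1}\rangle e_n - \langle \vec H, e_n\rangle e_{n+1}$. The cleanest route is to note that the full normal connection is governed by the single scalar one-form $\alpha_{\vec H}$ (since the normal bundle has rank two), so $(D\Lb)^\perp = 0$ is really one scalar equation once the radial $d\lambda$ part is absorbed into the $\Lb$-direction; isolating the $L$-direction component of $(D_V \Lb)^\perp$ and setting it to zero is what produces the stated identity. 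I would verify the consistency of the two cases by checking that under $L \leftrightarrow \Lb$ (equivalently $e_n^{\vec H} \to -e_n^{\vec H}$) the one-form $\alpha_{\vec H}$ flips sign, matching the $\pm d\log|\vec H|$ in the two conclusions.
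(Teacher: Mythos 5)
Your proposal follows the same route as the paper's proof: write the null normal in the mean-curvature gauge with the scalar factor pinned down by the constancy hypothesis (giving $\lambda=c/|\vec H|$), differentiate, and extract a single scalar identity from the torsion-free condition. Indeed the paper takes $L=\frac{c}{|\vec H|}\bigl(e_n^{\vec H}+e_{n+1}^{\vec H}\bigr)$, $\Lb=\frac{|\vec H|}{c}\bigl(-e_n^{\vec H}+e_{n+1}^{\vec H}\bigr)$ and sets $\frac12\langle D_aL,\Lb\rangle=0$, which is exactly your computation in the other case.

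There is, however, one concrete misstep in the mechanism you describe, and if followed literally it would fail. Because $\Lb$ is null, $\langle D_V\Lb,\Lb\rangle=\frac12 V\langle\Lb,\Lb\rangle=0$, so in the null frame with $\langle L,\Lb\rangle=-2$ the normal projection is $(D_V\Lb)^\perp=-\frac12\langle D_V\Lb,L\rangle\,\Lb$: it has \emph{no} component along $L$ whatsoever, and ``isolating the $L$-direction component and setting it to zero'' yields only $0=0$. The entire content of $(D\Lb)^\perp=0$ is the vanishing of the single $\Lb$-coefficient $-\frac12\langle D_V\Lb,L\rangle$, into which \emph{both} the derivative of the scale factor, $d\lambda=-\lambda\,d\log|\vec H|$, and the connection one-form feed. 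Carrying out the pairing with $L=\frac1\lambda\bigl(e_{n+1}^{\vec H}+e_n^{\vec H}\bigr)$ one finds $\frac12\langle D_V\Lb,L\rangle=V(\log|\vec H|)-\alpha_{\vec H}(V)$, whose vanishing is precisely the claim $\alpha_{\vec H}=d\log|\vec H|$ (and the parallel computation for $L$ gives the opposite sign in case (1)). Once the roles of the two null directions are corrected, your argument is the paper's argument, and your closing observation that the rank-two normal bundle reduces everything to one scalar equation is the right way to see why a single condition such as $(D\Lb)^\perp=0$ determines $\alpha_{\vec H}$ completely.
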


\begin{proof}
Recall that the dual mean curvature vector $\vec{J}$ is future timelike. For (1), the condition $\langle \vec{H},L \rangle = -c <0$ is equivalent to 
\[ L = \frac{c}{|\vec{H}|} \left( -\frac{\vec{H}}{|\vec{H}|} + \frac{\vec{J}}{|\vec{H}|} \right). \]
Choose $\Lb = \frac{|\vec{H}|}{c} \left( \frac{\vec{H}}{|\vec{H}|} + \frac{\vec{J}}{|\vec{H}|} \right)$ such that $\langle L, \Lb\rangle=-2$.
Since $(D L)^\perp=0$, we have
\begin{align*}
0 &=\frac{1}{2}\langle D_a L,\Lb \rangle = \frac{1}{2} \pl_a \left( \frac{c}{|\vec{H}|} \right) \frac{|\vec{H}|}{c} (-2) + \left\langle D_a \left( -\frac{\vec{H}}{|\vec{H}|} \right), \frac{\vec{J}}{|\vec{H}|} \right\rangle \\
&= \pl_a \log |\vec{H}| + \left\langle D_a \left( - \frac{\vec{H}}{|\vec{H}|} \right), \frac{\vec{J}}{|\vec{H}|} \right\rangle.
\end{align*}
Hence $\alpha_{\vec{H}} = - d\log |\vec{H}|$. (2) can be proved in a similar way.
\end{proof}

\begin{remark}
We remark that when $\Sigma$ lies in a totally geodesic time slice of a static spacetime (see \cite[page 119]{Wald} for the definition),  the condition (1) and (2) reduces to the CMC condition.
\end{remark}

\medskip
\subsection{A Heintze-Karcher type inequality} $\\$
\par
In this and the next subsections, we focus our discussion on a class of spherically symmetric static spacetimes which includes the Schwarzschild spacetime. 
\begin{assumption}\label{spacetime_assumption} We assume $V$ is a spacetime that satisfies the null convergence condition \eqref{null_convergence} and the metric $\bar{g}$ on $V = \mathbb{R} \times M$ is of the form  
\begin{align}
\bar{g} = -f^2(r) dt^2 + \frac{1}{f^2(r)} dr^2 + r^2 g_{N}.
\end{align}
where $M=[r_1, r_2)\times N$ equipped with metric 
\begin{eqnarray}\label{metricg}
g=\frac{1}{f^2(r)} dr^2 + r^2 g_{N}
\end{eqnarray}
 and $(N, g_N)$ is a compact $n$-dimensional Riemannian manifold. We consider two cases: 
\begin{enumerate}
\item[(i)] $f:[0,\infty)\rw\mathbb{R}$ with $f(0)=1$, $f'(0)=0$, and $f({r})>0$ for $r\geq 0$.
\item[(ii)] $f: [r_0,\infty)\rw \mathbb{R}$ with $f(r_0)=0$ and $f(r)>0$ for $r>r_0.$
\end{enumerate}
\end{assumption}

In case (i), $(V,\bar{g})$ is complete. In case (ii), $V$ contains an event horizon $\mathcal{H} = \{r=r_0\}$. 
We note that the warped product manifolds considered in \cite{Brendle13} are embedded as totally geodesic slices in these spacetimes. 

\begin{remark} \label{remark_assumption}For a spacetime $V$ that satisfies Assumption \ref{spacetime_assumption}, a simple calculation shows that $Q=rdr\wedge dt$ is a conformal Killing-Yano two-form and $div_V Q=\xi= -n\frac{\partial}{\partial t}$
is a Killing field. 
\end{remark}

\begin{lemma}\label{null convergence condition implies H3'}
Let $(M, g)$ be a time slice in $V$. The null convergence condition of $(V,\bar{g})$ is equivalent to 
\begin{align}\label{static inequality}
(\Delta_g f) g - \mbox{Hess}_g f + f \mbox{Ric}(g) \geq 0 
\end{align}
on $M$.
\end{lemma}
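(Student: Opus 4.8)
The plan is to reduce the null convergence condition on $V$ to the stated tensor inequality on $(M,g)$ by first computing the Ricci curvature of the static metric $\bar g = -f^2\,dt^2 + g$ in terms of geometric data on the time slice, and then testing the resulting expression against all null directions.

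First I would record the Ricci identities for a static (time-independent, shift-free) metric $\bar g = -f^2 dt^2 + g$, where $f>0$ is a function on $M$ and $g$ is the slice metric \eqref{metricg}. Writing $\pl_t$ for the static Killing field and $X,Y$ for vectors tangent to $M$, these read
\begin{align*}
\overline{\mbox{Ric}}(\pl_t,\pl_t) &= f\,\Delta_g f, \\
\overline{\mbox{Ric}}(\pl_t,X) &= 0, \\
\overline{\mbox{Ric}}(X,Y) &= \mbox{Ric}(g)(X,Y) - \tfrac{1}{f}\,\mbox{Hess}_g f(X,Y),
\end{align*}
where $\overline{\mbox{Ric}}$ is the Ricci tensor of $\bar g$ and $\Delta_g f = \mbox{tr}_g \mbox{Hess}_g f$. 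This is the computational core of the argument and the step I expect to be the most laborious, though it is entirely routine: it follows by feeding the nonvanishing Christoffel symbols $\bar\Gamma^i_{00} = f\na^i f$, $\bar\Gamma^0_{0i} = \pl_i \log f$, and $\bar\Gamma^i_{jk} = \Gamma^i_{jk}(g)$ into the contracted curvature formula. As a consistency check, the vacuum case $\overline{\mbox{Ric}} = 0$ recovers the classical static vacuum equations $\Delta_g f = 0$ and $f\,\mbox{Ric}(g) = \mbox{Hess}_g f$, under which the tensor in \eqref{static inequality} vanishes identically, matching the fact that the Schwarzschild spacetime saturates the null convergence condition.

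Next I would parametrize the null directions. Every null vector of $\bar g$ has the form $L = a\,\pl_t + V$ with $V$ tangent to $M$, and the null condition $\bar g(L,L) = -f^2 a^2 + g(V,V) = 0$ is precisely $g(V,V) = f^2 a^2$. Since $\overline{\mbox{Ric}}(L,L)$ is homogeneous of degree two in $L$, I may rescale to $a=1$, so that $V$ ranges over the spatial vectors with $g(V,V) = f^2$. Using $\overline{\mbox{Ric}}(\pl_t,X)=0$ together with the formulas above and the substitution $f^2 = g(V,V)$ in the first term,
\begin{align*}
f\,\overline{\mbox{Ric}}(L,L) &= f^2\,\Delta_g f + f\,\mbox{Ric}(g)(V,V) - \mbox{Hess}_g f(V,V) \\
&= \big[(\Delta_g f)\,g - \mbox{Hess}_g f + f\,\mbox{Ric}(g)\big](V,V).
\end{align*}

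Finally I would conclude the equivalence. Since $f>0$, the identity $f\,\overline{\mbox{Ric}}(L,L) = T(V,V)$, with $T := (\Delta_g f)g - \mbox{Hess}_g f + f\,\mbox{Ric}(g)$ the tensor in \eqref{static inequality}, shows that the null convergence condition \eqref{null_convergence} is equivalent to $T(V,V)\ge 0$ for every spatial $V$ with $|V|_g = f$. Because $T(V,V)$ is quadratic in $V$, any nonzero spatial $W$ may be rescaled to length $f$ without altering the sign of $T(W,W)$, so the condition is equivalent to $T \ge 0$ as a symmetric two-tensor on all of $M$, which is \eqref{static inequality}. Apart from the Ricci computation in the first step, every step is immediate.
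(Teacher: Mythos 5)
Your proposal is correct and follows essentially the same route as the paper: compute the Ricci tensor of the static metric in terms of $\mbox{Ric}(g)$, $\mbox{Hess}_g f$, and $\Delta_g f$ (the paper quotes this as O'Neill's formula, where you propose a direct Christoffel computation yielding the identical identities), then test against null vectors $L$ proportional to $\frac{1}{f}\frac{\pl}{\pl t}+v$ with $v$ unit tangent to $M$ and use homogeneity to pass to the full tensor inequality. No gaps.
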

\begin{proof} 
O'Neill's formula in our case reduces to (see \cite[Proposition 2.7]{Co})
\begin{align*}
\mbox{Ric}(\bar{g})(v,w) &= \mbox{Ric}(g)(v,w) - \frac{\mbox{Hess}_g f(v,w)}{f},\\
\mbox{Ric}(\bar{g}) \lt( v, \frac{\pl}{\pl t} \rt) &=0, \\
\mbox{Ric}(\bar{g}) \lt( \frac{\pl}{\pl t}, \frac{\pl}{\pl t} \rt) &= - \frac{\Delta_g f}{f} \bar{g}(\frac{\pl}{\pl t}, \frac{\pl}{\pl t}),
\end{align*}
for any tangent vectors $v$ and $w$ on $M$. 
\par
Note that $\bar{g} \lt( \frac{\pl}{\pl t}, \frac{\pl}{\pl t}\rt) = -f^2$. Hence, a unit tangent vector $v$ on $M$ gives rise to a null vector $L= \frac{1}{f} \frac{\pl}{\pl t} + v$ in the spacetime and any null vector along $M$ is a multiple of $\frac{1}{f} \frac{\pl}{\pl t} + v $ for some unit tangent vector $v$ on $M$. As a result, 
\begin{align*}
\mbox{Ric}(\bar{g})(L,L) \quad and \quad \frac{\Delta_g f}{f} + \mbox{Ric}(g)(v,v) -\frac{\mbox{Hess}_g f(v,v)}{f} 
\end{align*}
  have the same sign. Therefore, the null convergence condition is equivalent to (\ref{static inequality}). 
\end{proof}

\medskip
 In \cite{Brendle13}, Brendle proved a Heintze-Karcher-type inequality for mean convex hypersurfaces in $(M, g)$. In our context, it is as in the following theorem.
\begin{theorem}\cite{Brendle13}
Let $S$ be a smooth, closed, embedded, orientable hypersurface in a time slice of a spacetime $V$ that satisfies Assumption \ref{spacetime_assumption}. Suppose that $S$ has positive mean curvature $H>0$ in the slice. Then
\begin{align}\label{Brendle's Heintze-Karcher inequality}
(n-1) \int_S \frac{f}{H} d\mu \geq \int_S \langle X,\nu \rangle d\mu,
\end{align} where $\nu$ is the outward unit normal of $S$ in the slice and $X=r f \frac{\pl}{\pl r}$ is the conformal Killing vector field on the slice.
Moreover, equality holds if and only if $S$ is umbilical.
\end{theorem}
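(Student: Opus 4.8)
The plan is to follow the weighted Heintze--Karcher argument, feeding the curvature hypothesis in through the static inequality of Lemma~\ref{null convergence condition implies H3'}. First I would rewrite the right-hand side as a weighted volume. Since $X=rf\,\pl_r$ is conformal Killing on the slice with $D_i X_j = f\,g_{ij}$ (indeed it is the gradient of the radial potential $\Phi$ with $\na\Phi=X$, so that $\operatorname{Hess}_g\Phi = f g$), one has $\operatorname{div}_g X = nf$. Letting $\Omega$ denote the region bounded by $S$, the divergence theorem gives $\int_S \langle X,\nu\rangle\,d\mu = n\int_\Omega f\,dV$, so the claimed inequality is equivalent to the weighted Heintze--Karcher estimate
\[
\int_\Omega f\,dV \;\le\; \frac{n-1}{n}\int_S \frac{f}{H}\,d\mu .
\]
When $f\equiv 1$ this is the classical Heintze--Karcher inequality, and the constant $\tfrac{n-1}{n}$ is the expected sharp one.

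Next I would set up the inward normal exponential flow. For $x\in S$ let $\gamma_x$ be the unit-speed geodesic with $\gamma_x(0)=x$ and $\gamma_x'(0)=-\nu(x)$, and put $\Phi(x,t)=\gamma_x(t)$. Because $S$ is closed, embedded, and bounds $\Omega$, every interior point of $\Omega$ is joined to $S$ by a minimizing normal geodesic, so $\Phi$ maps $\{(x,t):0\le t\le\tau(x)\}$ onto $\Omega$, where $\tau(x)$ is the cut time. Writing $dV=\det A(x,t)\,dt\,d\mu(x)$, where $A$ solves the Jacobi equation $A''+R_{\gamma'}A=0$ (with $R_{\gamma'}Y=R(Y,\gamma')\gamma'$) and $A(0)=\mathrm{Id}$, $A'(0)=-W(0)$ for $W$ the shape operator of $S$, and using $f>0$ together with surjectivity of $\Phi$, I obtain
\[
\int_\Omega f\,dV \;\le\; \int_S \Big(\int_0^{\tau(x)} f(\gamma_x(t))\,\det A(x,t)\,dt\Big)\,d\mu(x).
\]

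The heart of the proof is then a pointwise one-variable comparison establishing $\int_0^{\tau(x)} f(\gamma_x(t))\,\det A(x,t)\,dt\le \tfrac{n-1}{n}\,\tfrac{f(x)}{H(x)}$ for each $x$. Writing $h(t)=-(\det A)'/\det A$ for the mean curvature of the parallel hypersurface through $\gamma_x(t)$ (with sign conventions fixed so that $h(0)=H(x)>0$), the Riccati equation gives $h'=|W|^2+\operatorname{Ric}(\gamma',\gamma')\ge \frac{h^2}{n-1}+\operatorname{Ric}(\gamma',\gamma')$, while contracting the static inequality of Lemma~\ref{null convergence condition implies H3'} with the unit vector $\gamma'$ yields $f\operatorname{Ric}(\gamma',\gamma')\ge \operatorname{Hess}_g f(\gamma',\gamma')-\Delta_g f$, where $\operatorname{Hess}_g f(\gamma',\gamma')=(f\circ\gamma)''$. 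Combining these with the radial structure of $f$ produces a differential inequality for the weighted density $f(\gamma(t))\det A(t)$ which I would compare against the rotationally symmetric model in which equality holds; integrating from $0$ to $\tau(x)$ gives the pointwise bound, and summing over $S$ yields the inequality.

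For the equality statement I would trace the inequalities backwards: equality forces the Cauchy--Schwarz step $|W|^2=\frac{h^2}{n-1}$ to hold along every normal geodesic, hence $W$ is a multiple of the identity at $t=0$, i.e.\ $S$ is umbilical; conversely an umbilical $S$ realizes equality by the model computation. The main obstacle is precisely this pointwise comparison: organizing the tensorial static condition---which carries the full Laplacian $\Delta_g f$, not merely its radial part---together with the Jacobi and Riccati equations into a single monotone quantity with the sharp constant, and controlling the cut locus so that $\Phi$ genuinely covers $\Omega$ with $\det A\ge 0$ on the relevant range.
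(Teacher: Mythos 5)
Your reduction via the divergence theorem (using $\mathrm{div}_g X = nf$, with the horizon boundary term vanishing because $X=rf\frac{\pl}{\pl r}=0$ where $f(r_0)=0$) and your setup of the inward normal exponential map are sound and match the general strategy. But the central step of your argument --- the pointwise comparison $\int_0^{\tau(x)} f(\gamma_x(t))\det A(x,t)\,dt \le \frac{n-1}{n}\frac{f(x)}{H(x)}$ for \emph{each} $x\in S$ --- is not merely the ``main obstacle''; it is false in general, so no amount of reorganizing the Riccati and Jacobi equations along a single geodesic will produce it. A concrete counterexample already occurs in one of the model cases: in hyperbolic space ($f=\sqrt{1+r^2}=\cosh\bar r$), take $S$ a small geodesic sphere of radius $\rho$ whose center is far from the origin $O$, and let $x$ be the point of $S$ closest to $O$. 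Along the inward normal geodesic from $x$ one has $f(\gamma_x(t))\approx f(x)e^{t}$, $\det A = (\sinh(\rho-t)/\sinh\rho)^{n-1}$, $\tau=\rho$, and a Taylor expansion shows the left side exceeds $\frac{n-1}{n}\frac{f(x)}{H(x)}=\frac{f(x)\tanh\rho}{n}$ at order $\rho^2$. The inequality only holds after summing over all of $S$ (the antipodal point overcompensates). The structural reason is the one you yourself flag: contracting the static inequality of Lemma \ref{null convergence condition implies H3'} with $\gamma'\otimes\gamma'$ leaves the term $\Delta_g f - \mathrm{Hess}_g f(\gamma',\gamma') = \sum_{i}\mathrm{Hess}_g f(e_i,e_i)$, i.e.\ the tangential Hessian of $f$ along the parallel hypersurfaces; this is not a function of data along one geodesic, and it only becomes harmless after integration over the whole level set $\Sigma_t^*$, where the tangential Laplacian integrates to zero and the cut locus contributes with a favorable sign.

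This is exactly why Brendle's proof --- which is what the paper actually invokes --- replaces the pointwise comparison by a monotonicity formula for the \emph{integrated} quantity $Q(t) = (n-1)\int_{\Sigma_t^*}\frac{f}{H}\,d\mu - n\int_{\Omega_t^*} f\,dV$ over the portion $\Sigma_t^*$ of the distance-$t$ level set reached before the cut locus (his Proposition 3.2); the equality analysis, forcing umbilicity, is likewise run through $Q'(t)\equiv 0$. Note also that the paper does not reprove the theorem at all: its ``proof'' consists of translating the metric \eqref{metricg} into Brendle's geodesic coordinates $r=h$, $f=dh/d\bar r$, checking that assumptions (i)--(ii) correspond to (H1)--(H2), that the static inequality \eqref{static inequality} is the only curvature input to Proposition 3.2, and that the two homology cases $\pl\Omega=S$ or $\pl\Omega = S-\{r_0\}\times N$ both reduce to \eqref{Brendle's Heintze-Karcher inequality} by the divergence theorem. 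To make your argument correct you would need to import the integrated monotonicity formula in place of the pointwise bound.
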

\begin{proof}
We first remark that since $S$ is embedded and orientable, $S$ is either null-homologous or homologous to $\{r_0\} \times N$. Hence $\pl \Omega = S$ or $\pl \Omega=S - \{r_0\} \times N$ for some domain $\Omega \subset M$. Inequality \eqref{Brendle's Heintze-Karcher inequality} is equivalent to the one in Theorem 3.5 and the one in  Theorem 3.11 of Brendle's paper in the respective cases. For the reader's convenience, we trace Brendle's argument leading to (\ref{Brendle's Heintze-Karcher inequality}). 

The assumptions on $(M,g)$ are listed in page 248 \cite{Brendle13}:
\begin{align}\label{assumption on RicN}
\mbox{Ric}_N \geq (n-2)\rho g_N
\end{align} 
and (H1)-(H3) (note that condition (H4) is not used in the proof of (\ref{Brendle's Heintze-Karcher inequality})). While Brendle writes the metric in geodesic coordinates
\begin{align*}
d\bar{r} \otimes d\bar{r} + h^2(\bar{r})g_N,
\end{align*}
it is equivalent to (\ref{metricg}) by a change of variables $r=h$ and $f=\frac{dh}{d\bar{r}}$. Moreover, as explained in the beginning of section 2 (page 252 of \cite{Brendle13}), (H1) and (H2) are equivalent to our assumptions (i) and (ii) on $f$. In Proposition 2.1, (\ref{assumption on RicN}) and (H3) together imply that (\ref{static inequality}) holds on $(M,g)$. 

The condition \eqref{static inequality} turns out to be the only curvature assumption that is necessary in proving (\ref{Brendle's Heintze-Karcher inequality}). More precisely, (\ref{static inequality}) is used to prove the key monotonicity formula, Proposition 3.2 (page 256). Inequality (\ref{Brendle's Heintze-Karcher inequality}) is a direct consequence of Proposition 3.2 up to several technical lemmata, Lemma 3.6 to Corollary 3.10, in which only assumptions (H1) and (H2) are used.

Finally, the inequalities appeared in Theorem 3.5 and Theorem 3.11 in \cite{Brendle13} are equivalent to (\ref{Brendle's Heintze-Karcher inequality}) by the divergence theorem.   
\end{proof}

\medskip
Before stating the spacetime Heintze-Karcher inequality, we define the notions of future incoming null embeddedness and shear-free null hypersurface.
\begin{definition}\label{future incoming null embedded}
A closed embedded spacelike codimension-two submanifold $\Sigma$ in a static spacetime $V$ is {\it future (past, respectively) incoming null embedded} if the future (past, respectively) incoming null hypersurface of $\Sigma$ intersects a totally geodesic time-slice $\mathcal{M}_T=\{ t=T\}\subset V$ at a smooth, embedded, orientable hypersurface $S$. 
\end{definition}

\begin{definition}\label{shearfree}
An incoming null hypersurface $\mathcal{\underline{C}}$ is {\it shear-free} if there exists a spacelike hypersurface $\Sigma$ in  $\mathcal{\underline{C}}$ such that the null second fundamental form $\chib_{ab} = \langle D_a \Lb, \pl_b \rangle$ of $\Sigma$ with respect to some null normal $\Lb$ satisfies $\chib_{ab} = \psi \sigma_{ab}$ for some function $\psi$. A shear-free outgoing null hypersurface is defined in the same way.
\end{definition}
Note that being shear-free is a property of the null hypersurface, see \cite[page 47-48]{Sauter08}.
The spacetime Heintze-Karcher inequality we prove is the following: 
\begin{theorem}\label{Spacetime Heintze-Karcher inequality}
Let $V$ be a spacetime as in Assumption \ref{spacetime_assumption}. Let $\Sigma \subset V$ be a future incoming null embedded closed spacelike codimension-two submanifold with  $\langle \vec{H},\Lb \rangle >0$ where $\Lb$ is a future incoming null normal.   Then
\begin{align}\label{Heintze-Karcher inequality}
-(n-1) \int_\Sigma \dfrac{\langle \frac{\pl}{\pl t},\Lb \rangle}{\langle \vec{H},\Lb \rangle} d\mu - \frac{1}{2} \int_\Sigma Q(L,\Lb) d\mu \geq 0,
\end{align} for a future outgoing null normal $L$ with $\langle L, \Lb\rangle=-2$  and $Q=rdr\wedge dt$ is the conformal Killing-Yano two-form on $V$. Moreover, the equality holds if and only if $\Sigma$ lies in an incoming shear-free null hypersurface.
\end{theorem}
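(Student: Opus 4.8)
The plan is to recognize the left-hand side of \eqref{Heintze-Karcher inequality} as the functional $\mathcal{F}(\Sigma,[\Lb])$ of \eqref{F_functional}, and then use the monotonicity of Theorem \ref{monotonicity} to slide $\Sigma$ along its incoming null hypersurface down to a hypersurface $S$ lying in a totally geodesic time slice, where the inequality becomes Brendle's Heintze--Karcher inequality \eqref{Brendle's Heintze-Karcher inequality}. By Remark \ref{remark_assumption}, the spacetime $V$ of Assumption \ref{spacetime_assumption} carries $Q = r\,dr\wedge dt$ with $\xi = div_V Q = -n\frac{\pl}{\pl t}$ Killing, and $V$ satisfies the null convergence condition, so assumption (2) of Theorem \ref{monotonicity} is in force. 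Substituting $\xi = -n\frac{\pl}{\pl t}$ into \eqref{F_functional} gives $\frac{n-1}{n}\langle\xi,\Lb\rangle = -(n-1)\langle\frac{\pl}{\pl t},\Lb\rangle$, so $\mathcal{F}(\Sigma,[\Lb])$ is exactly the left-hand side of \eqref{Heintze-Karcher inequality}; since $\frac{\pl}{\pl t}$ and $\Lb$ are both future-directed causal, $\langle\frac{\pl}{\pl t},\Lb\rangle<0$ and each term is manifestly positive, consistent with the claim.

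Next I would run the flow \eqref{flow} along the future incoming null hypersurface $\underline{C}_0$ of $\Sigma$. By Definition \ref{future incoming null embedded}, $\underline{C}_0$ meets $\mathcal{M}_T=\{t=T\}$ in a smooth, embedded, orientable hypersurface $S$, and the portion of $\underline{C}_0$ between $\Sigma$ and $S$ is smooth and free of caustics. I would extend $\Lb$ to be geodesic along the generators (so $\omegab=0$) and choose the speed $\vphi$ so that the flow surfaces foliate $\underline{C}_0$ by time slices, terminating at $S$. With $\omegab=0$, the Raychaudhuri computation \eqref{positive_null_expansion} gives $\frac{\pl}{\pl s}\langle\vec{H},\Lb\rangle\ge\vphi|\chib|^2\ge 0$, so the positivity $\langle\vec{H},\Lb\rangle>0$ persists and $\mathcal{F}$ stays well-defined along the flow; Theorem \ref{monotonicity} then yields $\mathcal{F}(\Sigma,[\Lb])\ge\mathcal{F}(S,[\Lb])$.

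It then remains to evaluate $\mathcal{F}(S,[\Lb])$ on the slice and match it to Brendle's inequality, using that $\mathcal{F}$ is invariant under the rescaling $L\to aL$, $\Lb\to\frac1a\Lb$. Writing $e_{n+1}=\frac1f\frac{\pl}{\pl t}$ for the future unit timelike normal of the slice and $\nu$ for the outward unit normal of $S$ inside the slice, one has $\Lb=e_{n+1}-\nu$, $L=e_{n+1}+\nu$ with $\langle L,\Lb\rangle=-2$. A direct computation gives $\langle\frac{\pl}{\pl t},\Lb\rangle=-f$; since the slice is totally geodesic the mean curvature vector has no timelike component, so $\langle\vec{H},\Lb\rangle=H$, the mean curvature of $S$ in the slice; and evaluating $Q=r\,dr\wedge dt$ gives $Q(L,\Lb)=\frac{2r}{f}\,dr(\nu)=2\langle X,\nu\rangle$, where $X=rf\frac{\pl}{\pl r}$. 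Hence $\mathcal{F}(S,[\Lb])=(n-1)\int_S\frac{f}{H}\,d\mu-\int_S\langle X,\nu\rangle\,d\mu$, which is nonnegative by \eqref{Brendle's Heintze-Karcher inequality}; combining with the monotonicity proves \eqref{Heintze-Karcher inequality}.

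For equality, I would trace back both inequalities. Equality forces $\mathcal{F}$ constant along the whole flow together with equality in \eqref{Brendle's Heintze-Karcher inequality}, i.e. $S$ umbilical. Constancy of $\mathcal{F}$ forces the Cauchy--Schwarz step in \eqref{evolution of f over mean curvature} to be an equality at every cross section, which happens precisely when the null second fundamental form is pure trace, $\chib_{ab}=\psi\,\sigma_{ab}$; by Definition \ref{shearfree} this says $\underline{C}_0$ is shear-free, so $\Sigma$ lies in an incoming shear-free null hypersurface. Conversely, if $\Sigma$ lies in such a hypersurface then every cross section, in particular $S$, is umbilical, all inequalities saturate, and equality holds. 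The main obstacle I anticipate is the analytic control of the null flow: verifying that it reaches the prescribed slice with no prior breakdown of $\langle\vec{H},\Lb\rangle>0$ or caustic formation, and that the pointwise shear-free condition extracted from the equality case genuinely upgrades to the statement that $\underline{C}_0$ is a shear-free null hypersurface in the sense of Definition \ref{shearfree}.
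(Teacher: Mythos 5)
Your proposal is correct and follows essentially the same route as the paper: identify the left-hand side with $\mathcal{F}(\Sigma,[\Lb])$, invoke Remark \ref{remark_assumption} so that Theorem \ref{monotonicity} applies, flow along $\underline{C}_0$ to the slice hypersurface $S$, and evaluate $\mathcal{F}(S,[\Lb])$ via the frame $L=\frac{1}{f}\frac{\pl}{\pl t}+e_n$, $\Lb=\frac{1}{f}\frac{\pl}{\pl t}-e_n$ to reduce to Brendle's inequality \eqref{Brendle's Heintze-Karcher inequality}. The only cosmetic difference is your normalization $\omegab=0$ where the paper arranges $\omegab\ge 0$; both keep $\langle\vec{H},\Lb\rangle>0$ along the flow, and your equality analysis matches the paper's conclusion that $S$ umbilical makes $\underline{C}_0$ shear-free.
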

\begin{proof}
We arrange $\vphi$ in (\ref{flow}) such that $\underline{\omega} \geq 0$ and that $F(\Sigma,1)=S$, the smooth hypersurface defined in Definition \ref{future incoming null embedded}. We first claim that $S \subset \mathcal{M}_T$ has positive mean curvature, $H>0$. Recall that Raychaudhuri equation implies 
\begin{equation}
\begin{split}
\frac{\pl}{\pl s} \langle \vec{H},\Lb \rangle = \vphi \left( |\chib|^2 	+\omegab \langle \vec{H}, \Lb \rangle + Ric(\Lb,\Lb) \right)\geq \vphi \left( |\chib|^2 +\omegab \langle \vec{H}, \Lb \rangle \right),
\end{split}
\end{equation}
and hence $\langle \vec{H}, \Lb \rangle >0$ on $S$. We choose $\Lb = \frac{1}{f} \frac{\pl}{\pl t} - e_n$ on $S$, where $e_n$ is the outward unit normal of $S$ with respect to $\Omega$, and compute 
\begin{align*}
\left\langle \vec{H}, \frac{1}{f}\frac{\pl}{\pl t} - e_n \right\rangle = H.
\end{align*}
\par

The claim follows since the positivity of $\langle \vec{H}, \Lb \rangle$ is independent of the scaling of $\Lb$.
Next we choose $L = \frac{1}{f} \frac{\pl}{\pl t} + e_n$ on $S$ and compute
\begin{align*}
-\left\langle \frac{\pl}{\pl t}, \frac{1}{f}\frac{\pl}{\pl t} - e_n \right\rangle &= f, \\
Q \left(\frac{1}{f}\frac{\pl}{\pl t} + e_n, \frac{1}{f}\frac{\pl}{\pl t} - e_n \right) &= 2\langle X,e_n \rangle.
\end{align*}

Remark \ref{remark_assumption} implies the monotonicity formula (Theorem \ref{monotonicity}) holds with $\xi=-n\frac{\partial}{\partial t}$
and thus  
\begin{align}\label{spacetime Heintze-Karcher inequality: after monotonicity}
&-(n-1) \int_\Sigma \dfrac{\langle \frac{\pl}{\pl t},\Lb \rangle}{\langle \vec{H},\Lb \rangle} d\mu - \frac{1}{2} \int_\Sigma Q(L,\Lb) d\mu\\
&\geq  -(n-1) \int_{F(\Sigma,1)} \dfrac{\langle \frac{\pl}{\pl t},\Lb \rangle}{\langle \vec{H},\Lb \rangle} d\mu - \frac{1}{2} \int_{F(\Sigma,1)} Q(L,\Lb) d\mu. \notag\end{align}

As $F(\Sigma, 1)=S$, the above calculation on $S$ shows the last expression is equal to 
\[ (n-1) \int_S \frac{f}{H} d\mu - \int_S \langle X,e_n \rangle d\mu\geq 0 \] by  \eqref{Brendle's Heintze-Karcher inequality}. Moreover, $S$ is umbilical if the equality holds. Hence the future incoming null hypersurface generated from $\Sigma$ is shear-free.  
\end{proof}

\medskip
By reversing the time orientation, we also obtain the Heintze-Karcher inequality for past incoming null smooth submanifolds.
\begin{theorem}\label{Spacetime Heintze-Karcher inequality: past incoming}
Let $V$ be a spacetime as in Assumption \ref{spacetime_assumption}. Let $\Sigma \subset V$ be a past incoming null embedded, closed spacelike codimension-two submanifold such that $\langle \vec{H},  L\rangle < 0$ with respect to some future outgoing null normal $ L$.   Then
\begin{align}\label{Heintze-Karcher inequality1}
(n-1) \int_\Sigma \dfrac{\langle \frac{\partial}{\partial t},  L \rangle}{\langle \vec{H}, L\rangle} d\mu - \frac{1}{2} \int_\Sigma Q(L,\Lb) d\mu \geq 0,
\end{align} for a future incoming null normal $\udl L$ with $\langle L, \Lb\rangle=-2$ and $Q=rdr\wedge dt$ is the conformal Killing-Yano two-form on $V$.
Moreover, the equality holds if and only if $\Sigma$ lies in an outgoing shear-free null hypersurface.
\end{theorem}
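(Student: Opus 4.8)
The plan is to deduce this statement from Theorem \ref{Spacetime Heintze-Karcher inequality} by applying the time-reflection isometry of $V$, as the phrase ``reversing the time orientation'' suggests. Since the metric $\bar g = -f^2(r)\,dt^2 + f^{-2}(r)\,dr^2 + r^2 g_N$ depends on $t$ only through $dt^2$, the map $\Phi:(t,x)\mapsto(-t,x)$ is an isometry of $(V,\bar g)$. It reverses the time orientation, so $\Phi_*\frac{\pl}{\pl t} = -\frac{\pl}{\pl t}$, while it fixes the radial direction and hence preserves the incoming/outgoing character of null geodesics. Because $Q = r\,dr\wedge dt$, the pullback satisfies $\Phi^* Q = -Q$; this single sign is what ultimately converts the future-incoming inequality into the past-incoming one.

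First I would set $\tilde\Sigma = \Phi(\Sigma)$ and check that it satisfies the hypotheses of Theorem \ref{Spacetime Heintze-Karcher inequality}. Writing $L = e_{n+1} + e_n$ and $\Lb = e_{n+1} - e_n$ with $e_{n+1}$ future timelike and $e_n$ the outward spacelike unit normal, the relations $\Phi_* e_{n+1} = -e_{n+1}$ and $\Phi_* e_n = e_n$ (identifying frames at $p$ and $\Phi(p)$) give $\Phi_* L = -\tilde{\Lb}$ and $\Phi_*\Lb = -\tilde L$, where $\tilde L, \tilde{\Lb}$ are the future outgoing/incoming null normals of $\tilde\Sigma$. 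Since $\Phi$ carries the past incoming null hypersurface of $\Sigma$ to the future incoming null hypersurface of $\tilde\Sigma$, the latter is future incoming null embedded, and isometry invariance of the inner product turns $\langle \vec H, L\rangle < 0$ into $\langle \vec{\tilde H}, \tilde{\Lb}\rangle = -\langle \vec H, L\rangle > 0$. Thus Theorem \ref{Spacetime Heintze-Karcher inequality} applies to $\tilde\Sigma$. I would then transport the resulting inequality back through $\Phi$: using $\frac{\pl}{\pl t}|_{\Phi(p)} = -\Phi_*\frac{\pl}{\pl t}$ one finds the first integrand changes sign (the numerator is invariant but the denominator flips), so $-(n-1)\int_{\tilde\Sigma}$ becomes $+(n-1)\int_\Sigma$; and from $\Phi^* Q = -Q$ together with the two sign flips in $\Phi_* L, \Phi_*\Lb$ one gets $Q(\tilde L, \tilde{\Lb}) = Q(L,\Lb)$, so the second term is unchanged. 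Since $\Phi$ is an isometry it preserves $d\mu$, and \eqref{Heintze-Karcher inequality1} follows.

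The equality case requires the most care and is where I expect the only real subtlety. Theorem \ref{Spacetime Heintze-Karcher inequality} gives equality iff $\tilde\Sigma$ lies in an incoming shear-free null hypersurface; pulling back by $\Phi$, this says the past incoming null hypersurface of $\Sigma$ is shear-free. The apparent clash with the phrase ``outgoing shear-free'' in the statement is resolved by observing that the past incoming and future outgoing null hypersurfaces of $\Sigma$ are the two halves of a single null hypersurface ruled by the $L$-null line, and that being shear-free is a property of the whole null hypersurface (see \cite{Sauter08}); hence the past incoming hypersurface is shear-free exactly when the outgoing one is. The main bookkeeping obstacle throughout is keeping the four labels future/past and incoming/outgoing straight under $\Phi$ and reconciling them with the $e_n, e_{n+1}$ frame conventions; once the identities $\Phi_* L = -\tilde{\Lb}$, $\Phi_*\Lb = -\tilde L$, and $\Phi^* Q = -Q$ are fixed, the rest is routine sign tracking. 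As an alternative, one could mirror the proof of Theorem \ref{Spacetime Heintze-Karcher inequality} directly, flowing $\Sigma$ along its past incoming null hypersurface and re-running the monotonicity computation of Theorem \ref{monotonicity} with the reversed orientation, but the isometry argument avoids repeating that analysis.
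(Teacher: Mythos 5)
Your proposal is correct and takes essentially the same approach as the paper: the paper's proof is precisely the time-reversal substitution $\frac{\pl}{\pl t}\to-\frac{\pl}{\pl t}$, $Q\to -Q$, $\Lb\to -L$, $L\to -\Lb$ plugged into \eqref{Heintze-Karcher inequality}, which your reflection isometry $\Phi$ implements explicitly. Your sign bookkeeping and your resolution of the incoming/outgoing labelling in the equality case (shear-freeness being a property of the whole null hypersurface ruled by the $L$-line) both check out.
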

\begin{proof}
When we reverse the time orientation, we replace $\frac{\pl}{\pl t}$ by $-\frac{\pl}{\pl t}$, $Q$ by $-Q$, $\Lb$ by $-L$, and $L$ by $-\Lb$. Plug these into (\ref{Heintze-Karcher inequality}) and we obtain (\ref{Heintze-Karcher inequality1}).
\end{proof}
\medskip
\subsection{A Spacetime Alexandrov Theorem}

\mbox{}\\

Together with the spacetime Heintze-Karcher inequality proved in the previous subsection, the Minkowski formula (\ref{k=1 Minkowski formula}) implies the following spacetime Alexandrov type theorem.

\begin{theorem}
Let $V$ be a spherically symmetric spacetime as in Assumption \ref{spacetime_assumption} and $\Sigma$ be a future incoming null embedded, closed embedded, spacelike codimension-two submanifold in $V$. Suppose there is a future incoming null normal vector field $\underline{L}$ along $\Sigma$ such that $\langle \vec{H}, \underline{L}\rangle$ is a positive constant and $(D\underline{L})^\perp=0$ on $\Sigma$.Then $\Sigma$ lies in a
shear-free null hypersurface.\end{theorem}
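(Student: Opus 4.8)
The plan is to combine the spacetime Minkowski formula (\ref{k=1 Minkowski formula}) with the rigidity (equality) case of the spacetime Heintze-Karcher inequality, Theorem \ref{Spacetime Heintze-Karcher inequality}. The key observation is that the two hypotheses, namely $\langle \vec{H}, \Lb\rangle \equiv c$ for a positive constant $c$ and $(D\Lb)^\perp = 0$, force the Minkowski identity to reduce to an equation asserting precisely that the left-hand side of (\ref{Heintze-Karcher inequality}) vanishes. Since that left-hand side is nonnegative with equality characterizing incoming shear-free null hypersurfaces, the conclusion follows at once.

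First I would apply Theorem \ref{first Minkowski identity} to $\Sigma$ with the given null normal $\Lb$. The torsion-free condition $(D\Lb)^\perp = 0$ kills the third term $\int_\Sigma Q(\pl_a, (D^a \Lb)^\perp)\,d\mu$. By Remark \ref{remark_assumption}, on a spacetime satisfying Assumption \ref{spacetime_assumption} we have $\xi = div_V Q = -n\frac{\pl}{\pl t}$, so the first term becomes $-(n-1)\int_\Sigma \langle \frac{\pl}{\pl t}, \Lb\rangle\,d\mu$. For the middle term, I would decompose $\vec{H}$ in the null frame $\{L,\Lb\}$ normalized by $\langle L,\Lb\rangle = -2$, writing $\vec{H} = -\tfrac12 \langle \vec{H},\Lb\rangle L - \tfrac12 \langle \vec{H}, L\rangle \Lb$; since $Q$ is antisymmetric, $Q(\Lb,\Lb)=0$, and hence $Q(\vec{H},\Lb) = -\tfrac12\langle \vec{H},\Lb\rangle Q(L,\Lb)$. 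Substituting these, the Minkowski formula collapses to exactly the torsion-free identity (\ref{zero_order}),
\[
-(n-1)\int_\Sigma \langle \tfrac{\pl}{\pl t},\Lb\rangle\,d\mu - \frac{c}{2}\int_\Sigma Q(L,\Lb)\,d\mu = 0,
\]
where I have used $\langle \vec{H},\Lb\rangle \equiv c$ to pull the constant out of the integral.

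Next, dividing through by $c>0$ and rewriting $c = \langle \vec{H},\Lb\rangle$ inside the first integral, the identity becomes
\[
-(n-1)\int_\Sigma \frac{\langle \frac{\pl}{\pl t},\Lb\rangle}{\langle \vec{H},\Lb\rangle}\,d\mu - \frac{1}{2}\int_\Sigma Q(L,\Lb)\,d\mu = 0.
\]
This is precisely the quantity appearing on the left of (\ref{Heintze-Karcher inequality}). Since $\Sigma$ is future incoming null embedded and $\langle \vec{H},\Lb\rangle > 0$, Theorem \ref{Spacetime Heintze-Karcher inequality} applies and asserts that this quantity is $\geq 0$, with equality if and only if $\Sigma$ lies in an incoming shear-free null hypersurface. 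As the Minkowski formula forces it to equal zero, the equality case holds, and $\Sigma$ lies in a shear-free null hypersurface, as desired.

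I do not expect a genuine obstacle in the argument itself: once the Heintze-Karcher inequality and its rigidity statement are in hand, the proof is a clean matching of two integral expressions. The only points requiring care are bookkeeping ones, namely tracking the sign conventions through the identification $\xi = -n\frac{\pl}{\pl t}$ and the null-frame decomposition of $\vec{H}$, and confirming that constancy of $\langle \vec{H},\Lb\rangle$ is exactly what is needed to pass from the Minkowski identity (with $\langle \vec{H},\Lb\rangle$ factored out) to the Heintze-Karcher functional (with $\langle \vec{H},\Lb\rangle$ in the denominator). All the analytic difficulty has already been absorbed into the monotonicity formula and Brendle's inequality underlying Theorem \ref{Spacetime Heintze-Karcher inequality}.
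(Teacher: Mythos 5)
Your argument is correct and matches the paper's own proof essentially verbatim: both reduce the Minkowski identity (\ref{k=1 Minkowski formula}) via the torsion-free condition and the null-frame decomposition of $\vec{H}$, divide by the constant $\langle \vec{H},\Lb\rangle$, and invoke the equality case of the spacetime Heintze-Karcher inequality (Theorem \ref{Spacetime Heintze-Karcher inequality}). No discrepancies to report.
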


\begin{proof}
Write $\vec{H} = -\frac{1}{2} \langle \vec{H},\Lb \rangle L - \frac{1}{2} \langle \vec{H}, L \rangle \Lb$. From the assumption, $(D_a \Lb)^\perp=0$, the spacetime Minkowski formula (\ref{k=1 Minkowski formula}) becomes
\begin{align*}
-(n-1) \int_\Sigma \langle \frac{\pl}{\pl t},\Lb \rangle d\mu - \frac{1}{2} \int_\Sigma \langle \vec{H},\Lb \rangle Q(L,\Lb)=0.
\end{align*}
\par

Again from the assumption, $\langle \vec{H},\Lb \rangle$ is a positive constant function and we can divide both sides by $\langle \vec{H},\Lb \rangle$ to get
\begin{align*}
-(n-1) \int_\Sigma \dfrac{\langle \frac{\pl}{\pl t}, \Lb\rangle}{\langle \vec{H},\Lb \rangle} d\mu - \frac{1}{2} \int_\Sigma Q(L,\Lb) \rangle d\mu =0.
\end{align*}
Hence the equality is achieved in the spacetime Heintze-Karcher inequality (\ref{Heintze-Karcher inequality}) and we conclude that $\Sigma$ lies in a shear-free null hypersurface. 
\end{proof}

\medskip

 \begin{definition}\label{null hypersurface of symmetry} A null hypersurface in an $(n+1)$-dimensional spherically symmetric spacetime is called a {\it null hypersurface of symmetry} if it is invariant under the $SO(n)$ isometry that defines the spherical symmetry. In other words, it is generated by a sphere of symmetry.  
\end{definition}
We remark that in the Lorentzian space forms, the additional boost isometry sends a null hypersurface of symmetry into another one defined by a conjugate $SO(n)$-action.

An important example of the spacetime satisfying  Assumption \ref{spacetime_assumption} is the exterior Schwarzschild spacetime for which the metric has the form \eqref{Schwarzschild}. Since the spheres of symmetry are the only closed umbilical hypersurfaces in the totally geodesic time slice of the Schwarzschild spacetimes \cite[Corollary 1.2]{Brendle13}, as a direct corollary of the above spacetime Alexandrov theorem, we obtain   

\begin{theorem}[Theorem B]\label{Alexandrov theorem for 2-surface in the Schwarzschild spacetime}
Let $\Sigma$ be a future incoming null embedded (see Definition \ref{future incoming null embedded}) closed embedded spacelike codimension-two submanifold in the $(n+1)$-dimensional Schwarzschild spacetime. Suppose there is a future incoming null normal vector field $\underline{L}$ along $\Sigma$ such that $\langle \vec{H}, \underline{L}\rangle$ is a non-zero constant and $(D\underline{L})^\perp=0$. Then $\Sigma$ lies in a null hypersurface of symmetry.
\end{theorem}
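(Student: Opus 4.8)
The plan is to read off Theorem \ref{Alexandrov theorem for 2-surface in the Schwarzschild spacetime} from the machinery already assembled: the $k=1$ Minkowski formula, the spacetime Heintze--Karcher inequality, and Brendle's rigidity for umbilical hypersurfaces in the Schwarzschild slice. First I would verify that the $(n+1)$-dimensional Schwarzschild metric \eqref{Schwarzschild} is an instance of Assumption \ref{spacetime_assumption}: it is the warped product with $f^2(r)=1-2m/r^{n-2}$, which is case (ii) (the horizon $\{f=0\}$ sits at $r_0=(2m)^{1/(n-2)}$, with $f>0$ beyond), and being Ricci-flat it satisfies the null convergence condition \eqref{null_convergence} trivially. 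By Remark \ref{remark_assumption}, $Q=r\,dr\wedge dt$ is a conformal Killing--Yano two-form with $\xi=\mathrm{div}_V Q=-n\,\partial_t$ a Killing field, so Theorem \ref{monotonicity} applies and both the Minkowski identity \eqref{k=1 Minkowski formula} and the Heintze--Karcher inequality \eqref{Heintze-Karcher inequality} are available.

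Next, assuming first that $c:=\langle\vec{H},\Lb\rangle$ is a positive constant, I would feed the hypotheses into \eqref{k=1 Minkowski formula}. With $\xi=-n\,\partial_t$, $(D\Lb)^\perp=0$, and the decomposition $\vec{H}=-\tfrac12\langle\vec{H},\Lb\rangle L-\tfrac12\langle\vec{H},L\rangle\Lb$ (so that $Q(\vec{H},\Lb)=-\tfrac12\langle\vec{H},\Lb\rangle Q(L,\Lb)$, using $Q(\Lb,\Lb)=0$), the Minkowski formula collapses to $-(n-1)\int_\Sigma\langle\partial_t,\Lb\rangle\,d\mu-\tfrac12\int_\Sigma\langle\vec{H},\Lb\rangle Q(L,\Lb)\,d\mu=0$. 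Dividing by the constant $c$ gives precisely equality in \eqref{Heintze-Karcher inequality}. By the rigidity clause of Theorem \ref{Spacetime Heintze-Karcher inequality}, the slice $S=F(\Sigma,1)$ from the flow \eqref{flow} — the intersection of the future incoming null hypersurface of $\Sigma$ with the time slice $\mathcal{M}_T$, which is smooth, embedded and orientable by the future incoming null embedded hypothesis (Definition \ref{future incoming null embedded}) — is totally umbilical in the time slice $(M,g)$.

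The decisive step is to promote \emph{umbilical} to \emph{sphere of symmetry}. Here I would invoke Brendle's classification \cite[Corollary 1.2]{Brendle13}: in a totally geodesic time slice of the Schwarzschild spacetime the only closed embedded umbilical hypersurfaces are the centered coordinate spheres, i.e. the spheres of symmetry. Hence $S$ is a sphere of symmetry, and the incoming null hypersurface it generates is invariant under the $SO(n)$ action defining the spherical symmetry; by Definition \ref{null hypersurface of symmetry} this is a null hypersurface of symmetry containing $\Sigma$, which is the assertion.

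Finally I would dispose of the sign, since the statement allows $c\neq 0$ rather than $c>0$. As $\langle L,\Lb\rangle=-2$ is constant one has $\zeta_L=-\zeta_\Lb$, so $(D\Lb)^\perp=0$ forces $(DL)^\perp=0$ as well; and the algebraic identity $\langle\vec{H},\vec{H}\rangle=-\langle\vec{H},L\rangle\langle\vec{H},\Lb\rangle$ shows (when $\vec{H}$ is spacelike) that $\langle\vec{H},L\rangle$ and $\langle\vec{H},\Lb\rangle$ have opposite signs, so the negative case is governed by the outgoing/time-reversed analogue embodied in Theorem \ref{Spacetime Heintze-Karcher inequality: past incoming}. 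I expect the only genuine delicacy to be this sign bookkeeping, together with confirming that the inequality and Brendle's rigidity share matching embeddedness and orientability hypotheses; the analytic substance — the monotonicity of $\mathcal{F}$ along \eqref{flow} and the resulting Heintze--Karcher inequality — is already in hand, so the proof is essentially an assembly of the preceding spacetime Alexandrov argument plus one invocation of the uniqueness of umbilical hypersurfaces in the Schwarzschild slice.
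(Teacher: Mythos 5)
Your main argument is exactly the paper's: specialize the $k=1$ Minkowski formula \eqref{k=1 Minkowski formula} using $\xi=-n\,\partial_t$ and $(D\Lb)^\perp=0$, divide by the positive constant $\langle\vec H,\Lb\rangle$ to force equality in the spacetime Heintze--Karcher inequality \eqref{Heintze-Karcher inequality}, conclude via the rigidity clause that the slice $S$ is umbilical, and then invoke \cite[Corollary 1.2]{Brendle13} to upgrade ``umbilical'' to ``sphere of symmetry.'' For the case $\langle\vec H,\Lb\rangle>0$ this is correct and is precisely how the paper derives Theorem \ref{Alexandrov theorem for 2-surface in the Schwarzschild spacetime} from the general spacetime Alexandrov theorem.

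The one point where your write-up goes beyond the paper --- the sign bookkeeping for a \emph{negative} constant --- does not go through as stated. If $\langle\vec H,\Lb\rangle<0$ and $\vec H$ is spacelike, your identity $\langle\vec H,\vec H\rangle=-\langle\vec H,L\rangle\langle\vec H,\Lb\rangle$ gives $\langle\vec H,L\rangle>0$, whereas Theorem \ref{Spacetime Heintze-Karcher inequality: past incoming} requires $\langle\vec H,L\rangle<0$; moreover that theorem needs $\Sigma$ to be \emph{past} incoming null embedded, which is not among the hypotheses here (only future incoming null embeddedness is assumed), and spacelikeness of $\vec H$ is not assumed either. So the negative case is not ``governed by the time-reversed analogue'' in any direct way. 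Note that the paper itself only proves the positive case: the introduction's statement of Theorem B assumes $\langle\vec H,\Lb\rangle$ is a \emph{positive} constant, and the body's ``direct corollary'' argument rests on the preceding spacetime Alexandrov theorem, whose hypothesis is also positivity. You should either restrict to the positive constant (matching what is actually proved) or supply a genuinely new argument for the negative case.
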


\medskip
As observed in Proposition \ref{single equation}, the condition in the above theorem can be characterized in terms of the norm of the mean curvature vector and the connection one-form in the mean curvature gauge. 

\begin{Theorem B'}
Let $\Sigma$ be a future incoming null embedded (see Definition \ref{future incoming null embedded}) closed embedded spacelike codimension-two submanifold in the Schwarzschild spacetime with spacelike mean curvature $\vec{H}$. Suppose $\alpha_{\vec{H}} = d \log|\vec{H}|$ on $\Sigma$. Then $\Sigma$ lies in a null hypersurface of symmetry.
\end{Theorem B'}

Finally, we generalize a result of  Yau \cite{Y} and Chen \cite{Chen} to the Schwarzschild spacetime.
\begin{corollary}[Corollary C]\label{parallel}
Let $\Sigma$ be a closed embedded spacelike codimension-two submanifold with parallel mean curvature vector in the Schwarzschild spacetime. Suppose $\Sigma$ is both future and past incoming null embedded. Then $\Sigma$ is a sphere of symmetry.
\end{corollary}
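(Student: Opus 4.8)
The plan is to show that the parallel mean curvature condition simultaneously forces the hypotheses of the incoming Alexandrov theorem (Theorem \ref{Alexandrov theorem for 2-surface in the Schwarzschild spacetime}) \emph{and} of its outgoing, time-reversed counterpart, so that $\Sigma$ is trapped inside two \emph{distinct} null hypersurfaces of symmetry whose intersection is forced to be a single sphere of symmetry.

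First I would record what parallelism buys us. Since $\vec{H}$ is parallel in the normal bundle, $\langle \vec{H},\vec{H}\rangle$ is constant; working in the relevant case where $\vec{H}$ is spacelike, $|\vec{H}|$ is a positive constant and the mean curvature gauge $\{e_n^{\vec{H}},e_{n+1}^{\vec{H}}\}$ is itself parallel, so the connection one-form $\alpha_{\vec{H}}(V)=\langle D_V e_n^{\vec{H}},e_{n+1}^{\vec{H}}\rangle$ vanishes identically. In particular both equations $\alpha_{\vec{H}}=d\log|\vec{H}|$ and $\alpha_{\vec{H}}=-d\log|\vec{H}|$ hold trivially, since all three quantities are zero. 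Running the computation behind Proposition \ref{single equation} in reverse, this is exactly equivalent to saying that for the future incoming null normal $\Lb$ one has $\langle \vec{H},\Lb\rangle=|\vec{H}|$ (a positive constant) together with $(D\Lb)^\perp=0$, while for the future outgoing null normal $L$ normalized by $\langle L,\Lb\rangle=-2$ one has $\langle \vec{H},L\rangle=-|\vec{H}|$ (a negative constant) together with $(DL)^\perp=0$. Thus parallelism is precisely the statement that both the incoming and the outgoing Alexandrov data hold at once.

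Next I would feed each set of data into the corresponding rigidity statement, using one of the two embeddedness hypotheses each time. From future incoming null embeddedness together with $\langle \vec{H},\Lb\rangle$ a positive constant and $(D\Lb)^\perp=0$, Theorem \ref{Alexandrov theorem for 2-surface in the Schwarzschild spacetime} (equivalently Theorem B') places $\Sigma$ inside an incoming null hypersurface of symmetry $\underline{C}$. Reversing the time orientation, the outgoing data $\langle \vec{H},L\rangle<0$ constant and $(DL)^\perp=0$ combined with past incoming null embeddedness put us in the hypotheses of the past Heintze–Karcher inequality (Theorem \ref{Spacetime Heintze-Karcher inequality: past incoming}); feeding the Minkowski formula \eqref{k=1 Minkowski formula} into it exactly as in the proof of Theorem \ref{Alexandrov theorem for 2-surface in the Schwarzschild spacetime} forces the equality case, so $\Sigma$ lies in an outgoing shear-free null hypersurface, which by Brendle's rigidity (spheres of symmetry being the only closed umbilical hypersurfaces in a time slice) is an outgoing null hypersurface of symmetry $C$.

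Finally I would intersect the two. The incoming hypersurface $\underline{C}$ and the outgoing hypersurface $C$ are distinct $SO(n)$-invariant null hypersurfaces meeting transversally along the codimension-two locus containing $\Sigma$; passing to the spherically symmetric quotient, they project to an incoming and an outgoing null curve in a two-dimensional Lorentzian surface, which meet only in isolated points, each lifting to a single sphere of symmetry. Hence $\underline{C}\cap C$ is a sphere of symmetry, and as $\Sigma$ is a connected closed codimension-two submanifold contained in it, $\Sigma$ must coincide with that sphere of symmetry. The main obstacle I anticipate is the bookkeeping of the incoming/outgoing and future/past labels, so that the two embeddedness hypotheses line up correctly with the two Heintze–Karcher inequalities; a secondary technical point is justifying that the parallel vector $\vec{H}$ is genuinely spacelike (so that the mean curvature gauge, and with it the entire argument, is available) rather than null or timelike.
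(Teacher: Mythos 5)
Your proposal is correct and follows essentially the same route as the paper: parallelism gives $|\vec{H}|$ constant and $\alpha_{\vec{H}}=0$, so Theorem B' (via Proposition \ref{single equation}) and its time-reversed counterpart place $\Sigma$ in both an incoming and an outgoing null hypersurface of symmetry, whose intersection is a sphere of symmetry. The spacelikeness of $\vec{H}$ that you flag as a loose end is likewise left implicit in the paper's one-line proof, so your write-up is, if anything, the more careful of the two.
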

\begin{proof}
The condition of parallel mean curvature vector implies $|\vec{H}|$ is constant and $\alpha_{\vec{H}}$ vanishes. The previous theorem implies $\Sigma$ is the intersection of one incoming and one outgoing null hypersurface of symmetry. Therefore, $\Sigma$ is a sphere of symmetry.
\end{proof}

\bigskip
\section{General Minkowski Formulae for mixed Higher order Mean Curvature}
\par

In this section, we introduce the notion of mixed higher order mean curvature of a codimension-two submanifold $\Sigma$ in a spacetime $V$ of dimension
$(n+1)$. 
Let $\bar{R}$ denote the curvature tensor of $V$. Let $L$ and $\underline{L}$ be two null normals of $\Sigma$ such that $\langle L, \underline{L}\rangle=-2$.  Recall the null second fundamental forms with respect to $L, \Lb$:
\begin{align*}
\chi_{ab}=\langle D_{\partial_a} L, \partial_b\rangle \\
\underline{\chi}_{ab}=\langle D_{\partial_a}\udl{ L}, \partial_b\rangle
\end{align*}
and write $\zeta=\zeta_L$ for the connection 1-form with respect to $L$:
\[ \zeta_a = \frac{1}{2} \langle D_{\partial_a} L,\Lb \rangle. \] 

\medskip
\begin{definition} \label{prs}
For any two non-negative integers $r$ and $s$ with $0\le r+s\leq n-1$,  the \text{mixed higher order mean curvature} $P_{r,s} (\chi, \udl{\chi})$ with respect to $L$ and $\Lb$ is defined through the following expansion: 
\begin{equation} \label{det}
\det(\sigma+y \chi+\udl{y}\udl{\chi})=\sum_{0\leq r+s\leq n-1}\frac{(r+s)!}{r! s!}y^r \udl{y}^s P_{r,s}(\chi, \udl{\chi}), \end{equation}
where $y$ and $\udl{y}$ are two real variables and $\sigma$ is the induced metric on $\Sigma$. 
We also define symmetric 2- tensors $T_{r,s}^{ab} (\chi, \udl{\chi})$ and $\udl{T}_{r,s}^{ab} (\chi, \udl{\chi}) $ on $\Sigma$ by
\[T_{r,s}^{ab}(\chi, \udl{\chi})=\frac{\delta P_{r,s} (\chi, \udl{\chi})}{\delta \chi_{ab}}\text{   and   }\ \udl{T}_{r,s}^{ab} (\chi, \udl{\chi})=\frac{\delta P_{r,s} (\chi, \udl{\chi})}{\delta \udl{\chi}_{ab}}.\]

\end{definition}
In the following, we write $P_{r,s}$ for $P_{r,s}(\chi,\chib)$, $T_{r,s}^{ab}$ for $T_{r,s}^{ab}(\chi, \udl{\chi})$, etc. when there is no confusion. Note that $P_{1,0} = \mxtr\chi$ and $P_{0,1} = \mxtr\chib$, and a simple computation yields
\[T_{1, 0}^{ab}=\udl{T}_{0,1}^{ab}=\sigma^{ab}.\]
Moreover, when $n=3$, it is easy to check that
 \begin{align}\label{T in dimension 3}
\begin{split}
2T_{1,1}^{ab} &= \sigma^{ab} tr\udl{\chi}-\udl{\chi}^{ab};  \\
2 \udl{T}_{1,1}^{ab} &= \sigma^{ab} tr {\chi} -{\chi}^{ab}.  
\end{split}
\end{align}
It is worth remarking that the quantities $P_{r,r}$, \ $T_{r, r}^{ab} L$, \ $ \udl{T}_{r, r}^{ab} \udl{L}$, \  $T_{r+1, r}^{ab}$ and $\udl{T}_{r, r+1}^{ab}$ are all independent of the scaling of $L$ and $\udl{L}$ as $L \rw aL,\ \Lb \rw \frac{1}{a} \Lb$. 

\medskip
In the rest of this section, we focus on spacelike codimension-two submanifolds in a spacetime of constant curvature such as the Minkowski spacetime $\R^{n,1}$, the de-Sitter spacetime, or the anti de-Sitter spacetime. Before proving the Minkowski formulae for those mixed higher order mean curvatures, we observe that $P_{r,s}(\chi, \udl \chi)$ shares the divergence free property as $\sigma_k$.

\medskip
\begin{lem}
Let $\Sigma$ be a spacelike codimension-two submanifold in a spacetime of constant curvature. Suppose $\Sigma$ is torsion-free with respect to $L$ and $\Lb$. Then $T^{ab}_{r,s}$ and $\Tb^{ab}_{r,s}$ are divergence free for any $(r,s)$, that is, 
\begin{equation}\label{divergence_eq} \na_b T^{ab}_{r,s} = \na_b\Tb^{ab}_{r,s} =0. \end{equation}
\end{lem}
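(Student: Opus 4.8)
The plan is to split the statement into a differential ingredient---a Codazzi-type total symmetry of the null second fundamental forms---and a purely algebraic ingredient---a Piola/Newton-tensor divergence identity---and then to feed the latter the former through the generating function $\det(\sigma+y\chi+\udl y\,\chib)$ defining the $P_{r,s}$. Throughout, $\na$ is the induced Levi-Civita connection on $\Sigma$ and indices are raised with $\sigma$.

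\emph{Step 1 (Codazzi symmetry).} First I would show that $\na_c\chi_{ab}$ and $\na_c\chib_{ab}$ are totally symmetric $3$-tensors on $\Sigma$. Both $\chi$ and $\chib$ are already symmetric in $a,b$, since $D$ is torsion-free and $L,\Lb$ are normal, so that $\chi_{ab}=-\langle L,(D_{\pl_a}\pl_b)^\perp\rangle$ with $D_{\pl_a}\pl_b$ symmetric in $a,b$. Differentiating this expression and invoking the Codazzi equation, the antisymmetric part $\na_a\chi_{bc}-\na_b\chi_{ac}$ is governed by a normal-curvature term $\bar R(\pl_a,\pl_b,\pl_c,L)$ together with terms proportional to the connection one-form $\zeta_L$. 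The torsion-free hypothesis $\zeta_L=0$ (equivalently $(DL)^\perp=0$) annihilates the latter, while constant ambient curvature forces $\bar R(\pl_a,\pl_b,\pl_c,L)=\kappa\bigl(\langle\pl_b,\pl_c\rangle\langle\pl_a,L\rangle-\langle\pl_a,\pl_c\rangle\langle\pl_b,L\rangle\bigr)=0$ because $L$ is normal. Hence $\na_a\chi_{bc}=\na_b\chi_{ac}$, and combined with symmetry in $b,c$ this gives full symmetry; the identical argument applied with $\zeta_{\Lb}=0$ handles $\chib$.

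\emph{Step 2 (algebraic divergence identity).} Set $W=\sigma+y\chi+\udl y\,\chib$, viewed as the endomorphism $\delta^a_b+y\chi^a_b+\udl y\,\chib^a_b$, so that $\det W$ is a scalar. Since $\na\sigma=0$, Step 1 shows $\na_c W_{ab}$ is totally symmetric for every value of $(y,\udl y)$. Differentiating the expansion in Definition \ref{prs} gives
\begin{equation*}
\frac{\pl}{\pl\chi_{ab}}\det W=y\,(\operatorname{adj}W)^{ab}=\sum_{0\le r+s\le n-1}\frac{(r+s)!}{r!\,s!}\,y^{r}\udl y^{s}\,T^{ab}_{r,s},
\end{equation*}
where $\operatorname{adj}$ denotes the metric-raised adjugate. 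The crux is the Piola identity $\na_b(\operatorname{adj}W)^{ab}=0$: expressing the adjugate through the generalized Kronecker delta, every term produced by $\na_b$ carries a factor $\na_b W_{\,\cdot\,\cdot}$ sitting in an index block that is antisymmetrized in $b$, so the total symmetry of $\na_c W_{ab}$ from Step 1 forces each such term to vanish.

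\emph{Step 3 (conclusion).} Because $\na_b\bigl(y\,(\operatorname{adj}W)^{ab}\bigr)=0$ holds identically in $(y,\udl y)$ and the prefactor $y$ is constant on $\Sigma$, matching coefficients of $y^{r}\udl y^{s}$ yields $\na_b T^{ab}_{r,s}=0$ for every $(r,s)$; running the same computation with $\pl/\pl\chib_{ab}$ in place of $\pl/\pl\chi_{ab}$ gives $\na_b\Tb^{ab}_{r,s}=0$. The step I expect to be the main obstacle is Step 1: one must set up the Codazzi equation in the null frame with care and verify that every occurrence of the connection one-form cancels, since this is exactly where both hypotheses---torsion-freeness and constancy of the ambient curvature---are consumed.
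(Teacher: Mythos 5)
Your proposal is correct and follows essentially the same route as the paper: differentiate the generating function $\det(\sigma+y\chi+\udl y\,\chib)$, observe that the divergence of the resulting adjugate-type tensor is controlled by the antisymmetrized part of $\na\chi$ and $\na\chib$, and kill that part via the Codazzi equations using torsion-freeness and constant ambient curvature. The only (cosmetic) difference is that you verify the algebraic step via the Piola identity for the adjugate, while the paper differentiates $(\tilde\sigma^{-1})^{ab}\det\tilde\sigma$ directly; both reduce to the same cancellation.
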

\begin{proof}
We denote $\tilde{\sigma}_{ab}= (\sigma+y \chi+\udl{y}\udl{\chi})_{ab}$ and its inverse by $(\tilde{\sigma}^{-1})^{ab}$. Formally differentiating \eqref{det} with respect to $\chi_{ab}$ and $\udl{\chi}_{ab}$, we obtain
\begin{eqnarray}\label{eq.T_ab}
y (\tilde{\sigma}^{-1})^{ab}\det(\tilde{\sigma})&=\sum\frac{(r+s)!}{r! s!} y^r \udl{y}^s T_{r,s}^{ab},
\end{eqnarray}
and
\begin{eqnarray}\label{eq.T_ab1}
\udl{y}(\tilde{\sigma}^{-1})^{ab}\det(\tilde{\sigma})&=\sum \frac{(r+s)!}{r! s!}y^r \udl{y}^s \udl{T}_{r,s}^{ab}.
\end{eqnarray}
\par
Next, taking a covariant derivative on both sides of equation (\ref{eq.T_ab}), the left-hand side becomes 
\begin{align*}
-y (\tilde{\sigma}^{-1})^{ac}\nabla_b(y\chi+\udl{y}\udl{\chi})_{cd} (\tilde{\sigma}^{-1})^{db}\det(\tilde{\sigma})
+y(\tilde{\sigma}^{-1})^{ab}\nabla_b(y\chi+\udl{y}\udl{\chi})_{cd} (\tilde{\sigma}^{-1})^{cd}\det(\tilde{\sigma}).
\end{align*}

Switching indices $b$ and $c$ in the second summand, we arrive at 

\begin{align}\label{divergence}
y \left(\tilde{\sigma}^{-1}\right)^{ac}\left[y\left(\nabla_c \chi_{bd}-\nabla_b\chi_{cd}\right)+\udl{y} (\nabla_c \udl{\chi}_{bd}-\nabla_b\udl{\chi}_{cd})\right] \left(\tilde{\sigma}^{-1}\right)^{db}\det(\tilde{\sigma})=\sum_{r,s} \frac{(r+s)!}{r! s!}y^r \udl{y}^s \nabla_b T_{r,s}^{ab}.
\end{align}

Similar computation applied to (\ref{eq.T_ab1}) yields
\begin{align}\label{divergence_}
 \udl{y}\left(\tilde{\sigma}^{-1}\right)^{ac}\left[y(\nabla_c \chi_{bd}-\nabla_b\chi_{cd})+\udl{y} (\nabla_c \udl{\chi}_{bd}-\nabla_b\udl{\chi}_{cd}) \right] \left(\tilde{\sigma}^{-1}\right)^{db}\det(\tilde{\sigma})=\sum_{r,s}\frac{(r+s)!}{r! s!} y^r \udl{y}^s \nabla_b\udl{T}_{r,s}^{ab}. 
\end{align} 
\par
For submanifolds in spacetime, the Codazzi equations give 
\begin{equation}\label{codazzi}\begin{split}\nabla_c \chi_{bd}-\nabla_b\chi_{cd}&=\langle \bar{R}(\partial_c, \partial_b)L, \partial_d\rangle+\zeta_b\chi_{cd}-\zeta_c\chi_{bd}\\ 
\nabla_c \udl{\chi}_{bd}-\nabla_b\udl{\chi}_{cd}&= \langle \bar{R}(\partial_c, \partial_b)\udl L, \partial_d\rangle-\zeta_b\udl{\chi}_{cd}+\zeta_c\udl{\chi}_{bd}. \end{split}
\end{equation}

Since $\Sigma$ is assumed to be torsion-free and the ambient space is of constant curvature, the left hand side of both (\ref{divergence}) and (\ref{divergence_}) are zero. Then, the assertion follows by comparing the coefficients of the term $y^r\udl y^s$.
\end{proof}
\bigskip


\bigskip
We now derive the Minkowski formulae for the mixed higher order mean curvatures of torsion-free submanifolds in constant curvature spacetimes.
\medskip
\begin{theorem}[Theorem D]\label{minkowski1}
Let $\Sigma$ be a closed spacelike codimension-two submanifold in a spacetime of constant curvature. Suppose $\Sigma$ is torsion-free
with respect to the null frame $L$ and $\Lb$. Then
\begin{align}\label{r-1ands}
2\int_\Sigma P_{r-1, s}(\chi,\udl\chi)\langle  L, \frac{\partial}{\partial t}\rangle d\mu +\frac{r+s}{n-(r+s)} \int_\Sigma P_{r, s}(\chi,\udl\chi)Q(L, \udl{L}) d\mu =0 
\end{align}
and
\begin{align}\label{rands-1}
2\int_\Sigma P_{r, s-1}(\chi,\udl\chi)\langle  \Lb, \frac{\partial}{\partial t}\rangle d\mu -\frac{r+s}{n-(r+s)}  \int_\Sigma P_{r, s}(\chi,\udl\chi) Q(L, \udl{L}) d\mu =0. 
\end{align}
\end{theorem}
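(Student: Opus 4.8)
The plan is to obtain each of \eqref{r-1ands} and \eqref{rands-1} as the integral over the closed manifold $\Sigma$ of a pure divergence, generalizing the one-form $\mathcal{Q}=Q(\pl_a,\Lb)\,du^a$ used in Theorem \ref{first Minkowski identity}. For \eqref{rands-1} I would introduce the tangential vector field $W^a=\udl{T}_{r,s}^{ab}\,Q(\pl_b,\Lb)$, and for \eqref{r-1ands} the companion $\widetilde{W}^a=T_{r,s}^{ab}\,Q(\pl_b,L)$. The two computations are identical after interchanging the roles of $L$ and $\Lb$, so I describe only the first and indicate the sign change at the end.

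First I would record two algebraic identities for the Newton-type tensors, both read off from the generating function \eqref{det}. Contracting \eqref{eq.T_ab1} with $\udl\chi_{ab}$ and matching coefficients of $y^r\udl{y}^s$ gives the Euler relation $\udl\chi_{ab}\,\udl{T}_{r,s}^{ab}=s\,P_{r,s}$ (homogeneity of $P_{r,s}$ of degree $s$ in $\udl\chi$). Contracting \eqref{eq.T_ab1} instead with $\sigma_{ab}$, and using that $\det$ is homogeneous of degree $n-1$ so that $\det(\lambda\sigma+y\chi+\udl{y}\udl\chi)=\sum\frac{(r+s)!}{r!s!}y^r\udl{y}^s\lambda^{\,n-1-r-s}P_{r,s}$, yields the trace relation $\sigma_{ab}\,\udl{T}_{r,s}^{ab}=\frac{s\,(n-r-s)}{r+s}\,P_{r,s-1}$. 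The analogues $\chi_{ab}\,T_{r,s}^{ab}=r\,P_{r,s}$ and $\sigma_{ab}\,T_{r,s}^{ab}=\frac{r\,(n-r-s)}{r+s}\,P_{r-1,s}$ hold for $T_{r,s}$.

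Next I would compute $div_\Sigma W$. Since $\udl{T}_{r,s}$ is divergence free by \eqref{divergence_eq}, only the derivative of $Q(\pl_b,\Lb)$ survives, and the Christoffel terms cancel exactly as in \eqref{derivation}, leaving three pieces contracted against $\udl{T}_{r,s}^{ab}$: the conformal Killing--Yano term $(D_{\pl_a}Q)(\pl_b,\Lb)$, the term $Q\big((D_{\pl_a}\pl_b)^\perp,\Lb\big)$, and $Q(\pl_b,D_{\pl_a}\Lb)$. For the first, I symmetrize in $a,b$ (legitimate since $\udl{T}_{r,s}^{ab}$ is symmetric) and apply \eqref{CKY equation} with $Z=\Lb$; because $\langle\pl_a,\Lb\rangle=0$ the two half-terms drop and only $\tfrac1n\sigma_{ab}\langle\xi,\Lb\rangle$ remains, which by the trace relation equals $\tfrac{s(n-r-s)}{n(r+s)}P_{r,s-1}\langle\xi,\Lb\rangle$. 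For the second, writing the normal-valued second fundamental form in the null frame as $(D_{\pl_a}\pl_b)^\perp=\tfrac12\udl\chi_{ab}L+\tfrac12\chi_{ab}\Lb$ and using $Q(\Lb,\Lb)=0$ collapses it to $\tfrac12\udl\chi_{ab}\,\udl{T}_{r,s}^{ab}Q(L,\Lb)=\tfrac{s}{2}P_{r,s}Q(L,\Lb)$ via the Euler relation.

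The third piece is the crux, and I expect it to be the main obstacle. Torsion-freeness forces $(D\Lb)^\perp=0$, so $D_{\pl_a}\Lb=\udl\chi_{a}{}^{d}\pl_d$ is tangential and the piece becomes $\udl{T}_{r,s}^{ab}\,\udl\chi_{a}{}^{d}\,Q_{bd}$ with $Q_{bd}=Q(\pl_b,\pl_d)$ antisymmetric; it therefore sees only the antisymmetric part of $\udl{T}_{r,s}\udl\chi$, i.e.\ the commutator $[\udl{T}_{r,s},\udl\chi]$. Here I would use that torsion-freeness makes the frame $L,\Lb$ parallel in the normal bundle, so the normal connection is flat; the Ricci equation, together with constant ambient curvature (which makes $\langle\bar R(\pl_a,\pl_b)L,\Lb\rangle=0$), then forces $[\chi,\udl\chi]=0$. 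As $\udl{T}_{r,s}$ is a polynomial in the now-commuting pair $\chi,\udl\chi$, it commutes with $\udl\chi$ and this entire piece vanishes pointwise. This is precisely the step where both hypotheses are used essentially, in contrast to the routine generating-function bookkeeping above. Finally I would integrate $div_\Sigma W$ over the closed $\Sigma$ (so the left side is zero), substitute $\xi=-n\frac{\pl}{\pl t}$, valid in constant curvature, and rearrange—dividing by $s$ and multiplying by $-\tfrac{2(r+s)}{n-r-s}$—to reach \eqref{rands-1}. The identical argument with $\widetilde{W}^a=T_{r,s}^{ab}Q(\pl_b,L)$ produces \eqref{r-1ands}, the opposite sign of the $Q(L,\Lb)$ term arising because $Q\big((D_{\pl_a}\pl_b)^\perp,L\big)=-\tfrac12\chi_{ab}Q(L,\Lb)$. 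The cases $s=0$ (resp.\ $r=0$) are vacuous, every term carrying a factor of $s$ (resp.\ $r$).
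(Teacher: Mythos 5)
Your proposal is correct and follows essentially the same route as the paper: integrating the divergence of $T_{r,s}^{ab}Q(L,\pl_b)$ (respectively $\udl T_{r,s}^{ab}Q(\Lb,\pl_b)$), using divergence-freeness of the Newton tensors, the conformal Killing--Yano equation for the trace term, the Euler relation for the $Q(L,\Lb)$ term, and the commutativity $[\chi,\chib]=0$ forced by the Ricci equation under torsion-freeness and constant curvature to kill the antisymmetric piece. The only cosmetic differences are the sign convention $Q(\pl_b,\Lb)$ versus $Q(\Lb,\pl_b)$ and your derivation of the trace identity $\sigma_{ab}\udl T_{r,s}^{ab}=\frac{s(n-r-s)}{r+s}P_{r,s-1}$ from homogeneity of the generating function rather than from the polarization computation in Appendix A.
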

\begin{proof}
By the divergence theorem, we have
\begin{equation}\label{nablazero}
\int_\Sigma \nabla_a[ T^{ab} Q(L, \partial_b)]d\mu=0 
\end{equation} 
where $T^{ab}$ is one of $T_{r,s}^{ab} $.
Since $\Sigma$ is torsion-free, direct computation shows \[ \nabla_a[ Q(L, \partial_b)]=(D_a Q)(L, \partial_b)+\chi_a^cQ_{cb}+\frac{1}{2}\chi_{ab} Q(L, \udl{L}).
\]
By (\ref{CKY equation}), we derive
\[ (D_a Q)(L, \partial_b)+(D_b Q)(L, \partial_a)=2\langle L, \frac{\partial}{\partial t}\rangle \sigma_{ab}.\]
Therefore, we obtain 
\[ \na_a [T^{ab}Q(L,\pl_b)] = T^{ab} \sigma_{ab} \langle L,\frac{\pl}{\pl t} \rangle + \frac{1}{2} (T^{ba}\chi_a^c-T^{ca}\chi_a^b)Q_{cb}
+\frac{1}{2} (T^{ab}\chi_{ab}) Q(L, \udl{L}),\] where the second term on the right hand side comes from anti-symmetrization of the indices. 
Recall the Ricci equation
\begin{equation}\label{ricci_eq}\frac{1}{2} \chi_a^{\;\;c} \chib_{cb} - \frac{1}{2} \chi_b^{\;\;c} \chib_{ca} + (d\zeta)_{ab} = \frac{1}{2} \langle \bar{R}(\pl_a,\pl_b)L,\Lb \rangle.\end{equation}
Since $\Sigma$ is torsion-free and the ambient space is of constant curvature, the equation implies $\chi$ and $\udl{\chi}$ commute. Note that $T^{ba}$ is a polynomial of $\chi$ and $\udl{\chi}$, and thus also commutes with  $\chi$ and $\udl{\chi}$. It follows that
\[
T^{ba}\chi_a^c-T^{ca}\chi_a^b=0.
\]
Putting these together, (\ref{nablazero}) implies 
\[
\int_{\Sigma} \left(\sigma_{ab} T^{ab}_{r,s}\right) \langle L, \frac{\partial}{\partial t}\rangle d\mu + \frac{1}{2} \int_{\Sigma} \left(\chi_{ab}T^{ab}_{r, s}\right)Q(L, \udl L) d\mu =0.
\]
\eqref{r-1ands} follows from \eqref{a} and \eqref{b} in the appendix.

The second formula is derived similarly by considering
\begin{eqnarray}\label{nablazero1}
 \int_\Sigma \na_a [\Tb^{ab}Q(\Lb,\pl_b)] d\mu =0, 
 \end{eqnarray}
and using
\[
\nabla_a \left[ Q(\udl L, \partial_b ) \right] = (D_a Q) (\udl L, \partial_b) + \udl \chi_a^c Q_{cb}- \frac{1}{2} \udl \chi_{ab}Q(L, \udl L).
\]
\end{proof}

\medskip
\par
From the above proof, we see that the spacetime Minkowski formulae (\ref{r-1ands}) and (\ref{rands-1}) follow from 
\[
\int_\Sigma \nabla_a[ T^{ab} Q(L, \partial_b)]d\mu=0, \text{ and }\int_\Sigma \na_a [\Tb^{ab}Q(\Lb,\pl_b)] d\mu =0.
\]
In fact, we have two more possible identities:
\[
\int_\Sigma \nabla_a[ T^{ab} Q(\udl L, \partial_b)]d\mu=0, \text{ and }\int_\Sigma \na_a [\Tb^{ab}Q(L,\pl_b)] d\mu =0.
\]
Following the same line, one can prove another two spacetime Minkowski formulae:
\begin{align}\label{r-1andsL}
2\int_\Sigma P_{r-1, s}(\chi,\udl\chi)\langle  \udl L, \frac{\partial}{\partial t}\rangle d\mu - \frac{r+s}{n-(r+s)} \int_\Sigma P_{r-1, s+1}(\chi,\udl\chi)Q(L, \udl{L}) d\mu =0, 
\end{align}
and 
\begin{align}\label{rands-1L}
2\int_\Sigma P_{r, s-1}(\chi,\udl\chi)\langle  L, \frac{\partial}{\partial t}\rangle d\mu + \frac{r+s}{n-(r+s)} \int_\Sigma P_{r+1, s-1}(\chi,\udl\chi)Q(L, \udl{L}) d\mu =0.
\end{align}

\bigskip
\par
We finish this section by showing that (\ref{r-1ands}) and (\ref{rands-1}) recover the classical Minkowski formulae for hypersurfaces in Riemannain space forms - Euclidean space, hemisphere, and hyperbolic space.

It is well known that these space forms $S^n_{-\kappa}$ can be embedded as totally geodesic time-slices in the Minkowski spacetime, the de Sitter spacetime, and the anti-de Sitter spacetime respectively. We write the spacetime metric in static coordinates:
\[ \bar{g} = - (1+\kappa r^2) dt^2 + \frac{1}{1+\kappa r^2} dr^2 + r^2 g_{S^{n-1}}. \] 
Given a hypersurface $\Sigma \subset S^n_{-\kappa}$, we view it as a spacelike codimension-two submanifold lying in a totally geodesic time-slice. Let $\nu$ denote the outward unit normal of $\Sigma$ in the totally geodesic slice. We take $L = \frac{1}{\sqrt{1+\kappa r^2}}\frac{\pl}{\pl t} + \nu$ and $\Lb = \frac{1}{\sqrt{1+\kappa r^2}}\frac{\pl}{\pl t} - \nu$ to get $\chi = -\chib = h$ and $\zeta=0$. Here $h$ is the second fundamental form of $\Sigma$ in the totally geodesic time-slice. Therefore, according to the definition (\ref{det}), 
\[ 
\det(\sigma + (y-\udl{y})h)  = \sum_{0 \le r+s \le n-1} \frac{(r+s)!}{r!s!}y^r \udl{y}^s P_{r,s}(\chi, \udl \chi)
\]
and thus $P_{r,s}(\chi, \udl \chi)= (-1)^s \sigma_{r+s}(h)$. Moreover, $\langle L, \frac{\pl}{\pl t} \rangle = -\sqrt{1+\kappa r^2}$ and standard computation gives
\begin{eqnarray}\label{QLL}
Q(L,\Lb) = 2 \langle X,\nu \rangle, \ \text{ with } X=r \sqrt{1+\kappa r^2} \frac{\pl}{\pl r}.
\end{eqnarray}
\par
Putting these expressions into the spacetime Minkowski formula (\ref{Minkowski formula for higher order mean curvatureL}), it reduces to
\begin{align}\label{recover}
 (n-r-s) \int_\Sigma \sqrt{1+\kappa r^2} \cdot \sigma_{r+s-1} (h)d\mu = (r+s) \int_\Sigma \sigma_{r+s} (h)\cdot \langle X,\nu \rangle d\mu.
\end{align} 
\par
 We thus recover the classical Minkowski formula (\ref{Minkowski.formula}) in the Euclidean space by letting $k=r+s$ and $\kappa=0$. For the remaining cases, we view the hemisphere as the hypersurface defined by \[ (x^0)^2 + \cdots + (x^n)^2=1, x^0 >0\]
in the Euclidean space and the hyperbolic space as the hypersurface defined by 
\[ 
-(x^0)^2 + (x^1)^2 + \cdots + (x^n)^2=-1, \ x^0 >0
\] 
in the Minkowski spacetime. After making a change of variable $r = \sin\theta$ ($r = \sinh\theta$, resp.) and noting that $X$ is the tangential component of $- \frac{\pl}{\pl x^0}$ ($\frac{\pl}{\pl x^0}$, resp.) for hemisphere (hyperbolic space, resp.), it is not difficult to see that (\ref{recover}) recovers \cite[Corollary 3(b), 3(c)]{Bivens83} (see also \cite{GuanLi, Montiel-Ros91, Strubing84}).


\medskip
\bigskip
\section{Alexandrov Theorems for Submanifolds of constant mixed higher order mean curvature in a spacetime of constant curvature}
\medskip
\par
In Section 3, the simplest case of the spacetime Minkowski formula (Theorem A) was applied to establish the spacetime Alexandrov type theorems concerning codimension-two submanifolds with $\langle \vec{H}, L\rangle =constant$. It is interesting to replace the mean curvature by other invariants from the second fundamental form. 
In the hypersurface case, Ros \cite{Ros87} showed that any closed, embedded hypersurface in $\mathbb R^n$ with constant $\sigma_k$ curvature is a round sphere. This result was generalized to the hyperbolic space by Montiel and Ros \cite{Montiel-Ros91} and to the Schwarzschild manifold by Brendle and Eichmair \cite{BE}. 
\par
In this section, we consider codimension-two submanifolds in a spacetime of constant curvature. More precisely, using the spacetime Minkowski formulae established in the previous section, we prove two Alexandrov-type theorems for submanifolds of constant mixed higher order mean curvatures. The first one assumes constancy of $P_{r,0}$ or $P_{0, s}$ and concludes that the submanifold lies in a null hypersurface of symmetry. The second one assumes the stronger condition of constancy of $P_{r,s}$ for $r>0, s>0$, which forces $\Sigma$ to be a sphere of symmetry. 

\medskip
To state our first Alexandrov theorem, we recall the definition of $\Gamma_k$ cone (also see Definition \ref{Gamma_k cone} in Appendix A). For $1\leq k\leq n-1$, $\Gamma_k$ is a convex cone in $\mathbb R^{n-1}$ such that 
$\Gamma_k = \{ \lambda \in \mathbb R^{n-1} \ : \ \sigma_1(\lambda)>0, \cdots, \sigma_k(\lambda) >0 \}$ where \[\sigma_k(\lambda) = \sum_{i_1<\cdots< i_k} \lambda_{i_1} \cdots \lambda_{i_k}\] is the $k$-th elementary symmetric function. An $(n-1)\times(n-1)$ symmetric matrix $W$ is said to belong to $\Gamma_k$ if its spectrum $\lambda(W) \in \Gamma_k$.

\medskip
 \begin{theorem}\label{Alexandrov theorem for Weingarten submanifolds in Minkowski spacetime}
Let $\Sigma$ be a past (future, respectively) incoming null embedded, closed spacelike codimension-two submanifold in an $(n+1)$-dimensional
spacetime of constant curvature. Suppose $\Sigma$ is torsion-free with respect to $L$ and $\Lb$ and the second fundamental form $\chi \in \Gamma_r$ ($-\chib \in \Gamma_s$, respectively). If $P_{r,0} = C$ ($P_{0,s} = (-1)^s C$, respectively) for some positive constant $C$ on $\Sigma$, then $\Sigma$ lies in a null hypersurface of symmetry.
\end{theorem}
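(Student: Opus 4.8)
The plan is to combine the higher-order spacetime Minkowski formula from Theorem D with the spacetime Heintze–Karcher inequality of Theorem~\ref{Spacetime Heintze-Karcher inequality: past incoming} (in the past incoming case), exactly paralleling the classical Ros-type argument that powers the $\sigma_k$ Alexandrov theorem. Focus on the past incoming case with $\chi \in \Gamma_r$ and $P_{r,0} = C > 0$; the future case follows by time reversal. The strategy is to produce two integral identities whose ratio is pinned by the constancy of $P_{r,0}$, and then invoke the rigidity (equality) clause of Heintze–Karcher to force $\Sigma$ into a shear-free null hypersurface, which in a spacetime of constant curvature is a null hypersurface of symmetry.

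First I would specialize the Minkowski formula \eqref{r-1ands} to $s=0$, giving
\begin{equation}\label{plan:minkA}
2\int_\Sigma P_{r-1,0}\,\langle L,\tfrac{\pl}{\pl t}\rangle\, d\mu + \frac{r}{n-r}\int_\Sigma P_{r,0}\,Q(L,\Lb)\,d\mu = 0.
\end{equation}
Since $P_{r,0} = C$ is a positive constant and $\chi \in \Gamma_r$, the Gårding cone theory (Appendix~A) guarantees $P_{r-1,0} > 0$ and $\mxtr\chi = P_{1,0} > 0$, so $\langle \vec H, L\rangle = -\mxtr\chi < 0$; this is precisely the mean-convexity hypothesis needed to apply Theorem~\ref{Spacetime Heintze-Karcher inequality: past incoming}. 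Next I would invoke a Newton–MacLaurin type inequality relating $P_{r-1,0}$, $P_{r,0}$ and the expansion $\mxtr\chi$ to get pointwise control of the ratio $P_{r-1,0}/\langle\vec H,L\rangle$, since the Heintze–Karcher integrand involves $\langle \tfrac{\pl}{\pl t},L\rangle/\langle\vec H,L\rangle$ rather than $P_{r-1,0}\langle L,\tfrac{\pl}{\pl t}\rangle$. The key is that \eqref{plan:minkA}, after dividing by the constant $C = P_{r,0}$ and using the inequality $P_{r-1,0}\geq c_{n,r}\,C\,(\mxtr\chi)^{-1}$ (equality iff $\chi$ is a multiple of $\sigma$), bounds one term of the Heintze–Karcher functional from one side.

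Then I would chain the Minkowski identity against the Heintze–Karcher inequality
\begin{equation}\label{plan:HK}
(n-1)\int_\Sigma \frac{\langle \tfrac{\pl}{\pl t},L\rangle}{\langle\vec H,L\rangle}\,d\mu - \frac12\int_\Sigma Q(L,\Lb)\,d\mu \geq 0
\end{equation}
to deduce that the two nonnegative contributions must in fact balance, forcing equality throughout. Equality in the Newton–MacLaurin step forces $\chi$ proportional to $\sigma$, i.e.\ $\Sigma$ is shear-free (umbilical in the $L$-direction), while equality in \eqref{plan:HK} forces $\Sigma$ to lie in an outgoing shear-free null hypersurface; in a constant-curvature spacetime a shear-free null hypersurface is automatically a null hypersurface of symmetry, which is the conclusion. \emph{The main obstacle} I anticipate is the algebraic matching between the Minkowski formula—whose integrand is $P_{r-1,0}\langle L,\tfrac{\pl}{\pl t}\rangle$—and the Heintze–Karcher functional—whose integrand is $\langle\tfrac{\pl}{\pl t},L\rangle / \langle\vec H,L\rangle$: making these comparable requires the sharp Gårding/Newton–MacLaurin inequality with a clean equality characterization, and one must verify that the torsion-free hypothesis and the constant-curvature assumption are exactly what is needed to rule out the boundary terms and to convert the pointwise umbilicity conclusion into membership in a symmetric null hypersurface.
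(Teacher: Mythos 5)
Your proposal is correct and follows essentially the same route as the paper's proof: specialize the Minkowski formula \eqref{Minkowski formula for higher order mean curvatureL} to $s=0$, divide by the constant $P_{r,0}$, apply the iterated Newton--MacLaurin inequality (using $\chi\in\Gamma_r$ for positivity and the sign of $\langle L,\frac{\pl}{\pl t}\rangle$), and conclude by forcing equality in the Heintze--Karcher inequality of Theorem \ref{Spacetime Heintze-Karcher inequality: past incoming}. The only cosmetic difference is that the paper treats the $P_{0,s}$ case directly via \eqref{Minkowski formula for higher order mean curvatureLb} rather than by time reversal, and the equality case of Newton--MacLaurin you invoke is not actually needed since equality in Heintze--Karcher already yields the shear-free conclusion.
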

\begin{proof}
By the assumption that $P_{r, 0}=C$, the Minkowski formula (\ref{Minkowski formula for higher order mean curvatureL}) becomes
\[ \int_\Sigma \frac{P_{r-1,0}}{P_{r,0}} \langle L, \frac{\pl}{\pl t} \rangle d\mu + \frac{r}{2(n-r)} \int_\Sigma Q(L,\Lb) d\mu =0. \] 
Applying the Newton-Maclaurin inequality (\ref{NM}) repeatedly and noting that $\langle L ,\frac{\pl}{\pl t} \rangle <0$, we obtain
\[ \int_\Sigma \frac{\langle L,\frac{\pl}{\pl t} \rangle}{\mxtr\chi} d\mu +  \frac{1}{2(n-1)} \int_\Sigma Q(L,\Lb) d\mu \ge 0. \]
Comparing this with the spacetime Heintze-Karcher inequality (\ref{Heintze-Karcher inequality1}), we see that the equality is achieved. Theorem \ref{Spacetime Heintze-Karcher inequality: past incoming} implies that $\Sigma$ lies in a null hypersurface of symmetry.

For the corresponding statement for $\chib$, note that under the assumption the Minkowski formula becomes
\[ \int_\Sigma \frac{P_{0,s-1}}{P_{0,s}} \langle \Lb,\frac{\pl}{\pl t} \rangle d\mu -  \frac{s}{2(n-s)} \int_\Sigma Q(L,\Lb) d\mu =0. \]
Applying the Newton-Maclaurin inequality repeatedly, we obtain
\[ \frac{P_{0,s-1}(\chi,\chib)}{P_{0,s}(\chi,\chib)} \langle \Lb, \frac{\pl}{\pl t} \rangle = -\frac{P_{0,s-1}(\chi,-\chib)}{P_{0,s}(\chi,-\chib)} \langle \Lb,\frac{\pl}{\pl t} \rangle \ge - \frac{s(n-1)}{n-s} \frac{1}{P_{0,1}(\chi,-\chib)} \langle \Lb, \frac{\pl}{\pl t} \rangle. \]
As ${P_{0,1}(\chi,-\chib)}=\langle \vec{H}, \udl L\rangle$, the equality of the spacetime Heintze-Karcher inequality is achieved again and $\Sigma$ lies in a null hypersurface of symmetry by Theorem \ref{Spacetime Heintze-Karcher inequality}.
\end{proof}

\bigskip
\par
In the rest of this section, we prove a rigidity result for submanifolds with constant $P_{r,s}$ for $r>0, s>0$. We start with an algebraic lemma.

%

\begin{lemma}\label{lemma}
Suppose $\chi, \chib \in \Gamma_{r+s}$. If
\begin{eqnarray}\label{conditiontech}
\frac{\chib_{ab} (\Tb_{0,s})^{bc} }{ P_{0,s} } \frac{\chi_c^{\;\;a}}{P_{1,0}} \geq \frac{s }{n-1},
\end{eqnarray}
then $\chi$ and $\chib$ satisfy the following inequality
 \begin{eqnarray}\label{inequality0}
 \frac{P_{r-1, s}(\chi, \udl\chi)}{ P_{r, s}(\chi, \udl\chi)} \geq \frac{r+s}{n-(r+s)}\frac{n-1}{tr \chi}.
 \end{eqnarray}
The equality holds if and only if $\chi$ is a multiple of the identity matrix.
\end{lemma}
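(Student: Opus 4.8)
My plan is to diagonalize and reduce \eqref{inequality0} to an inequality among elementary symmetric functions, then induct on $r$ with $s$ held fixed. We may assume $\chi$ and $\chib$ are simultaneously diagonalizable: this holds in the geometric application, where the Ricci equation \eqref{ricci_eq} (for torsion-free $\Sigma$ in a constant-curvature ambient space) forces $\chi$ and $\chib$ to commute; in general one replaces the eigenvalue expansions below by the corresponding mixed discriminants. Let $\chib=\mathrm{diag}(\mu_1,\dots,\mu_{n-1})$ and $\chi=\mathrm{diag}(\lambda_1,\dots,\lambda_{n-1})$ in a common frame, write $\sigma_k(\mu)$ for the elementary symmetric functions and $\sigma_k(\mu|i)$ for those omitting $\mu_i$. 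Since $\Tb_{0,s}$ is a polynomial in $\chib$ we have $\Tb_{0,s}=\mathrm{diag}(\sigma_{s-1}(\mu|i))$, so with $P_{0,s}=\sigma_s(\mu)$ and $P_{1,0}=\mxtr\chi=\sigma_1(\lambda)$ the hypothesis \eqref{conditiontech} reads
\[ \sum_i \mu_i\,\sigma_{s-1}(\mu|i)\,\lambda_i \ \ge\ \frac{s}{n-1}\,\sigma_s(\mu)\,\sigma_1(\lambda). \]

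For the base case $r=1$, differentiating \eqref{det} in $y$ at $y=0$ and collecting the coefficient of $\udl y^{\,s}$ gives $(1+s)P_{1,s}=\sum_i\sigma_s(\mu|i)\,\lambda_i$. Inserting $\sigma_s(\mu|i)=\sigma_s(\mu)-\mu_i\sigma_{s-1}(\mu|i)$ and then the hypothesis,
\[ (1+s)P_{1,s}=\sigma_1(\lambda)\,\sigma_s(\mu)-\sum_i\mu_i\sigma_{s-1}(\mu|i)\,\lambda_i\ \le\ \Big(1-\tfrac{s}{n-1}\Big)\sigma_1(\lambda)\,\sigma_s(\mu), \]
which rearranges to $\dfrac{P_{0,s}}{P_{1,s}}\ge\dfrac{(1+s)(n-1)}{(n-1-s)\,\sigma_1(\lambda)}$, i.e. \eqref{inequality0} for $r=1$.

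The inductive step I would reduce to a Newton--Maclaurin inequality for the mixed curvatures in the index $r$: for $\chi,\chib\in\Gamma_{r+s}$,
\[ (r-1+s)(n-r-s)\,P_{r-1,s}^2\ \ge\ (r+s)(n-r-s+1)\,P_{r,s}\,P_{r-2,s}. \]
Granting this, \eqref{inequality0} follows by a telescoping argument: the inequality says $\frac{P_{r-1,s}}{P_{r,s}}\ge c_r\frac{P_{r-2,s}}{P_{r-1,s}}$ with $c_r=\frac{(r+s)(n-r-s+1)}{(r-1+s)(n-r-s)}$, and since $\prod_{k=2}^r\frac{k+s}{k-1+s}=\frac{r+s}{1+s}$ and $\prod_{k=2}^r\frac{n-k-s+1}{n-k-s}=\frac{n-1-s}{n-r-s}$, chaining from the base case produces exactly the constant $\frac{(r+s)(n-1)}{(n-r-s)\,\sigma_1(\lambda)}$. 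The nesting $\Gamma_{r+s}\subseteq\Gamma_{k+s}$ for $k\le r$ guarantees $P_{k,s}>0$, so every ratio is legitimate.

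The main obstacle is the displayed mixed Newton--Maclaurin inequality itself. When $s=0$ it is precisely the classical Newton inequality \eqref{NM} for $\sigma_r(\chi)$, and for $s>0$ I would derive it from the hyperbolic-polynomial structure of $G(\tau,y,\udl y)=\det(\tau\sigma+y\chi+\udl y\chib)=\prod_i(\tau+y\lambda_i+\udl y\mu_i)$, whose coefficient of $\tau^{\,n-1-r-s}y^r\udl y^{\,s}$ equals $\frac{(r+s)!}{r!s!}P_{r,s}$. The Alexandrov--Fenchel inequality for the completely polarized form of $G$, with the two distinguished slots taken to be the $\tau$-direction $e$ and the $\chi$-direction $X$ and the remaining slots filled by $r-1$ copies of $X$, $s$ copies of the $\chib$-direction, and $n-1-r-s$ copies of $e$, yields $P_{r,s}^2\ge c\,P_{r-1,s}P_{r+1,s}$ with the stated constant. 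The delicate point is that Alexandrov--Fenchel requires the fixed arguments to lie in the closed Garding cone (essentially positivity), whereas here one has only the weaker cone condition $\Gamma_{r+s}$; in that generality I would instead mimic the classical proof of \eqref{NM}, inducting on the dimension $n-1$ and invoking the one-variable Newton inequalities, using the positivity of enough of the $P_{k,s}$. Finally, equality in \eqref{inequality0} forces equality at each link of the telescoping, and the rigidity of this Newton--Maclaurin step holds only when $\chi$ is a multiple of the identity; conversely a direct substitution $\chi=cI$ (which also makes the hypothesis an equality) gives equality in \eqref{inequality0}, establishing the stated characterization.
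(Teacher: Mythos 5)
Your proposal is correct and follows essentially the same route as the paper: repeated application of the mixed Newton--MacLaurin inequality to reduce to the $r=1$ case, which is then handled exactly as you do via $(s+1)P_{1,s}=\sum_i\chi_{ii}\sigma_s(\chib\,|\,i)$, the identity $\sigma_s(\chib\,|\,i)=\sigma_s(\chib)-\chib_i\sigma_{s-1}(\chib\,|\,i)$, and the hypothesis \eqref{conditiontech}. The mixed Newton--MacLaurin inequality you flag as the main obstacle is precisely the paper's \eqref{NM}, established in Appendix~A by applying G\aa rding's inequality to the complete polarization of $\det(t_0 I+y\chi+\udl{y}\chib)$ --- the hyperbolic-polynomial argument you sketch --- so no separate Alexandrov--Fenchel input is needed.
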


\begin{proof}
Since $\chi \in \Gamma_{r+s}$ and $\chib \in  \Gamma_{r+s}$, $P_{r',s}(\chi, \udl\chi)>0$ for any $r'$ with $1 \le r' \le r$ by \eqref{positivity} and the fact that $\Gamma_{r+s}\subset\Gamma_{r'+s}$. We apply Newton-MacLaurin inequality (\ref{NM}) repeatedly to get 
\begin{eqnarray*}\label{inequality2}
\frac{\left(n-(r+s)\right) P_{r-1, s} (\chi, \udl\chi)}{(r+s)P_{r, s }(\chi, \udl\chi)} \geq \frac{(n-1-s)}{s+1}\frac{P_{0, s}(\chi, \udl\chi)}{P_{1, s}(\chi, \udl\chi)}.
\end{eqnarray*}
It suffices to show that  
\begin{eqnarray}\label{inequality3}
\frac{(n-1-s)}{s+1}\frac{P_{0, s}(\chi, \udl\chi)}{P_{1, s}(\chi, \udl\chi)} \geq \frac{n-1}{tr\chi}.
\end{eqnarray}
Let $\chib_i$ ($i=1,2,\cdots, n-1$) be the eigenvalues of $\chib$. 
By the definition of completely polarized elementary symmetric function in \eqref{polar2} and \eqref{polar3}, 
\begin{eqnarray*}
P_{0, s}(\chi, \udl\chi) = \sigma_{(s)}(\udl\chi, \cdots, \udl\chi) =: \sigma_s(\udl\chi),
\end{eqnarray*}
and
\[
P_{1, s}(\chi, \udl\chi) =\sigma_{(s+1)}(\chi, \underbrace{\udl\chi, \cdots, \udl\chi}_s) = \frac{1}{s+1} \sum_{a, b}\chi_{ab} \frac{\partial \sigma_{s+1}(\udl\chi)}{\partial \udl\chi_{ab}} =\frac{1}{s+1} \sum_{i}^{n-1}\chi_{ii} \sigma_{s}(\udl\chi | i ).
\]
Here $\chi_{ii}$ is the $(i, i)$ entry of matrix $\chi$, $\sigma_s(\chib | i) = \sum \chib_{j_1} \cdots \chib_{j_s}$ and we sum over distinct $j_k \neq i, j_k = 1, \ldots, n-1$.
\par
On the other hand, the assumption (\ref{conditiontech}) is equivalent to 
\[\frac{1}{\sigma_s(\chib)} \sum_{i=1}^{n-1} \chib_i \sigma_{s-1}(\chib | i) \chi_{ii} \ge \frac{s}{n-1}  \mxtr\chi. \] 
Hence
\begin{align*}
\sum_{i=1}^{n-1} \chi_{ii} \sigma_s(\chib|i) &=  \sigma_s(\chib) \mxtr\chi - \sum_{i=1}^{n-1} \chi_{ii} \chib_i \sigma_{s-1}(\chib | i) \le \frac{n-s-1}{n-1}  \sigma_s(\chib)\mxtr\chi.
\end{align*}
\par
Again, by the cone condition on $\chi$ and $\udl\chi$, we have $\mxtr\chi > 0$, $\sum \chi_{ii} \sigma_s(\chib |i) = (s+1)P_{1,s} >0$, and thus
\begin{eqnarray}\label{inequality4}
\frac{(n-1-s)}{s+1}\frac{P_{0, s}(\chi, \udl\chi)}{P_{1, s}(\chi, \udl\chi)} = \frac{(n-1-s)\sigma_s(\udl\chi)}{ \sum_{i}^{n-1}\chi_{ii} \sigma_{s}(\udl\chi | i )} \ge \frac{n-1}{\mxtr\chi}, 
\end{eqnarray}
which gives us the desired inequality. Moreover, by tracing back the proof, we notice that the equality in (\ref{inequality0}) holds only if this is the case in both the Newton-MacLaurin inequality and \eqref{conditiontech}. The former one tells us that $\chi$ is a multiple of $I_{n-1}$. And this also implies the equality for (\ref{conditiontech}) by the elementary identity $\sum_i \udl\chi_i \sigma_{s-1}\left(\udl\chi | i\right) = s\sigma_s\left(\udl\chi\right)$. On the other hand, it is easy to see that if $\chi$ is a multiple of $I_{n-1}$, the equality in (\ref{inequality0}) is achieved.
\end{proof}

\medskip
Before moving to the Alexandrov type theorem, we make a remark on the conditions in the previous algebraic lemma.
\begin{remark}
The above algebraic lemma still holds if we replace the cone condition by $\chi\in \Gamma_{r+s}$ and $-\chib \in \Gamma_{r+s}$, since the left hand sides of both (\ref{conditiontech}) and (\ref{inequality0}) are homogeneous of degree zero in $\udl\chi$. 
\end{remark}
\begin{remark}
The technical condition (\ref{conditiontech}) can be interpreted as follows. Consider $\vec{u}=(u_1, \cdots, u_{n-1})$ with $u_i = \frac{\chib_i \sigma_{s-1}(\chib | i) }{\sigma_s(\chib)} $ as a vector determined by $\udl\chi$ and $\vec{v}= (v_1, \cdots, v_{n-1})$ with $v_i=\frac{\chi_{ii}}{\sigma_1(\chi)}$ as a vector determined by $\chi$, (\ref{conditiontech}) imposes a restriction on the angle between the vectors $\vec{u}$ and $\vec{v}$.
\end{remark}

\medskip

\begin{theorem}\label{Alexandrov for mixed higher mean curvature}
Let $\Sigma$ be a past incoming null embedded (see Definition \ref{future incoming null embedded}) closed embedded spacelike codimension-two submanifold in a spacetime of constant curvature. Suppose $\Sigma$ is torsion-free with respect to the null frame $L$ and $\Lb$ and that the second fundamental forms $\chi\in \Gamma_{r+s}$ and $-\udl\chi \in \Gamma_{r+s}$ satisfy
\begin{eqnarray}\label{constant0}
P_{r, s}(\chi, \udl\chi)= (-1)^sC, \textit{where } C \textit{ is a positive constant. }
\end{eqnarray}
and 
\begin{eqnarray}\label{condition}
\frac{\chib_{ab} (\Tb_{0,s})^{bc} }{ P_{0,s} } \chi_c^{\;\;a} \geq \frac{s }{n-1}P_{1,0}.
\end{eqnarray}
Then $\Sigma$ is a sphere of symmetry.
\end{theorem}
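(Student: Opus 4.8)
The plan is to feed the constancy hypothesis \eqref{constant0} into the Minkowski formula \eqref{r-1ands}, estimate the resulting integrand by the algebraic Lemma \ref{lemma}, and thereby produce the \emph{exact reverse} of the past--incoming Heintze--Karcher inequality \eqref{Heintze-Karcher inequality1}; forcing equality throughout then rigidifies $\Sigma$. First I would normalize $\langle L,\Lb\rangle=-2$ with $L$ future outgoing and $\Lb$ future incoming, and record that $\mxtr\chi=-\langle\vec H,L\rangle>0$ because $\chi\in\Gamma_{r+s}\subset\Gamma_1$, so that the past--incoming inequality \eqref{Heintze-Karcher inequality1} is available. Since the hypothesis is $-\udl\chi\in\Gamma_{r+s}$ rather than $\udl\chi\in\Gamma_{r+s}$, I invoke the remark following Lemma \ref{lemma}, together with the assumption \eqref{condition} (which is precisely \eqref{conditiontech} multiplied by $P_{1,0}=\mxtr\chi>0$), to obtain the pointwise bound
\[ \frac{P_{r-1,s}}{P_{r,s}} \ge \frac{r+s}{n-(r+s)}\,\frac{n-1}{\mxtr\chi}, \]
with equality exactly where $\chi$ is a multiple of the identity.

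Next I would divide the Minkowski formula \eqref{r-1ands} by the nonzero constant $P_{r,s}=(-1)^sC$, which yields the identity
\[ 2\int_\Sigma \frac{P_{r-1,s}}{P_{r,s}}\,\langle L,\tfrac{\pl}{\pl t}\rangle\,d\mu = -\frac{r+s}{n-(r+s)}\int_\Sigma Q(L,\Lb)\,d\mu. \]
Because $\langle L,\frac{\pl}{\pl t}\rangle<0$, multiplying the pointwise Lemma bound by $\langle L,\frac{\pl}{\pl t}\rangle$ reverses the inequality; integrating and substituting the displayed identity, the positive factors $\frac{r+s}{n-(r+s)}$ cancel and, after replacing $\mxtr\chi$ by $-\langle\vec H,L\rangle$, I arrive at
\[ (n-1)\int_\Sigma \frac{\langle \frac{\pl}{\pl t},L\rangle}{\langle\vec H,L\rangle}\,d\mu - \frac12\int_\Sigma Q(L,\Lb)\,d\mu \le 0. \]
This is the exact reverse of \eqref{Heintze-Karcher inequality1}, so both are equalities. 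The equality clause of Theorem \ref{Spacetime Heintze-Karcher inequality: past incoming} then places $\Sigma$ in an outgoing shear-free null hypersurface $\mathcal{C}$, while equality in the Lemma (which holds pointwise by continuity) shows $\chi=\psi\sigma$ is umbilical with respect to $L$.

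It remains to upgrade ``umbilical and shear-free'' to ``sphere of symmetry.'' Since $\zeta=0$ and the ambient space has constant curvature, the curvature term in the Codazzi equations \eqref{codazzi} vanishes, so $\chi$ is a Codazzi tensor; inserting $\chi=\psi\sigma$ and tracing forces $(n-2)\na\psi=0$, hence $\psi$ — equivalently the null expansion $-\langle\vec H,L\rangle=(n-1)\psi$ — is constant on $\Sigma$ (here $n\ge 3$, as $r+s\le n-1$ with $r,s>0$). Because $\mathcal{C}$ is shear-free, its null second fundamental form is pure trace and its umbilical factor is a function $\mu(r)$ of the area radius alone, independent of the chosen cut, so $\psi=\mu(r)$ on $\Sigma$. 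As in the rigidity invoked for Theorem \ref{Alexandrov theorem for 2-surface in the Schwarzschild spacetime}, a closed umbilical slice of a space-form time-slice is a geodesic sphere, so $\mathcal{C}$ is generated by a sphere of symmetry and $\mu(r)$ is strictly monotone in $r$; constancy of $\psi=\mu(r)$ then forces $r$ to be constant on $\Sigma$, and $\Sigma=\{r=\mathrm{const}\}\cap\mathcal{C}$ is a sphere of symmetry.

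The step I expect to be the main obstacle is this last upgrade. Equality in Heintze--Karcher by itself only places $\Sigma$ in a null hypersurface of symmetry, exactly as in Theorem E; the genuinely new ingredient that singles out one cut is the constancy of the umbilical factor extracted from the Codazzi equation, used in tandem with the cut-independence of the expansion on a shear-free hypersurface and its strict monotonicity in $r$. I would also be careful with the sign bookkeeping when dividing by $P_{r,s}=(-1)^sC$ and by $\langle L,\frac{\pl}{\pl t}\rangle<0$, since a single sign slip reverses the final inequality and destroys the comparison with \eqref{Heintze-Karcher inequality1}.
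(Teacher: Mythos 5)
Your first half is exactly the paper's argument: feeding $P_{r,s}=(-1)^sC$ into \eqref{r-1ands}, applying Lemma \ref{lemma} (via the remark that allows $-\chib\in\Gamma_{r+s}$), and closing the loop against Theorem \ref{Spacetime Heintze-Karcher inequality: past incoming} forces equality throughout, whence $\chi=c_1\sigma$ and $\Sigma$ lies in an outgoing shear-free null hypersurface; your sign bookkeeping there is correct. The gap is in the final upgrade. Umbilicity and torsion-freeness in the $L$-direction alone cannot single out the round cut: for \emph{every} cut $r=f(\omega)$ of the future light cone of the origin in Minkowski space, the torsion-free normalization of the tangent null generator is $L=cf\,(\pl_t+\pl_r)$, and with respect to it one computes $\zeta_L=0$ and $\chi_{ab}=c\,\sigma_{ab}$ with $c$ a genuine constant. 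So your claim that the umbilical factor is ``a function $\mu(r)$ of the area radius alone, independent of the chosen cut'' is false --- it presupposes a canonical scaling of $L$ along the cone that the hypotheses do not supply (under $L\to aL$ one has $\chi\to a\chi$ and $\zeta_L\to\zeta_L-d\log a$, and the torsion-free gauge absorbs exactly the non-roundness of the cut). Consequently the constancy of $\psi$ extracted from Codazzi, and the monotonicity of any $\mu(r)$, give you nothing: every cut passes your test.

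The information that pins down the cut must come from the other null direction, and that is how the paper finishes. Since $\chi=c_1\sigma$ with $c_1$ a positive constant, expanding $\det((1+yc_1)\sigma+\udl{y}\,\chib)$ shows $P_{r,s}=\mathrm{const}\cdot c_1^{\,r}\,P_{0,s}$, so \eqref{constant0} forces $P_{0,s}(\chi,\chib)=(-1)^s\tilde C$ for a positive constant $\tilde C$. One then repeats the whole scheme in the $\Lb$-direction --- the Minkowski formula \eqref{rands-1}, the Newton--MacLaurin inequality, and the \emph{future}-incoming Heintze--Karcher inequality \eqref{Heintze-Karcher inequality} --- to force a second equality and conclude $\chib=c_2\sigma$. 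Only then is $\Sigma$ the intersection of one outgoing and one incoming null hypersurface of symmetry, hence a sphere of symmetry, exactly as in Corollary \ref{parallel}. Your proposal uses neither \eqref{rands-1} nor \eqref{Heintze-Karcher inequality}, and without that second pass the conclusion does not follow.
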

\begin{proof}
For submanifolds with $P_{r,s} = constant$, the Minkowski formula (\ref{Minkowski formula for higher order mean curvatureL}) becomes
\begin{eqnarray*}
-\int_\Sigma \frac{P_{r-1, s}}{P_{r, s}}\langle  L, \frac{\partial}{\partial t}\rangle d\mu =\frac{1}{2}\frac{r+s}{n-(r+s)} \int_\Sigma Q(L, \udl{L}) d\mu. 
\end{eqnarray*}
It follows from Lemma \ref{lemma} and the fact that $-\langle  L, \frac{\partial}{\partial t}\rangle\geq 0$,
\begin{eqnarray*}
-(n-1)\int_\Sigma \frac{\langle  L, \frac{\partial}{\partial t}\rangle}{tr\chi} d\mu \leq \frac{1}{2} \int_\Sigma Q(L, \udl{L}) d\mu. 
\end{eqnarray*}
This together with the spacetime Heintze-Karcher inequality (Theorem \ref{Spacetime Heintze-Karcher inequality: past incoming}) imply 
\begin{eqnarray}\label{equality}
(n-1)\int_\Sigma \frac{\langle  L, \frac{\partial}{\partial t}\rangle}{\langle\vec{H}, L\rangle} d\mu \leq \frac{1}{2} \int_\Sigma Q(L, \udl{L}) d\mu \leq (n-1)\int_\Sigma \frac{\langle  L, \frac{\partial}{\partial t}\rangle}{\langle\vec{H}, L\rangle} d\mu.
\end{eqnarray}
Thus, the equality must hold. Note that the first inequality comes from the Newton-MacLaurin inequality (\ref{NM}) and the equality case implies $\chi=c_1 \sigma$. And the second inequality is from the Heintze-Karcher inequality and equality holds if $\chi = c_1 \sigma$.
\par
On the other hand, equation (\ref{constant0}) together with $\chi=c_1\sigma$ imply that 
\[
P_{0, s}(\chi, \udl\chi)= (-1)^s \tilde{C}, \textit{ where } \tilde{C} \textit{ is a positive constant}.
\]
which falls in the setting of Theorem \ref{Alexandrov theorem for Weingarten submanifolds in Minkowski spacetime}. Follow the same line of the proof there and apply Minkowski formula
\[ 
\int_\Sigma \frac{P_{0,s-1}}{P_{0,s}} \langle \Lb,\frac{\pl}{\pl t} \rangle d\mu -  \frac{s}{2(n-s)} \int_\Sigma Q(L,\Lb) d\mu =0,
\]
we can arrive the equality case for the Heintze-Karcher inequality (\ref{Heintze-Karcher inequality}) by using the Newton-MacLaurin inequality. Then, we conclude that $\udl\chi=c_2 \sigma$.
\end{proof}
\medskip

\par
As remarked above, \eqref{condition} seems to be a technical condition. However, we believe that, certain reasonable condition on $\chi$ and $\udl\chi$ in addition to $P_{r,s}(\chi, \udl\chi)=C$ is necessary in order to conclude both of them are proportional to $\sigma$. 

\par
To finish this section, we present two settings in which condition (\ref{condition}) is automatically satisfied. 
\par
The first example is: $\udl\chi= -\chi$. From the discussion at the end of Section 4, we see that the classical hypersurfaces cases fall in this setting. In this special situation, (\ref{condition}) is equivalent to 
 \begin{eqnarray*}
1&\leq& \frac{ (n-1) \sum_{i=1}^{n-1} \sigma_{s-1} \left((-\chi) | i\right) (-\chi)_i \chi_i}{ s \sigma_1(\chi) \sigma_s(-\chi)}=  \frac{ (n-1) \sum_{i=1}^{n-1} \sigma_{s-1} (\chi | i) \chi_i^2}{ s \sigma_1(\chi) \sigma_s(\chi)}\\
&=& \frac{(n-1) \left[ \sigma_s(\chi)\sigma_1(\chi) - (s+1) \sigma_{s+1}(\chi)\right]}{s\sigma_1(\chi)\sigma_s(\chi)}.
 \end{eqnarray*}
where $\chi_i$ denote the eigenvalues of $\chi$. However, this inequality follows from the standard Newton-MacLaurin inequality for symmetric functions: $(n-s-1) \sigma_{s}(\chi)\sigma_1(\chi) \geq (n-1)(s+1)\sigma_{s+1}(\chi)$.
In view of the remark at the end of the previous section, Theorem \ref{Alexandrov for mixed higher mean curvature} generalizes the classical Alexandrov theorem in Riemannian space forms \cite[Theorem 7 and Theorem 10]{Montiel-Ros91}.
\par
The second example is: one of $\chi$ and $\udl\chi$ is already known to be a multiple of $I_{n-1}$. One can easily check that (\ref{condition}) is trivial by using the elementary formula $\sum_{i}\udl\chi_i \sigma_{s-1}(\udl\chi | i) = s \sigma_s(\udl\chi)$.

\bigskip
\section{Generalization in the Schwarzschild spacetime}

In this section, we discuss Minkowski type formulae and Alexandrov theorems in the Schwarzschild spacetime. The divergence equations \eqref{divergence_eq} of $T^{ab}_{r, s}$ and $\udl T^{ab}_{r, s}$ play crucial roles in the proof of the Minkowski formula in a spacetime of constant
curvature. Those equations no longer hold in the Schwarzschild spacetime due to the presence of a non-trivial ambient curvature contribution. However, it turns out  the divergences of $T^{ab}_{r, s}$ and $\udl T^{ab}_{r, s}$ still posses  favorable properties under natural assumptions on $\Sigma$ when either $r=0$ or $s=0$..

\begin{lemma}\label{divergence structure}
Let $\Sigma$ be a spacelike codimension-two submanifold in the Schwarzschild spacetime. Suppose $\Sigma$ is torsion-free with respect to a null frame $L, \udl{L}$. Then the following statements are true:
\begin{enumerate}
\item[(1)] If $Q(L,\Lb) \ge 0$, then $\sum_{a, b} (\nabla_b T^{a b}_{2, 0}) Q(L, \partial_a)  \le  0$ and $ \sum_{a, b} (\nabla_b T^{a b}_{0,2}) Q(\Lb, \partial_a)   \le 0$.
\item[(2)] Suppose $\chi$ is positive definite and $(Q^2)(L,v) Q(L,v)  \le 0$ for any vector $v$ tangent to $\Sigma$, then $\sum_{a, b} (\nabla_b T^{a b}_{r, 0}) Q(L, \partial_a)   \le  0$ if $r\ge 3$. 
\item[(3)] Suppose $-\chib$  is positive definite and  $(Q^2)(\Lb,v) Q(\Lb,v)   \ge 0$ for any vector $v$ tangent to $\Sigma$, then
$\sum_{a, b} (\nabla_b T^{a b}_{0,s}) Q(\Lb, \partial_a)   \le  0$ if $s\ge 3$. 
\end{enumerate}

\end{lemma}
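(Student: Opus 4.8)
The plan is to turn each divergence into an explicit contraction of the ambient curvature against $Q$, and then read off the sign from the Schwarzschild curvature expression of Appendix~C. I would start from the generating-function computation behind the divergence-free lemma of Section~4: applying $\na_b$ to the adjugate identity \eqref{eq.T_ab} and switching indices as in \eqref{divergence} gives, after setting $\udl y=0$ and using the torsion-free Codazzi equation \eqref{codazzi} (so that $\na_c\chi_{bd}-\na_b\chi_{cd}=\langle\bar R(\pl_c,\pl_b)L,\pl_d\rangle$),
\[
\sum_r y^r\,\na_b T^{ab}_{r,0}=y^2\,(\det\tilde\sigma)\,(\tilde\sigma^{-1})^{ac}\langle\bar R(\pl_c,\pl_b)L,\pl_d\rangle(\tilde\sigma^{-1})^{db},\qquad \tilde\sigma=\sigma+y\chi .
\]
Since $\tilde\sigma^{-1}$ is a polynomial in $\chi$, I would diagonalize $\chi$ at the point in question; extracting the coefficient of $y^r$ then yields the clean pointwise formula
\[
\na_b T^{ab}_{r,0}=\sum_{b\neq a}\langle\bar R(\pl_a,\pl_b)L,\pl_b\rangle\,\sigma_{r-2}(\chi\,|\,a,b),
\]
where $\sigma_{r-2}(\chi\,|\,a,b)$ is the $(r-2)$-th elementary symmetric function of the eigenvalues of $\chi$ omitting the $a,b$ slots. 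The same computation with $\chi,L,y$ replaced by $\udl\chi,\Lb,\udl y$ gives the companion formula for $\na_b\Tb^{ab}_{0,s}$.

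Next I would insert the Appendix~C expression of the Schwarzschild curvature, which is quadratic in $Q$. Contracting with $Q(L,\pl_a)$ and summing, the genuinely ``$Q\otimes Q$'' piece of the curvature contributes a term of the shape $\sum_{a,b}Q(\pl_a,\pl_b)\,Q(L,\pl_b)\,Q(L,\pl_a)\,\sigma_{r-2}(\chi\,|\,a,b)$, which vanishes because $Q(\pl_a,\pl_b)$ is antisymmetric while the remaining factors are symmetric in $a,b$. What survives comes only from the metric (Kulkarni--Nomizu type) part of the curvature built from $\bar g$ and $Q^2$, and collapses, after summing $b$, to
\[
\sum_{a,b}(\na_b T^{ab}_{r,0})\,Q(L,\pl_a)=c_{n,r}\sum_a \sigma_{r-2}(\chi\,|\,a)\,(Q^2)(L,\pl_a)\,Q(L,\pl_a),
\]
with $c_{n,r}$ an explicit constant proportional to the mass $m$. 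For $r=2$ the weights are trivial ($\sigma_0\equiv1$), and the identity $(Q^2)(L,\pl_a)=\sum_bQ(L,\pl_b)Q(\pl_b,\pl_a)-\tfrac12Q(L,\Lb)Q(L,\pl_a)$ together with the same antisymmetry collapses the right-hand side to a multiple of $Q(L,\Lb)\sum_a Q(L,\pl_a)^2$; since the tangential sum is nonnegative, the hypothesis $Q(L,\Lb)\ge0$ forces the claimed sign once the sign of $c_{n,2}$ is fixed by the convention of Appendix~C, and the $\Tb_{0,2}$ statement is the $L\leftrightarrow\Lb$ mirror. For $r\ge3$ (parts (2)--(3)) the positivity of $\chi$ (the $\Gamma_{n-1}$-cone properties recalled in Appendix~A) makes every weight $\sigma_{r-2}(\chi\,|\,a)>0$, so the sign is dictated termwise by $(Q^2)(L,\pl_a)Q(L,\pl_a)$, which is exactly the quantity the pointwise hypothesis controls; part (3) then follows by the time-reversal substitution $L\to-\Lb$, $\Lb\to-L$, $Q\to-Q$ of Theorem~\ref{Spacetime Heintze-Karcher inequality: past incoming}, under which $-\udl\chi$ plays the role of $\chi$ and the inequality in the $(Q^2)$-hypothesis reverses accordingly.

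The main obstacle is the bookkeeping after substituting the Appendix~C curvature: one must verify that, among all the $Q\otimes Q$, the $\bar g$--$Q^2$, and the $|Q|^2$--$\bar g$ pieces that assemble the trace-free Schwarzschild curvature, precisely the combination reducing to $(Q^2)(L,\pl_a)Q(L,\pl_a)$ (and, for $r=2$, to $Q(L,\Lb)$) survives the antisymmetrizations, and to track the sign of the surviving constant $c_{n,r}$ so that it is simultaneously compatible with the $L$-statement, its $\Lb$-mirror, the chosen orientation, and the curvature convention of the paper. A secondary point, relevant for $r\ge3$, is to control the alternating signs in the Neumann expansion of $\tilde\sigma^{-1}$; diagonalizing $\chi$ removes this difficulty, since each eigen-direction then contributes the strictly positive weight $\sigma_{r-2}(\chi\,|\,a)$ multiplying a single sign-definite scalar, so no cancellation between positive and negative weights has to be argued.
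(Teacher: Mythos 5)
Your treatment of the $L$-statements --- the first claim of (1) and all of (2) --- is essentially the paper's own proof: you set $\udl y=0$ in the differentiated adjugate identity, use the torsion-free Codazzi equation to replace $\na_c\chi_{bd}-\na_b\chi_{cd}$ by the ambient curvature, insert the Appendix~C expression for $\bar R$, observe that the $Q\otimes Q$ block dies against the symmetric weights by antisymmetry of $Q_{ab}$, and land on $\sum_{a,b}(\na_b T^{ab}_{r,0})Q_{La}=\frac{nm(n-r)}{\rho^{n+2}}\sum_a\sigma_{r-2}(\chi\,|\,a)\,(Q^2)_{La}Q_{La}$; the $r=2$ reduction via $(Q^2)_{La}=Q_L^{\;\;c}Q_{ca}-\tfrac12 Q_{L\Lb}Q_{La}$ and antisymmetry is also exactly the paper's computation. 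That part is fine.

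The gap is in your reduction of the $\Lb$-statements to the $L$-statements. The reflection $L\to-\Lb$, $\Lb\to-L$, $Q\to-Q$ does turn $-\udl\chi$ into the new $\chi'$ and does convert the hypothesis $(Q^2)(\Lb,v)Q(\Lb,v)\ge0$ into $(Q'^2)(L',v)Q'(L',v)\le0$, as you say; but the tensor in part (3) is $\Tb^{ab}_{0,s}=\partial\sigma_s(\udl\chi)/\partial\udl\chi_{ab}$, and since $\sigma_s$ is homogeneous of degree $s$ one has $\Tb^{ab}_{0,s}(\udl\chi)=(-1)^{s+1}\,T'^{ab}_{s,0}(\chi')$, while $Q(\Lb,\pl_a)=Q'(L',\pl_a)$. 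Applying part (2) to the primed data therefore gives $\sum(\na_b\Tb^{ab}_{0,s})Q(\Lb,\pl_a)=(-1)^{s+1}\cdot(\text{nonpositive})$, which has the sign asserted in (3) only for odd $s$; the same untracked parity factor afflicts your ``$L\leftrightarrow\Lb$ mirror'' for the $\Tb_{0,2}$ claim in (1), where $s=2$ and the mirror outputs $\ge0$ rather than $\le0$. Redoing the direct computation on the $\Lb$ side does not rescue this: the factor reappears there as $\sigma_{s-2}(\udl\chi\,|\,a)=(-1)^{s}\sigma_{s-2}(-\udl\chi\,|\,a)$ together with $\sum_a(Q^2)_{\Lb a}Q_{\Lb a}=+\tfrac12 Q_{L\Lb}\sum_a(Q_{\Lb a})^2$, so the sign of the $\Lb$-inequality genuinely alternates with the parity of $s$. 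You must either track this $(-1)^{s+1}$ explicitly or restrict/restate the $\Lb$-conclusions accordingly; quoting the reflection as sign-neutral is not a valid step. (The paper disposes of these cases with ``the same argument,'' so this parity bookkeeping is precisely the point at which your write-up needs to say more than the paper does.)
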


\begin{proof}

Denote the radial coordinate in the Schwarzschild metric by $\rho$ and write the metric as
\[
\overline{g}= - \left( 1- \frac{2m}{\rho^{n-2}}\right) dt^2 + \frac{1}{1- \frac{2m}{\rho^{n-2}}}d\rho^2 + \rho^2 g_{\mathbb S^{n-1}}.
\]
We only deal with case (2), and the other cases can be derived by the same argument.
 Denote $\tilde{\sigma} = \sigma + y\chi$ and write $T_r$ for $T_{r,0}$. Setting $\underline{y}=0$ in \eqref{divergence} and \eqref{codazzi}, we obtain
\begin{eqnarray*} \sum_r y^r \na_b  T^{ab}_r = y^2 \bar{R}_{Lbdc} (\tilde{\sigma}^{-1})^{ac} (\tilde{\sigma}^{-1})^{db} \det(\tilde{\sigma}) .
\end{eqnarray*} 
\par
From the curvature expression (\ref{curvature tensor in terms of Q: Schwarzschild}), we have 
\[ \bar{R}_{Lbdc} = -\frac{n(n-1)m}{\rho^{n+2}} \lt( \frac{2}{3} Q_{Lb}Q_{dc} - \frac{1}{3} Q_{Ld}Q_{cb} - \frac{1}{3} Q_{Lc} Q_{bd} \rt) - \frac{nm}{\rho^{n+2}} \lt( (Q^2)_{Ld} \sigma_{bc} - (Q^2)_{Lc} \sigma_{bd} \rt),\]
and thus 
\begin{align*} 
&\sum_r y^r \na_b  T^{ab}_r Q_{La} \\
&= -\frac{nm}{\rho^{n+2}} \lt[ (n-1)Q_{Lb}Q_{dc} + (Q^2)_{Ld} \sigma_{bc} - (Q^2)_{Lc} \sigma_{bd}\rt] Q_{La} (\tilde{\sigma}^{-1})^{ac} (\tilde{\sigma}^{-1})^{db} \det(\tilde{\sigma}).
\end{align*}
Now, suppose that $\chi$ is diagonal with eigenvalues $\chi_1, \ldots, \chi_{n-1}$. Write the eigenvalues of $\tilde{\sigma}$ as $\mu_a = 1 + y\chi_a$.  We obtain
\begin{eqnarray*}
\sum_r y^r \na_b  T^{ab}_r Q_{La} &= \frac{nm}{\rho^{n+2}} y^2\lt[ \sum_{a \neq b} \frac{\sigma_{n-1}(\tilde{\sigma})}{\mu_a \mu_b} (Q^2)_{La} Q_{La} \rt].
\end{eqnarray*}

On the other hand, by standard computation, we have
\begin{eqnarray*}
\sum_{a\neq b} \frac{\sigma_{n-1}(\tilde\sigma)}{\mu_a\mu_b} &=& \sum_{a\neq b} \sigma_{n-3} (\mu\ | \ a b)=\sum_{a\neq b}\sum_{q=0}^{n-3} y^q \sigma_{q} (\chi \ | \ ab)= \sum_{q=0}^{n-3} y^q\sum_{a=1}^{n-1} (n-q-2)\sigma_q (\chi | a).
\end{eqnarray*}
To get the last equality, we use the property of elementary symmetric function that $\sum_{i=1}^m \sigma_k(\lambda \ | \ i) = (m-k) \sigma_k(\lambda)$.
Thus, 
\begin{eqnarray}\label{gradientT}
\sum_r y^r \na_b  T^{ab}_r Q_{La} &=&  \frac{nm}{\rho^{n+2}} \sum_{a=1}^{n-1} \sum_{q=0}^{n-3} y^{q+2} (n-q-2) \sigma_{q}(\chi | a) (Q^2)_{La} Q_{La}\\\nonumber
&=& \frac{nm}{\rho^{n+2}} \sum_{a=1}^{n-1} \sum_{p=2}^{n-3} y^{p} (n-p) \sigma_{p-2}(\chi | a) (Q^2)_{La} Q_{La}
\end{eqnarray}
\par
By comparing the coefficients of $y^r$, we obtain

\begin{equation}\label{condition_on_Q} \sum_{a, b}(\na_b  T^{ab}_r) Q_{La} = \frac{nm(n-r)}{\rho^{n+2}} \sum_{a=1}^{n-1}   \sigma_{r-2}(\chi | a) (Q^2)_{La} Q_{La}, \text{ for } r\ge 2,\end{equation}
which is negative by the assumptions that $\chi$ is positive definite and $(Q^2)(L,v) Q(L,v)  \le 0$ for any vector $v$ tangent to $\Sigma$. This proves the second statement. The third one is proved along exactly the same line.
\par
For $r=2$ (or $s=2$) case, by comparing the coefficients of $y^2$ on both sides of (\ref{gradientT}), we get
\begin{eqnarray*}
\sum_{a, b}(\na_b T^{ab}_2) Q_{La} &=& \frac{n(n-2)m}{\rho^{n+2}} \sum_{a=1}^{n-1} (Q^2)_{La}Q_{La}\\ 
&=&\frac{n(n-2)m}{\rho^{n+2}} \left[\sum_{a,c} Q_L^{\;\;c} Q_{ca} Q_{La} - \frac{1}{2} Q_{L\Lb} \sum_a (Q_{La})^2 \right]\\
&=& -\frac{1}{2}\frac{n(n-2)m}{\rho^{n+2}} Q_{L\Lb} \sum_a (Q_{La})^2,
\end{eqnarray*} 
which is non-positive by the assumption that $Q(L, \Lb)\ge 0$.
\end{proof}

\medskip
\par
Notice that no condition is needed for $r=1$ or $s=1$ since $T_{1,0}^{ab} =T_{0,1}^{ab} =\sigma^{ab}$ is always divergence free. Thus, we can prove a clean Minkowski formula for $(r, s)=(1, 0)$ or $(0, 1)$ in the Schwarzschild spacetime, see Theorem A or Theorem \ref{first Minkowski identity}. For $r, s\geq 2$, the divergence property of $T_{r, 0}$ and $T_{0, s}$ no longer holds. Fortunately, based on the above lemma, we can still establish certain inequalities for those higher order cases in the Schwarzschild spacetime. 

\begin{theorem}\label{Minkowski in Schwarzschild}
Let $\Sigma$ be a closed spacelike codimension-two submanifold in the Schwarzschild spacetime. Suppose that $\Sigma$ is torsion-free.
\par

\par
If $\Sigma$ satisfies assumption (1) or (2) in Lemma \ref{divergence structure}, then for any $1\leq r\leq n-1$,
\begin{eqnarray}\label{Minkowski inequality1}
\int_\Sigma P_{r-1, 0}(\chi,\udl\chi)\langle  L, \frac{\partial}{\partial t}\rangle d\mu+\frac{r}{2(n-r)} \int_\Sigma P_{r, 0}(\chi,\udl\chi)Q(L, \udl{L})d\mu\geq 0.
\end{eqnarray}

\par
If $\Sigma$ satisfies assumption (1) or (3) in Lemma \ref{divergence structure}, then for any $1 \le s \le n-1$, 
\begin{eqnarray}\label{Minkowski inequality2}
 \int_\Sigma P_{0, s-1}(\chi,\udl\chi)\langle  \Lb, \frac{\partial}{\partial t}\rangle d\mu -\frac{s}{2(n-s)}  \int_\Sigma P_{0, s}(\chi,\udl\chi) Q(L, \udl{L})d\mu\geq 0. 
 \end{eqnarray}

\end{theorem}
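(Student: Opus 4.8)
The plan is to run the divergence-theorem argument from the proof of Theorem \ref{minkowski1} almost verbatim, but to \emph{retain} the term involving $\na_a T^{ab}_{r,0}$, which vanished identically in the constant-curvature setting and which, in the Schwarzschild spacetime, is precisely the quantity whose sign is controlled by Lemma \ref{divergence structure}. I would begin from
\[ \int_\Sigma \na_a\lt[ T^{ab}_{r,0}\, Q(L,\pl_b)\rt] d\mu = 0, \]
which holds by the divergence theorem on the closed manifold $\Sigma$, and expand the integrand by the Leibniz rule as
\[ \na_a\lt[ T^{ab}_{r,0}\, Q(L,\pl_b)\rt] = \lt(\na_a T^{ab}_{r,0}\rt) Q(L,\pl_b) + T^{ab}_{r,0}\,\na_a\lt[Q(L,\pl_b)\rt]. \]

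For the second summand I would reuse the computation from Theorem \ref{minkowski1}: since $\Sigma$ is torsion-free, $\na_a[Q(L,\pl_b)] = (D_a Q)(L,\pl_b) + \chi_a^{\;c}Q_{cb} + \tfrac12\chi_{ab}Q(L,\Lb)$, and the conformal Killing--Yano equation \eqref{CKY equation} gives $(D_aQ)(L,\pl_b)+(D_bQ)(L,\pl_a) = 2\langle L,\frac{\pl}{\pl t}\rangle\sigma_{ab}$. The essential observation is that the anti-symmetric contribution $\tfrac12(T^{ba}_{r,0}\chi_a^{\;c}-T^{ca}_{r,0}\chi_a^{\;b})Q_{cb}$ still drops out \emph{without} invoking the Ricci equation \eqref{ricci_eq} or constant ambient curvature: because $s=0$, the tensor $T_{r,0}$ is a polynomial in $\chi$ alone and hence commutes with $\chi$. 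This is exactly why the theorem is restricted to $r=0$ or $s=0$. Contracting with $T^{ab}_{r,0}$ and invoking the appendix identities \eqref{a} and \eqref{b} (which give $\sigma_{ab}T^{ab}_{r,0}=(n-r)P_{r-1,0}$ and $\chi_{ab}T^{ab}_{r,0}=rP_{r,0}$), the second summand integrates to $(n-r)$ times
\[ \int_\Sigma P_{r-1,0}\langle L,\tfrac{\pl}{\pl t}\rangle d\mu + \frac{r}{2(n-r)}\int_\Sigma P_{r,0}\,Q(L,\Lb)d\mu, \]
which is precisely the left-hand side of the claimed inequality \eqref{Minkowski inequality1}.

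It then remains only to read off the sign. Rearranging the displayed identity, the claimed expression equals $-\tfrac{1}{n-r}\int_\Sigma (\na_a T^{ab}_{r,0})Q(L,\pl_b)\,d\mu$, and after relabelling indices (using the symmetry of $T_{r,0}$) the integrand is $\sum_{a,b}(\na_b T^{ab}_{r,0})Q(L,\pl_a)$, which Lemma \ref{divergence structure} asserts is $\le 0$ under assumption (1) (covering $r=2$) or assumption (2) (covering $r\ge 3$). For $r=1$ one has $T^{ab}_{1,0}=\sigma^{ab}$, which is divergence-free, so the term vanishes and \eqref{Minkowski inequality1} holds with equality, recovering Theorem \ref{first Minkowski identity}. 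Since $n-r>0$, it follows that the claimed expression is $\ge 0$. The inequality \eqref{Minkowski inequality2} is obtained by the identical argument applied to $\Tb^{ab}_{0,s}\,Q(\Lb,\pl_b)$, using $\na_a[Q(\Lb,\pl_b)] = (D_aQ)(\Lb,\pl_b)+\chib_a^{\;c}Q_{cb}-\tfrac12\chib_{ab}Q(L,\Lb)$ together with the corresponding sign statements of Lemma \ref{divergence structure} (assumption (1) for $s=2$, assumption (3) for $s\ge 3$).

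The only genuine input beyond this bookkeeping is Lemma \ref{divergence structure}, which already pins down the sign of the divergence term. Given it, the point I expect to require the most care is the algebraic matching of the combinatorial constants from the appendix and, conceptually, the verification that the anti-symmetric $Q_{cb}$-term genuinely vanishes in the non-constant-curvature setting. I regard the latter — recognising that the sole obstruction present for general $P_{r,s}$, namely the failure of $\chi$ and $\chib$ to commute owing to the curvature term in \eqref{ricci_eq}, is absent precisely when $r=0$ or $s=0$ — as the conceptual crux, after which the sign analysis is immediate from the lemma.
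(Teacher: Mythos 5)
Your proposal is correct and follows essentially the same route as the paper: the paper's proof consists precisely of the observation that $T^{ab}_{r,0}$, being a polynomial in $\chi$ alone, commutes with $\chi$ so that the antisymmetric $Q_{cb}$-term vanishes, after which one runs the divergence-theorem computation of Theorem \ref{minkowski1} while keeping the $(\na_a T^{ab}_{r,0})Q(L,\pl_b)$ term, whose sign is supplied by Lemma \ref{divergence structure}. Your expansion, the use of identities \eqref{a} and \eqref{b}, and the final sign bookkeeping all match the intended argument.
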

\begin{proof}
Note that $T^{ab}_{r, 0}$ is a polynomial in $\chi$ only and thus $T^{ba}_{r,0} \chi_a^c - T^{ca}_{r,0}\chi_a^b=0$.  By the above lemma, we can proceed as in Theorem \ref{minkowski1}.
\end{proof}
In \cite{BE}, Brendle and Eichmair considered the case that $\Sigma$ is a closed embedded hypersurface contained in a totally geodesic time-slice $M$ of the Schwarzschild spacetime. Assume $\Sigma$ to be star-shaped and convex, they derived an interesting integral inequality 
\begin{equation}\label{Minkowski inequality3}
(n-k) \int_\Sigma f\sigma_{k-1} d\mu \leq k\int_\Sigma \sigma_k \langle X,\nu \rangle d\mu
\end{equation} 
where $f=\sqrt{1-\frac{2m}{\rho^{n-2}}}$, $X = \rho f \frac{\pl}{\pl \rho}$ is the conformal Killing vector and $\nu$ is the outward unit normal vector field of $\Sigma$. 
\par
In fact, \eqref{Minkowski inequality3} can be recovered by \eqref{Minkowski inequality1} or \eqref{Minkowski inequality2}. The argument is similar as the discussion at the end of section 4 where we recover the classical Minkowski formulae \eqref{Minkowski.formula} by \eqref{Minkowski formula for higher order mean curvatureL} or \eqref{Minkowski formula for higher order mean curvatureLb} except that one needs to check the assumption in Theorem \ref{Minkowski in Schwarzschild}. First, it is easy to see that being star-shaped is equivalent to $Q(L, \udl L)\ge 0$ because of identity \eqref{QLL}. On the other hand, the convexity of $\Sigma$ implies $\chi$ is positive definite. 
\par

For a submanifold $\Sigma$ on a totally geodesic slice $M_t$, the term \eqref{condition_on_Q} can be compared with the Ricci curvature term in Brendle-Eichmair's formula \cite{BE} (at the end of the proof of Proposition 8).  Indeed, given two vectors $v,w$ tangent to $M_t$, the Ricci curvature satisfies
\begin{align*}
&Ric_{M_t}(v,w) = R(v,e_{n+1},w,e_{n+1}) \\
&= -\frac{2m}{\rho^n} \bar{g}(v,w) - \frac{n(n-1)m}{\rho^{n+2}} Q(v,e_{n+1})Q(w,e_{n+1})- \frac{nm}{\rho^{n+1}} \lt( \bar{g}(v,w) Q^2(e_{n+1},e_{n+1}) - Q^2(v,w) \rt), 
\end{align*}
by the Gauss equation and (\ref{curvature tensor in terms of Q: Schwarzschild}). Let $\nu$ be the outward normal of $\Sigma$. We note that
\begin{align*}
Q^2(e_i,\nu) = Q^2(L,e_i) =  Q(\nu,e_{n+1})Q(e_i, e_{n+1}),
\end{align*} 
where $L = e_{n+1} + \nu $. Hence, $Ric_{M_t}(e_i,\nu) = -\frac{n^2m}{\rho^{n+2}} Q^2(L,e_i)$. 
\par

On the other hand, we claim that the condition in Lemma \ref{divergence structure}
\[ 
(Q^2)(L,v) Q(L,v)  \le 0 \ \text{ for any vector } v \text{ tangent to } \Sigma
\]
is automatically satisfied under the star-shaped condition. Indeed, the main ingredient is that the tangent vector $v$ does not have $\frac{\partial}{\partial t}$ component if $\Sigma$ lies in a totally geodesic time-slice. By the definition of $Q^2$,  \eqref{Q^2}, and noting that $Q(\partial_b, v)= \rho d\rho \wedge dt (\partial_b, v)=0$ for any tangent vector $\partial_b$, we expand
\begin{align*}
(Q^2)(L,v) = \bar{g}^{\udl L L}Q( L, \udl L)Q(L, v).
\end{align*}
Therefore,
\[
(Q^2)(L,v) Q(L,v) = -\frac{1}{2} Q(L, \udl L) \left(Q(L, v)\right)^2 \le 0,
\]
provided that $\Sigma$ is star-shaped.

\medskip
\par

Again, once we have the Minkowski formulae at hand, the spacetime Alexandrov theorems follow by the spacetime Heintze-Karcher inequality as in Theorem \ref{Alexandrov theorem for Weingarten submanifolds in Minkowski spacetime}.

\begin{corollary}\label{Alexandrov in Schwarzschild}
Let $\Sigma$ be a past (future, respectively) incoming null embedded (see Definition \ref{future incoming null embedded}) closed spacelike codimension-two submanifold in the Schwarzschild spacetime.  Suppose that $\Sigma$ is torsion-free. If $\Sigma$  satisfies the assumptions in either (1) or (2) ( (1) or (3), respectively) in Lemma \ref{divergence structure} and 
\begin{eqnarray}\label{constant}
P_{r, 0}(\chi, \udl\chi)= C  \hspace{1cm}\left(  P_{0,s}(\chi,\chib) = (-1)^s C , \mbox{respectively} \right)
\end{eqnarray}
for some positive constant $C$, then $\Sigma$ lies in a null hypersurface of symmetry.
\end{corollary}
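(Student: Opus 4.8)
The plan is to run the argument of Theorem~\ref{Alexandrov theorem for Weingarten submanifolds in Minkowski spacetime} essentially verbatim, with the exact Minkowski identity of Theorem~\ref{minkowski1} replaced by the one-sided Minkowski estimate of Theorem~\ref{Minkowski in Schwarzschild}. The three ingredients are the integral inequality \eqref{Minkowski inequality1} (resp.\ \eqref{Minkowski inequality2}), the Newton--MacLaurin inequality \eqref{NM}, and the spacetime Heintze--Karcher inequality \eqref{Heintze-Karcher inequality1} of Theorem~\ref{Spacetime Heintze-Karcher inequality: past incoming} (resp.\ \eqref{Heintze-Karcher inequality} of Theorem~\ref{Spacetime Heintze-Karcher inequality}). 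The point is to manufacture, out of the first two, an inequality that is the exact reverse of Heintze--Karcher; the two then force an equality, and the rigidity in the equality case places $\Sigma$ in a shear-free null hypersurface, i.e.\ a null hypersurface of symmetry in the Schwarzschild spacetime.

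I would treat the past incoming case with $P_{r,0}=C>0$; the future incoming case with $P_{0,s}=(-1)^sC$ follows by reversing the time orientation exactly as in Theorem~\ref{Spacetime Heintze-Karcher inequality: past incoming}. Since $\Sigma$ satisfies assumption (1) or (2) of Lemma~\ref{divergence structure}, estimate \eqref{Minkowski inequality1} holds, and dividing it by the positive constant $C=P_{r,0}$ gives
\[
\int_\Sigma \frac{P_{r-1,0}}{P_{r,0}}\,\langle L,\tfrac{\pl}{\pl t}\rangle\, d\mu + \frac{r}{2(n-r)}\int_\Sigma Q(L,\Lb)\, d\mu \ge 0.
\]
The positive-definiteness of $\chi$ guarantees $\chi\in\Gamma_r$, so \eqref{NM} applies repeatedly to yield the pointwise bound $\tfrac{P_{r-1,0}}{P_{r,0}}\ge \tfrac{r(n-1)}{(n-r)\,\mxtr\chi}$. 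As $\langle L,\tfrac{\pl}{\pl t}\rangle<0$, multiplying reverses this inequality, and substituting back I expect to reach the clean estimate
\[
(n-1)\int_\Sigma \frac{\langle L,\tfrac{\pl}{\pl t}\rangle}{\mxtr\chi}\, d\mu + \frac12\int_\Sigma Q(L,\Lb)\, d\mu \ge 0,
\]
identical in form to the constant-curvature case.

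Finally I would use $\mxtr\chi = -\langle\vec H,L\rangle$ to rewrite this last estimate as $(n-1)\int_\Sigma \tfrac{\langle \frac{\pl}{\pl t},L\rangle}{\langle\vec H,L\rangle}\,d\mu - \tfrac12\int_\Sigma Q(L,\Lb)\,d\mu \le 0$, which is precisely the reverse of \eqref{Heintze-Karcher inequality1}. Hence both are equalities, and the rigidity part of Theorem~\ref{Spacetime Heintze-Karcher inequality: past incoming} forces $\Sigma$ into an outgoing shear-free null hypersurface; since the spheres of symmetry are the only umbilical hypersurfaces in a Schwarzschild time slice, this is a null hypersurface of symmetry. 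The analytic content having already been absorbed into Theorem~\ref{Minkowski in Schwarzschild}, the residual work is bookkeeping, and the delicate point is sign tracking: one must verify that the ambient-curvature divergence term of Theorem~\ref{Minkowski in Schwarzschild} carries the sign dictated by assumptions (2)/(3) of Lemma~\ref{divergence structure}, and that $\chi\in\Gamma_r$, so that \eqref{NM} is legitimate and the two reverse inequalities close up to an equality rather than merely sandwiching loosely.
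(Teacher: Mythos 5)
Your proposal is correct and is exactly the argument the paper intends: the paper's own proof of Corollary \ref{Alexandrov in Schwarzschild} is a one-line reference back to the scheme of Theorem \ref{Alexandrov theorem for Weingarten submanifolds in Minkowski spacetime}, namely divide the one-sided Minkowski estimate of Theorem \ref{Minkowski in Schwarzschild} by the constant $P_{r,0}$, apply Newton--MacLaurin (with the sign flip from $\langle L,\frac{\pl}{\pl t}\rangle<0$), and close against the spacetime Heintze--Karcher inequality to force equality and hence shear-freeness. Your sign-tracking and the final reduction to $(n-1)\int_\Sigma \langle \frac{\pl}{\pl t},L\rangle/\langle\vec H,L\rangle\,d\mu-\frac12\int_\Sigma Q(L,\Lb)\,d\mu\le 0$ match the paper's computation verbatim.
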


\begin{remark}
In \cite{Li}, Li-Wei-Xiong show that the convexity assumption in Brendle-Eichmair's result can be removed. The same argument works here if we assume $\chi$ (or $\udl\chi$) is positive definite at a point. \end{remark} 

\bigskip
The above discussion on the Schwarzschild spacetime indicates that it is not easy to get a clean general form of Minkowski formulae with nontrivial curvature and torsion. In the rest of this section, we focus on a closed spacelike 2-surface in the 4-dimensional Schwarzschild spacetime which carries the two-form $Q=rdr\wedge dt$. With the same notations as in Section 4, we have
\[
T^{ab}_{2, 0}=  (\mxtr\chi) \sigma^{ab} - \chi^{ab}, \text{ and } \ \udl T^{ab}_{0,2} = (\mxtr\chib) \sigma^{ab} - \chib^{ab}.
\]
Recall the Codazzi equations
\begin{align}
\na_a \chi_{bc} - \na_b \chi_{ac} = \bar{R}_{abcL} + \chi_{ac} \zeta_b - \chi_{bc} \zeta_a \label{Codazzi null L};\\
\na_a \chib_{bc} - \na_b \chib_{ac} = \bar{R}_{abc\Lb} - \chib_{ac}\zeta_b + \chib_{bc} \zeta_a \label{Codazzi null Lbar} .
\end{align}
Taking trace of (\ref{Codazzi null L}), (\ref{Codazzi null Lbar}), and using the Ricci-flatness of 4-dimensional Schwarzschild spacetime, we get
\begin{align}
\na_a T^{ab}_{2,0} = - \sigma^{ac} \bar{R}_{a\;\;cL}^{\;\;b}  - T^{ab}_{2,0} \zeta_a  = -\frac{1}{2} \bar{R}_{L\;\;\Lb L}^{\;\;b} - T^{ab}_{2,0} \zeta_a; \\
\na_a \Tb^{ab}_{0,2} = -\sigma^{ac} \bar{R}_{a\;\;c\Lb}^{\;\;b} + \Tb^{ab}_{0,2} \zeta_a = -\frac{1}{2} \bar{R}_{\Lb\;\;L\Lb}^{\;\;b} + \Tb^{ab}_{0,2} \zeta_a.
\end{align}
Now, we run the same proof as Theorem \ref{minkowski1} (Theorem D) by considering
\[
\int_\Sigma \nabla_a[ T^{ab} Q(\udl L, \partial_b)]d\mu=0, \text{ and }\int_\Sigma \na_a [\Tb^{ab}Q(L,\pl_b)] d\mu =0.
\]
and get
\begin{align}
0 &= \int_\Sigma -\frac{1}{2} \bar{R}_{L\;\;\Lb L}^{\;\;b} Q(\Lb,\pl_b)	+ \mxtr\chi \langle \Lb,\frac{\pl}{\pl t} \rangle - \chi^{ab}\chib_a^c Q_{cb} - \frac{1}{2} \lt( \mxtr\chi \mxtr\chib - \chi^{ab} \chib_{ab} \rt) Q(L,\Lb)	d\mu \label{T,Lbar after Codazzi};\\
0 &= \int_\Sigma -\frac{1}{2} \bar{R}_{\Lb\;\;L\Lb}^{\;\;b} Q(L,\pl_b) + \mxtr\chib \langle L,\frac{\pl}{\pl t} \rangle -\chib^{ab}\chi_a^c Q_{cb} + \frac{1}{2} \lt( \mxtr\chi \mxtr\chib - \chib^{ab} \chi_{ab} \rt) Q(L,\Lb)	d\mu \label{Tbar,L after Codazzi}.
\end{align}

\par
Since $\frac{\partial}{\partial t} $ is a Killing field and $\vec{H}=\frac{1}{2} tr\udl\chi L+\frac{1}{2} tr\chi \udl{L}$, \[0=\int_\Sigma \langle \vec{H}, \frac{\pl}{\pl t}\rangle d\mu=\frac{1}{2} \int_\Sigma tr \udl\chi \langle L, \frac{\pl}{\pl t}\rangle
d\mu+\frac{1}{2} \int_\Sigma tr \chi \langle \udl L, \frac{\pl}{\pl t}\rangle
d\mu.\]
Subtracting (\ref{Tbar,L after Codazzi}) from (\ref{T,Lbar after Codazzi}), we obtain
\begin{align*}
&2 \int_\Sigma tr\chi \langle \udl L,\frac{\pl}{\pl t} \rangle  d\mu \\
&= \int_\Sigma \frac{1}{2} \lt( \bar{R}_{L\;\;\Lb L}^{\;\;b} Q(\Lb,\pl_b) - \bar{R}_{\Lb\;\;L\Lb}^{\;\;b} Q(L, \pl_b) \rt)+ \lt( \chi^{ab} \chib_a^c - \chib^{ab}\chi_a^c \rt)Q_{cb} + \lt( \mxtr\chi \mxtr\chib - \chi^{ab}\chib_{ab} \rt) Q(L,\Lb) d\mu \\
&= \int_\Sigma \frac{1}{2} \lt( \bar{R}_{L\;\;\Lb L}^{\;\;b} Q(\Lb,\pl_b) - \bar{R}_{\Lb\;\;L\Lb}^{\;\;b} Q(L, \pl_b) \rt) + \lt( \bar{R}_{bc\Lb L} - 2 (d\zeta)_{bc} \rt) Q^{cb} + \lt( \frac{1}{2} \bar{R}_{\Lb L L \Lb} - R \rt) Q(L,\Lb) d\mu  
\end{align*}
where we use the following Gauss and Ricci equations to get the last equality.
\begin{align}
\bar{R} + \bar{R}_{L\Lb} + \frac{1}{2} \bar{R}_{\Lb L L \Lb} = R + \mxtr\chi \mxtr\chib - \chi_{ab}\chib^{ab} \label{Gauss null};\\
\frac{1}{2} \bar{R}_{ab\Lb L} = (d\zeta)_{ab} + \frac{1}{2} \lt( \chi_a^{\;\;c} \chib_{cb} - \chib_a^{\;\;c} \chi_{cb} \rt) \label{Ricci null}.
\end{align}

\par
Next, from
\[\bar{R}_{\alpha\beta \Lb L} Q^{\alpha\beta} = \bar{R}_{bc \Lb L} Q^{bc} - \bar{R}_{L b \Lb L} Q_{\Lb}^{\;\;b} - \bar{R}_{\Lb b \Lb L} Q_L^{\;\;b} + \frac{1}{2} \bar{R}_{\Lb L \Lb L} Q(L,\Lb),\]
we have
\[ \frac{1}{2} \lt( \bar{R}_{L\;\;\Lb L}^{\;\;b} Q(\Lb,\pl_b) - \bar{R}_{\Lb\;\;L\Lb}^{\;\;b} Q(L, \pl_b) \rt) = -\frac{1}{2} \bar{R}_{\alpha\beta\Lb L} Q^{\alpha\beta} + \frac{1}{2} \bar{R}_{bc\Lb L} Q^{bc} + \frac{1}{4} \bar{R}_{\Lb L \Lb L} Q(L,\Lb), \] where $\alpha, \beta=1, \cdots, n+1$ represent the indices of the ambient space. 
Therefore,
\begin{align*}
&2 \int_\Sigma tr \chi \langle \udl L,\frac{\pl}{\pl t} \rangle d \mu \\
&= \int_\Sigma -\frac{1}{2} \bar{R}_{\alpha\beta \Lb L} Q^{\alpha\beta} + \lt( \frac{1}{2} \bar{R}_{bc \Lb L} - 2(d\zeta)_{bc} \rt)Q^{cb} + \lt( \frac{1}{4} \bar{R}_{\Lb L L \Lb} - R \rt) Q(L,\Lb) d\mu.
\end{align*}
\par
Consider the two-form	$\eta = \bar{R}_{\alpha\beta\mu\nu} Q^{\alpha\beta} dx^\mu dx^\nu$ on the spacetime. In \cite[section 3.3]{JL}, it is shown that $d \eta = d * \eta =0$. So $\int_\Sigma \bar{R}_{\alpha\beta \Lb L} Q^{\alpha\beta}$ is the same for any 2-surface bounding a 3-volume. Evaluating the integral on a sphere with $t = constant $ and $r = constant$, we have \cite[(53)]{JL}
\[ \int_\Sigma \bar{R}_{\alpha\beta \Lb L} Q^{\alpha\beta} d\mu = -32 \pi m. \]

\medskip
\par
As a result, we reach the following Minkowski formula on the 4-dimensional Schwarzschild spacetime.
\begin{theorem}[Theorem F]\label{4dimensional Schwarzschild}
Consider the two-form $Q=rdr\wedge dt$ on the 4-dimensional Schwarzschild spacetime with $m\geq 0$. For a closed oriented spacelike 2-surface $\Sigma$, we have
\begin{align*}
&2\int_\Sigma \langle \vec{H}, L\rangle \langle \udl L,\frac{\pl}{\pl t} \rangle d\mu \\\nonumber
&= - 16\pi m + \int_\Sigma \lt(R+ \frac{1}{4} \bar{R}_{L \Lb L \Lb} \rt) Q(L,\Lb) + \sum_{b,c=1}^2 \lt( \frac{1}{2} \bar{R}_{bc\Lb L} - 2(d\zeta_L)_{bc} \rt)Q_{bc} d\mu.
\end{align*}
where $\zeta_L$ is the connection 1-form of the normal bundle with respect to $L$, $\bar{R}$ is the curvature tensor of the Schwarzschild spacetime, $R$ is the scalar curvature of $\Sigma$, and $Q_{bc}=Q(e_b, e_c)$, $(d\zeta_L)_{bc}=(d\zeta_L)(e_b, e_c)$, etc.
\end{theorem}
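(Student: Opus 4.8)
The plan is to run the same integration-by-parts scheme that produces the constant-curvature Minkowski formulae of Theorem~\ref{minkowski1}, but specialized to $n=3$ and carried out \emph{without} discarding the ambient curvature and torsion terms that a constant-curvature, torsion-free hypothesis would have killed. The two Newton-type tensors available in this dimension are $T^{ab}_{2,0}=(\mxtr\chi)\sigma^{ab}-\chi^{ab}$ and $\udl T^{ab}_{0,2}=(\mxtr\chib)\sigma^{ab}-\chib^{ab}$, as in \eqref{T in dimension 3}. First I would compute their surface divergences directly from the null Codazzi identities \eqref{Codazzi null L} and \eqref{Codazzi null Lbar}; taking the trace and using the Ricci-flatness of the $4$-dimensional Schwarzschild metric lets the tangential trace of the ambient curvature be rewritten as the null component, giving $\na_a T^{ab}_{2,0}=-\tfrac12\bar R_{L\;\;\Lb L}^{\;\;b}-T^{ab}_{2,0}\zeta_a$ and $\na_a\udl T^{ab}_{0,2}=-\tfrac12\bar R_{\Lb\;\;L\Lb}^{\;\;b}+\udl T^{ab}_{0,2}\zeta_a$. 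The torsion terms $-T^{ab}_{2,0}\zeta_a$ and $+\udl T^{ab}_{0,2}\zeta_a$ must be kept, since no torsion-free assumption is made here.

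Next I would integrate $\na_a\!\left[T^{ab}_{2,0}\,Q(\udl L,\pl_b)\right]$ and $\na_a\!\left[\udl T^{ab}_{0,2}\,Q(L,\pl_b)\right]$ over the closed surface, each vanishing by the divergence theorem. Expanding the products and feeding in the conformal Killing--Yano equation \eqref{CKY equation} (with $\xi=-n\,\pl/\pl t$, so that the symmetrized derivative yields $2\langle\,\cdot\,,\pl/\pl t\rangle\sigma_{ab}$) produces the two identities \eqref{T,Lbar after Codazzi} and \eqref{Tbar,L after Codazzi}. To combine them I would use that $\pl/\pl t$ is Killing, whence $\int_\Sigma\langle\vec H,\pl/\pl t\rangle\,d\mu=0$; writing $\vec H=\tfrac12(\mxtr\chib)L+\tfrac12(\mxtr\chi)\udl L$ this couples the two mixed integrals $\int_\Sigma\mxtr\chib\,\langle L,\pl/\pl t\rangle$ and $\int_\Sigma\mxtr\chi\,\langle\udl L,\pl/\pl t\rangle$, so that subtracting \eqref{Tbar,L after Codazzi} from \eqref{T,Lbar after Codazzi} isolates a single scalar integral $\int_\Sigma\mxtr\chi\,\langle\udl L,\pl/\pl t\rangle$. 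At this point the Gauss equation \eqref{Gauss null} rewrites $\mxtr\chi\,\mxtr\chib-\chi_{ab}\chib^{ab}$ in terms of the intrinsic scalar curvature $R$, while the Ricci equation \eqref{Ricci null} turns the antisymmetric combination $\chi^{ab}\chib_a^{\;\;c}-\chib^{ab}\chi_a^{\;\;c}$ together with the leftover $\zeta$ terms into exactly $\bar R_{bc\Lb L}-2(d\zeta)_{bc}$ contracted with $Q^{cb}$; this cancellation is what makes the final formula clean despite the presence of torsion. Finally I would substitute $\mxtr\chi=-\langle\vec H,L\rangle$ (immediate from $\langle L,L\rangle=0$ and $\langle L,\udl L\rangle=-2$), which recovers the stated left-hand side after multiplying through by $-1$ and accounts for the overall sign and for the antisymmetry rearrangements $\bar R_{\Lb L L\Lb}=-\bar R_{L\Lb L\Lb}$, $Q^{cb}=-Q^{bc}$ needed to match the form in the theorem.

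The step I expect to be the main obstacle is the fully contracted ambient curvature $\int_\Sigma\bar R_{\alpha\beta\Lb L}\,Q^{\alpha\beta}\,d\mu$ and its identification as the topological constant. Using the Schwarzschild curvature expression in terms of $Q$ (see \eqref{curvature tensor in terms of Q: Schwarzschild}) I would first split the purely tangential contraction $\bar R_{bc\Lb L}Q^{bc}$ off from the normal-direction pieces, via the index decomposition of $\bar R_{\alpha\beta\Lb L}Q^{\alpha\beta}$ over $\alpha,\beta=1,\dots,n+1$. The crucial input from \cite{JL} is that the two-form $\eta=\bar R_{\alpha\beta\mu\nu}Q^{\alpha\beta}\,dx^\mu\wedge dx^\nu$ is both closed and co-closed; reading $\bar R_{\alpha\beta\Lb L}Q^{\alpha\beta}$ as a multiple of the flux of the closed form $\star\eta$ through $\Sigma$ shows that $\int_\Sigma\bar R_{\alpha\beta\Lb L}Q^{\alpha\beta}\,d\mu$ is unchanged under homologous deformations of $\Sigma$, so it may be evaluated on a round coordinate sphere $\{t=\text{const},\,r=\text{const}\}$, where \cite{JL} gives the universal value $-32\pi m$ and hence the constant $-16\pi m$. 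Verifying that the sign and normalization conventions of this homological reduction are consistent with the null frame fixed by $\langle L,\udl L\rangle=-2$ is the most delicate bookkeeping in the argument.
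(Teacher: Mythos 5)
Your proposal is correct and follows essentially the same route as the paper: the divergences of $T^{ab}_{2,0}$ and $\udl T^{ab}_{0,2}$ via the null Codazzi equations with Ricci-flatness, integration by parts against $Q(\udl L,\cdot)$ and $Q(L,\cdot)$, the Killing identity $\int_\Sigma\langle\vec H,\pl/\pl t\rangle\,d\mu=0$ to combine the two identities, the Gauss and Ricci equations to produce $R$ and $\bar R_{bc\Lb L}-2(d\zeta)_{bc}$, and the closed/co-closed form $\eta=\bar R_{\alpha\beta\mu\nu}Q^{\alpha\beta}dx^\mu dx^\nu$ from \cite{JL} to evaluate $\int_\Sigma\bar R_{\alpha\beta\Lb L}Q^{\alpha\beta}\,d\mu=-32\pi m$ on a coordinate sphere. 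No gaps; the final substitution $\mxtr\chi=-\langle\vec H,L\rangle$ and sign bookkeeping are exactly as in the paper.
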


\bigskip
\appendix
\section{Proof of Some Algebraic Relations}
In this section,  G\aa rding's theory for hyperbolic polynomials, in particular for elementary symmetric functions $\sigma_k$, is reviewed and applied to prove several algebraic relations for mixed higher order mean curvatures. For more detailed discussion about polarized $\sigma_k$ functions, we refer to the Appendix in the lecture notes by Guan \cite{Guan2004, Guan}.

\begin{definition}
Let $W^1, \cdots, W^{n-1}$ be $(n-1)\times (n-1)$ symmetric matrices, define the {\it mixed determinant} $\sigma_{(n-1)}\left(W^1, \cdots, W^{n-1}\right)$ such that $\frac{1}{(n-1)!} \sigma_{(n-1)}\left(W^1, \cdots, W^{n-1}\right)$ is the coefficient of the term $t_1\cdots t_{n-1}$ in the polynomial
\[
\det \left(t_1 W^1 +\cdots + t_{n-1} W^{n-1}\right).
\]
In general, for $1\leq k\leq n-1$, we define the {\it complete polarization} of the symmetric function $\sigma_k$ by
\begin{eqnarray}\label{polar1}
\sigma_{(k)}\left (W^1, \cdots, W^k\right) = \binom{n-1}{k}\sigma_{(n-1)} \left(W^1, \cdots, W^k, I_{n-1}, \cdots, I_{n-1}\right),
\end{eqnarray} 
where the identity matrix $I_{n-1}$ appears $(n-1-k)$ times.
\end{definition}

Higher order mixed mean curvatures can be expressed in terms of complete polarizations of the elementary symmetric functions, $\sigma_k$:
\begin{lemma}The following identity holds:
\begin{eqnarray}\label{polar2} 
P_{r, s} \left(\chi, \udl\chi\right)=\sigma_{(r+s)}(\underbrace{\chi, \cdots, \chi}_{r}, \underbrace{\udl\chi, \cdots, \udl\chi}_s).
\end{eqnarray}
\end{lemma}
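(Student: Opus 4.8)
The plan is to reduce the claimed identity to the standard polarization property of the elementary symmetric functions together with the generating-function definition of $P_{r,s}$. First I would use the induced metric $\sigma$ to raise an index and regard $\chi,\udl\chi$ as the endomorphisms $\chi^a_{\ b},\udl\chi^a_{\ b}$ of the $(n-1)$-dimensional tangent space, so that $\sigma$ becomes the identity $I$ and the defining relation \eqref{det} reads
\[
\det(I+y\chi+\udl{y}\udl\chi)=\sum_{0\le r+s\le n-1}\frac{(r+s)!}{r!\,s!}\,y^r\,\udl{y}^s\,P_{r,s}(\chi,\udl\chi).
\]
This is now a polynomial identity in the two scalars $y,\udl{y}$, valid pointwise on $\Sigma$, so it suffices to compute the left-hand side explicitly and match the coefficient of $y^r\udl{y}^s$ on the two sides.

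The key preliminary I would record is the normalization
\[
\sigma_{(k)}(\underbrace{W,\cdots,W}_{k})=\sigma_k(W)
\]
for the complete polarization of \eqref{polar1}. This is the substantive content of that definition once the combinatorial constants are tracked: inserting $W$ into the first $k$ slots and $I$ into the remaining $n-1-k$ slots, one reads off the coefficient of $t_1\cdots t_{n-1}$ in $\det\!\big((t_1+\cdots+t_k)W+(t_{k+1}+\cdots+t_{n-1})I\big)$, which equals $k!\,(n-1-k)!\,\sigma_k(W)$; the normalizing factor $\binom{n-1}{k}$ together with the $\tfrac{1}{(n-1)!}$ in the defining mixed determinant then multiply this to $\sigma_k(W)$, since $\binom{n-1}{k}\cdot\tfrac{1}{(n-1)!}\cdot k!\,(n-1-k)!=1$. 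Granting this, $\sigma_{(k)}$ is the unique symmetric $k$-linear form on symmetric matrices whose restriction to the diagonal is $\sigma_k$.

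Next I would expand the left-hand determinant. Writing $M=y\chi+\udl{y}\udl\chi$ and using $\det(I+M)=\sum_{k=0}^{n-1}\sigma_k(M)$ (valid because $M$ is $(n-1)\times(n-1)$, so $\det(I+M)=\prod_i(1+\mu_i)$ for the eigenvalues $\mu_i$), I would polarize each summand. By the multilinearity and symmetry of $\sigma_{(k)}$ and the normalization above,
\[
\sigma_k(y\chi+\udl{y}\udl\chi)=\sum_{r+s=k}\binom{k}{r}\,y^r\,\udl{y}^s\,\sigma_{(k)}\big(\underbrace{\chi,\cdots,\chi}_{r},\underbrace{\udl\chi,\cdots,\udl\chi}_{s}\big),
\]
where $\binom{k}{r}$ counts the ways to assign $\chi$ to $r$ of the $k$ slots. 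Summing over $k$ and using $\binom{r+s}{r}=\tfrac{(r+s)!}{r!\,s!}$ gives
\[
\det(I+y\chi+\udl{y}\udl\chi)=\sum_{r,s}\frac{(r+s)!}{r!\,s!}\,y^r\,\udl{y}^s\,\sigma_{(r+s)}\big(\underbrace{\chi,\cdots,\chi}_{r},\underbrace{\udl\chi,\cdots,\udl\chi}_{s}\big).
\]
Comparing with the defining expansion of $P_{r,s}$ and matching the coefficient of $y^r\udl{y}^s$ — the common weights $\tfrac{(r+s)!}{r!\,s!}$ cancel — yields \eqref{polar2}.

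The only real obstacle is the bookkeeping of combinatorial constants in the preliminary step: one must verify that the complete polarization, as defined through mixed determinants with inserted copies of $I$, really restricts to $\sigma_k$ on the diagonal, so that the binomial weights produced by the polarization identity line up exactly with the weights $\tfrac{(r+s)!}{r!\,s!}$ built into \eqref{det}. Once the normalization $\sigma_{(k)}(W,\cdots,W)=\sigma_k(W)$ is pinned down, the remainder is the formal multilinear expansion displayed above and the argument closes.
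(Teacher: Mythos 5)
Your argument is correct, and it reaches \eqref{polar2} by a route that is organized differently from the paper's, though both are at bottom polarization/coefficient-matching computations. The paper proves the identity in one stroke: it substitutes the grouped variables $t=t_1+\cdots+t_r$, $\udl t=t_{r+1}+\cdots+t_{r+s}$, $t_0=t_{r+s+1}+\cdots+t_{n-1}$ into the $(n-1)$-variable determinant defining $\sigma_{(n-1)}$, expands via the generating relation \eqref{det}, and reads off the coefficient of $t_1\cdots t_{n-1}$ directly, obtaining $\sigma_{(n-1)}(\chi,\dots,\chi,\udl\chi,\dots,\udl\chi,I_{n-1},\dots,I_{n-1})=\tbinom{n-1}{r+s}^{-1}P_{r,s}$ and then invoking \eqref{polar1}. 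You instead factor the argument through two standard facts: the diagonal normalization $\sigma_{(k)}(W,\dots,W)=\sigma_k(W)$ (which you verify by the same kind of coefficient count the paper performs, but only in the special case of one matrix), the uniqueness of the symmetric multilinear polarization, and the multinomial expansion of $\sigma_k(y\chi+\udl y\,\udl\chi)$ combined with $\det(I+M)=\sum_k\sigma_k(M)$. What your version buys is conceptual transparency — it makes explicit that $\sigma_{(k)}$ is \emph{the} polarization of $\sigma_k$ and that \eqref{polar2} is then forced by matching binomial weights, and it isolates where the combinatorial factors $\tfrac{(r+s)!}{r!s!}$ in \eqref{det} come from; the paper's version is shorter because it never needs the uniqueness statement. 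Two minor points worth noting: for the uniqueness argument you should record (it is immediate from the definition) that $\sigma_{(n-1)}$, hence $\sigma_{(k)}$, is symmetric and multilinear in its arguments; and both you and the paper's own proof read Definition \eqref{polar1} with the normalization $\sigma_{(n-1)}=\tfrac{1}{(n-1)!}\times(\text{coefficient of }t_1\cdots t_{n-1})$, which is the intended (standard mixed-discriminant) convention even though the literal wording of the definition states the reciprocal relation — so your bookkeeping is consistent with the paper's.
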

\begin{proof}
 Notice that $\frac{1}{(n-1)!}\sigma_{(n-1)}( \underbrace{\chi, \cdots, \chi}_{r}, \underbrace{\udl\chi, \cdots, \udl\chi}_s, \underbrace{I_{n-1}, \cdots, I_{n-1}}_{(n-1)-(r+s)})$ is the coefficient of the term $ t_1\cdots t_{n-1}$ in the polynomial
 \[
\sigma_{n-1} \left(t_1 \chi +\cdots + t_{r} \chi + t_{r+1}\udl\chi +\cdots + t_{r+s}\udl\chi + t_{r+s+1}+\cdots + t_{n-1}\right).
\]
Denote $t=t_1+\cdots + t_r,\ \udl t = t_{r+1} + \cdots + t_{r+s},\ t_0= t_{r+s+1}+ \cdots + t_{n-1}$ and use the equation (\ref{det}), we get
\begin{align*}
&\sigma_{n-1} \left(t_1 \chi +\cdots + t_{r} \chi + t_{r+1}\udl\chi +\cdots + t_{r+s}\udl\chi + t_{r+s+1}+\cdots + t_{n-1}\right)\\
&= \det \left( t\chi + \udl t \udl\chi + t_0 I_{n-1}\right)\\
&= t_0^{n-1}\sum_{k, l}\frac{(k+l)!}{k! l!}\left(\frac{t}{t_0} \right)^k \left(\frac{\udl t}{t_0} \right)^l P_{k,l}(\chi, \udl{\chi})\\
&= \sum_{k, l}\frac{(k+l)!}{k! l!}t_0^{n-1-(k+l)}t^k \udl t^l P_{k,l}(\chi, \udl{\chi})\\
&= \sum_{k, l}\frac{(k+l)!}{k! l!}(  t_{r+s+1}+ \cdots + t_{n-1})^{n-1-(k+l)}(t_1+\cdots + t_r)^k (t_{r+1} + \cdots + t_{r+s})^l P_{k,l}(\chi, \udl{\chi}).
\end{align*}
For fixed $r, s$, the term $t_1\cdots t_{n-1}$ appears in the last expression only when $k=r$ and $l=s$, and the coefficient  is 
\[\frac{(r+s)!}{r!s!} \left( n-1-(r+s)\right)!\ r! \ s! \ P_{r, s}(\chi, \udl \chi).
\]
 Thus,
\begin{eqnarray*}
\sigma_{(n-1)}( \underbrace{\chi, \cdots, \chi}_{r}, \underbrace{\udl\chi, \cdots, \udl\chi}_s, \underbrace{I_{n-1}, \cdots, I_{n-1}}_{(n-1)-(r+s)} ) &=& \frac{(r+s)!\left( n-1-(r+s)\right)!}{(n-1)!}P_{r, s}(\chi, \udl \chi)\\
&=& \frac{1}{\binom{n-1}{r+s}}P_{r, s}(\chi, \udl\chi).
\end{eqnarray*}
(\ref{polar2}) follows from this and the definition of complete polarization \eqref{polar1}.
\end{proof}

\medskip
\par
From \eqref{polar2} and the definition of $ P_{r, s} $ and $T^{ab}_{r,s}$, we have the following basic identities:
\begin{eqnarray}
\label{b}\sum_{a, b} \chi_{ab} T_{r,s}^{ab}&=& \sum_{a, b}\chi_{ab} \frac{\partial P_{r, s}(\chi, \udl\chi)}{\partial \chi_{ab}} =rP_{r, s }(\chi, \udl\chi),\\
\label{c}\sum_{a, b} \udl\chi_{ab} \udl T_{r,s}^{ab}&=&\sum_{a, b}\udl\chi_{ab} \frac{\partial P_{r, s}(\chi, \udl\chi)}{\partial \udl\chi_{ab}} =sP_{r, s }(\chi, \udl\chi).
\end{eqnarray}
Indeed, the equalities (\ref{b}) and (\ref{c}) follow from the fact that $P_{r, s}(\chi, \udl\chi)$ is a polynomial in $\chi \text{ and } \udl\chi$, homogeneous of degrees $r$ and $s$ respectively.

In addition, the following equation $(\ref{a})$ can be deduced from the definition of complete polarized symmetric function and (\ref{polar2}):

\begin{lemma}
\begin{equation}\label{a}\sum_{a, b}\sigma_{ab} T_{r,s}^{ab}= \sum_{a, b}\sigma_{ab} \frac{\partial P_{r, s}(\chi, \udl\chi)}{\partial \chi_{ab}} = \frac{r\left(n-(r+s)\right)}{r+s}P_{r-1, s} (\chi, \udl\chi).\end{equation}
\end{lemma}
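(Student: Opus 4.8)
The plan is to prove \eqref{a} by pure linear algebra, working in an orthonormal frame in which $\sigma_{ab}=\delta_{ab}$ is the identity matrix $I_{n-1}$ and treating $\chi,\chib$ as symmetric $(n-1)\times(n-1)$ matrices. The key is that, by the polarization identity \eqref{polar2}, $P_{r,s}(\chi,\chib)=\sigma_{(r+s)}(\chi,\dots,\chi,\chib,\dots,\chib)$ with $r$ arguments equal to $\chi$ and $s$ equal to $\chib$, and that $\sigma_{(r+s)}$ is a \emph{symmetric multilinear} form in its $r+s$ matrix slots. Consequently $\chi\mapsto P_{r,s}(\chi,\chib)$ is homogeneous of degree $r$ in $\chi$, and its variational derivative in a symmetric direction picks up, by the product rule applied to the $r$ identical $\chi$-slots, a factor $r$ and replaces one $\chi$ by that direction.

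First I would evaluate this derivative in the direction $\sigma$. Since $\sum_{a,b}\sigma_{ab}T_{r,s}^{ab}=\sum_{a,b}\sigma_{ab}\frac{\delta P_{r,s}}{\delta\chi_{ab}}$ is exactly the directional derivative of $P_{r,s}(\,\cdot\,,\chib)$ at $\chi$ in the direction $\sigma=I_{n-1}$, the previous remark gives
\[
\sum_{a,b}\sigma_{ab}T_{r,s}^{ab}=r\,\sigma_{(r+s)}\bigl(I_{n-1},\underbrace{\chi,\dots,\chi}_{r-1},\underbrace{\chib,\dots,\chib}_{s}\bigr).
\]
It then remains to identify the polarization on the right, which carries one explicit identity argument, with $P_{r-1,s}$. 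For this I would apply the defining formula \eqref{polar1} twice: once to the right-hand side above, writing it as $\binom{n-1}{r+s}$ times the full mixed determinant $\sigma_{(n-1)}$ evaluated on $\chi$ ($r-1$ times), $\chib$ ($s$ times) and $I_{n-1}$ ($n-(r+s)$ times); and once to $P_{r-1,s}=\sigma_{(r+s-1)}(\chi,\dots,\chi,\chib,\dots,\chib)$, writing it as $\binom{n-1}{r+s-1}$ times the \emph{same} mixed determinant. Dividing and using $\binom{n-1}{r+s}/\binom{n-1}{r+s-1}=(n-(r+s))/(r+s)$ yields $\sigma_{(r+s)}(I_{n-1},\chi,\dots,\chi,\chib,\dots,\chib)=\tfrac{n-(r+s)}{r+s}P_{r-1,s}$, and combining this with the displayed identity gives \eqref{a}.

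I expect no serious obstacle, only bookkeeping: one must be careful that $\delta/\delta\chi_{ab}$ is the variational derivative over symmetric matrices contracted against the symmetric tensor $\sigma_{ab}$, so that the usual off-diagonal factors of two are consistent and the clean directional-derivative formula applies, and one must keep the normalization constant $\tfrac{(r+s)!}{r!\,s!}$ in \eqref{det} consistent with the unnormalized multilinear form $\sigma_{(r+s)}$ of \eqref{polar2}. As a sanity check I would test $r=1,\,s=0$: there $T_{1,0}^{ab}=\sigma^{ab}$ gives left-hand side $\mxtr I_{n-1}=n-1$, while the right-hand side is $\frac{1\cdot(n-1)}{1}P_{0,0}=n-1$ since $P_{0,0}=\det I_{n-1}=1$, confirming the constants.
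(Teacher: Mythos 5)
Your proof is correct, and it reaches \eqref{a} by a somewhat different route than the paper. The paper's argument runs through the one-parameter representation $P_{r,s}=\tbinom{r+s}{r}^{-1}\tfrac{1}{r!}\tfrac{d^r}{dt^r}\big|_{t=0}\sigma_{r+s}(t\chi+\chib)$: contracting with $\sigma_{ab}$ produces $t\,(n-(r+s))\,\sigma_{r+s-1}(t\chi+\chib)$ via the classical identity $\sum_i\partial\sigma_k/\partial\lambda_i=(n-k)\sigma_{k-1}$, and the factor $r$ then comes from Leibniz applied to $\tfrac{d^r}{dt^r}\big|_{t=0}\bigl[t\,h(t)\bigr]$, with the constant $\tfrac{r}{r+s}$ emerging from a ratio of $\tbinom{r+s-1}{r-1}$ to $\tbinom{r+s}{r}$. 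You instead exploit the symmetric multilinearity of $\sigma_{(r+s)}$ directly, so the factor $r$ appears immediately as the degree of homogeneity, and you obtain the trace identity
\begin{equation*}
\sigma_{(r+s)}\bigl(I_{n-1},\underbrace{\chi,\dots,\chi}_{r-1},\underbrace{\chib,\dots,\chib}_{s}\bigr)=\frac{n-(r+s)}{r+s}\,P_{r-1,s}(\chi,\chib)
\end{equation*}
purely from the normalization \eqref{polar1}, since both sides are the same mixed determinant $\sigma_{(n-1)}$ up to the binomial ratio $\tbinom{n-1}{r+s}/\tbinom{n-1}{r+s-1}=(n-(r+s))/(r+s)$. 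This is in effect the polarized form of the paper's $\sigma_k$ identity, so the two arguments are equivalent in substance, but yours is cleaner conceptually (the origin of each constant is transparent) at the cost of invoking \eqref{polar1}--\eqref{polar2} twice, whereas the paper's computation is more elementary once \eqref{polar3} is in hand. Your attention to the convention issue for $\delta/\delta\chi_{ab}$ over symmetric matrices is warranted and correctly resolved: the paper's $T^{ab}_{r,s}$ is defined by formal differentiation of the determinant treating all entries as independent (see \eqref{eq.T_ab}), so contraction against the symmetric tensor $\sigma_{ab}$ is exactly the directional derivative you use.
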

\begin{proof}[Proof of (\ref{a})]
 Indeed, we have the standard definition of completely polarized symmetric function as
\[
\sigma_{(k)}\left(W^1, W^2, \cdots, W^k \right) = \frac{1}{k!} \sum_{i_1, \cdots, i_k=1}^{n-1} W^1_{i_1} W^2_{i_2} \cdots W^k_{i_k} \frac{\partial^k \sigma_k(W)}{\partial W_{i_1}\cdots \partial W_{i_k}},
\]
where $W^j_{1}, \cdots, W^j_{n-1}$ are the eigenvalues of $W^j$. We note that $\frac{\partial^k \sigma_k(W)}{\partial W_{i_1}\cdots \partial W_{i_k}}$ is a combinatorial
constant depending
only on $k$ and $W$ can be replaced by any symmetric matrix. Thus
\begin{eqnarray}\label{polar3}
\sigma_{(k)} (\chi, \underbrace{\udl\chi, \cdots, \udl\chi}_{k-1}) &=& \frac{1}{k!}  \sum_{i_1, \cdots, i_k=1}^{n-1} \chi_{i_1} \udl\chi_{i_2}\cdots \udl\chi_{i_k} \frac{\partial^k \sigma_k(\udl\chi)}{\partial \udl\chi_{i_1}\cdots \partial \udl\chi_{i_k}}
= \frac{1}{k} \sum_{i=1}^{n-1} \chi_i \frac{\partial \sigma_k(\udl\chi)}{\partial \udl\chi_i} \\\nonumber
&=& \frac{1}{k}\ \frac{d}{d t}\Big|_{t=0}\sigma_k\left(t \chi+\udl\chi\right).
\end{eqnarray}
More generally, we have
\begin{eqnarray*}
P_{r, s} (\chi, \udl\chi) &=& \sigma_{(r+s)} ( \underbrace{\chi, \cdots, \chi}_{r},\ \underbrace{\udl\chi, \cdots, \udl\chi}_s) = \frac{s!}{(r+s)!} \ \frac{d^r}{d t^r}\Big|_{t=0}\sigma_{r+s}\left(t \chi+\udl\chi\right)\\
&=&\frac{1}{\binom{r+s}{r}}\left(\frac{1}{r!}\frac{d^r}{d t^r}\Big|_{t=0}\sigma_{r+s}(t \chi+\udl\chi)\right).
\end{eqnarray*}
Equation (\ref{a}) is verified by a sequence of direct computations:
\begin{eqnarray*}
 \sum_{a, b}\sigma_{ab} \frac{\partial P_{r, s}(\chi, \udl\chi)}{\partial \chi_{ab}} &=& \frac{1}{\binom{r+s}{r}}\frac{1}{r!}\frac{d^r}{d t^r}\Big|_{t=0}\sum_{a, b}\sigma_{ab}\frac{\partial \sigma_{r+s}(t \chi+\udl\chi)}{\partial \chi_{ab}} \\
 &=& \frac{1}{\binom{r+s}{r}}\frac{1}{r!} \frac{d^r}{d t^r}\Big|_{t=0}\left[\left(n-(r+s)\right)\sigma_{r+s-1}(t \chi+\udl\chi) t\right] \\
 &=& \frac{1}{\binom{r+s}{r}}\frac{1}{r!} \left(n-(r+s)\right) r\frac{d^{r-1}}{d t^{r-1}}\Big|_{t=0}\left[\sigma_{r+s-1}(t \chi+\udl\chi) \right]\\
 &=&\frac{n-(r+s)}{\binom{r+s}{r}} \binom{r+s-1}{r-1} P_{r-1, s}(\chi, \udl\chi)\\
 &=& \frac{\left(n-(r+s)\right) r}{ r+s}P_{r-1, s}(\chi, \udl\chi),
\end{eqnarray*}
where we use equation $\sum_{i=1}^{n-1} \frac{\partial\sigma_k(\lambda)}{\partial \lambda_i}=(n-k)\sigma_{k-1}(\lambda)$ to get the second identity above. 
\end{proof}

\medskip
\par
We now briefly review G\aa rding's inequality for polarized elementary symmetric functions and apply it to deduce a version of Newton-MacLaurin inequality for $P_{r,s}(\chi, \udl\chi)$ which is used several times in the previous sections. First, we recall the definition of the positive cone for $\sigma_k$:
\begin{definition}\label{Gamma_k cone}
For $1\leq k\leq n-1$, let $\Gamma_k$ be a cone in $\mathbb R^{n-1}$ defined by 
\[
\Gamma_k = \ \{\lambda \in \mathbb R^{n-1} \ : \ \sigma_1 (\lambda)>0, \cdots, \sigma_k(\lambda)>0 \},
\] where \[\sigma_k(\lambda) = \sum_{i_1<\cdots< i_k} \lambda_{i_1} \cdots \lambda_{i_k}\] is the $k$-th symmetric function. 
An $(n-1)\times (n-1)$ symmetric matrix $W$ is said to belong to $\Gamma_k$ if its spectrum $\lambda(W) \in \Gamma_k$. 
\end{definition}

\par
According to the standard theory for elementary symmetric functions from the hyperbolic polynomials point of view (see Corollary 13.1 and Proposition 13.3 in \cite{Guan2004}), we know that $\Gamma_k$ is the positive cone for both $\sigma_k$ and its polarization $\sigma_{(k)}$, i.e., 
\[
\sigma_{(k)}(W^1, \cdots, W^k)>0,\ \text{ for } W^i\in \Gamma_k \text{ with } i=1, \cdots, k.
\]
In view of the relation \eqref{polar2}, we also have
\begin{eqnarray}\label{positivity}
P_{r,s}(\chi, \udl \chi) >0, ,\ \text{ for } \chi, \udl\chi \in \Gamma_{r+s}.
\end{eqnarray}

\smallskip
The following lemma is a special case of a theorem of G\aa rding for hyperbolic polynomials, which can be found in \cite{Garding} (or see Appendix of \cite{Guan} or \cite{Hormander}). 

\begin{lemma}
For any $W^i\in \Gamma_k$ or $-\Gamma^k$, $i=1, \cdots, k$, we have
\begin{eqnarray*}\label{Garding}
\sigma_{(k)}^2 \left(W^1, W^2, W^3, \cdots, W^k\right) \geq \sigma_{(k)} \left(W^1, W^1, W^3, \cdots, W^k\right) \sigma_{(k)} \left(W^2, W^2, W^3, \cdots, W^k\right).
\end{eqnarray*}
The equality holds if and only if $W^1$ and $W^2$ are multiples of each other.
\end{lemma}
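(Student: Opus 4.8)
The plan is to recognize the stated inequality as the instance of G\aa rding's inequality for hyperbolic polynomials applied to $\sigma_k$, and to organize the argument around the hyperbolicity of $\sigma_k$ with respect to the identity direction. Recall that $\sigma_k$, viewed as a homogeneous polynomial of degree $k$ on $\mathbb{R}^{n-1}$, is hyperbolic with respect to $\mathbf{1}=(1,\dots,1)$: for every $\lambda\in\mathbb{R}^{n-1}$ the univariate polynomial $t\mapsto\sigma_k(\lambda+t\mathbf{1})$ has only real roots, and its hyperbolicity cone is precisely $\Gamma_k$, the component of $\{\sigma_k\neq0\}$ containing $\mathbf{1}$. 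Since $\sigma_{(k)}$ is multilinear in its arguments and the asserted inequality is invariant under flipping the sign of any single $W^i$ (the left-hand side is a square, and on the right each sign change affects the two factors an even number of times in total), I may first reduce to the case where every $W^i$ lies in $\Gamma_k$. The positivity $\sigma_{(k)}(W^1,\dots,W^k)>0$ on the cone is already recorded in the discussion preceding this lemma, so it may be invoked freely.

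The heart of the argument is a reduction to a Lorentzian quadratic form. Fixing $W^3,\dots,W^k\in\Gamma_k$, I consider the symmetric bilinear form $B(X,Y):=\sigma_{(k)}(X,Y,W^3,\dots,W^k)$. The key structural fact is that the quadratic form $X\mapsto B(X,X)$ is a positive multiple of a degree-two hyperbolic polynomial with respect to $\mathbf{1}$ whose hyperbolicity cone contains $\Gamma_k$. This is proved by induction on the degree, using G\aa rding's fundamental lemma that the directional derivative $D_z\sigma_j$ along a direction $z$ in the hyperbolicity cone is again hyperbolic with cone containing $\Gamma_j$; iterating this $k-2$ times in the directions $W^3,\dots,W^k$ lowers the degree to two. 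A degree-two hyperbolic polynomial is exactly a quadratic form of Lorentzian signature $(+,-,\dots,-)$, with $\Gamma_k$ sitting inside its timelike cone.

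Granting the Lorentzian signature, the inequality is the standard reverse Cauchy--Schwarz: for a quadratic form $B$ of signature $(1,N-1)$ and two vectors $W^1,W^2$ in the same timelike cone, so that $B(W^1,W^1),B(W^2,W^2)>0$, one has $B(W^1,W^2)^2\geq B(W^1,W^1)\,B(W^2,W^2)$, with equality if and only if $W^1$ and $W^2$ are proportional. Translating back through $B=\sigma_{(k)}(\cdot,\cdot,W^3,\dots,W^k)$ yields the claim together with its equality case. The main obstacle is precisely the structural step establishing the Lorentzian signature of $B$, namely that iterated directional derivatives of $\sigma_k$ along cone directions preserve hyperbolicity and the inclusion of cones; this is G\aa rding's theorem, for which I would cite \cite{Garding} (see also the expositions in \cite{Guan} and \cite{Hormander}) rather than reprove the derivative lemma from scratch.
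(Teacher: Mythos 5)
Your proposal is correct and follows essentially the same route as the paper: after the same sign-flip reduction to $W^i\in\Gamma_k$, the paper also fixes $W^3,\dots,W^k$ and observes that $p(x)=\sigma_{(k)}(x,x,W^3,\dots,W^k)$ is hyperbolic with respect to every element of $\Gamma_k$ (citing Guan's appendix for what you derive via iterated directional derivatives), then applies the usual G\aa rding inequality to the polarization of this degree-two polynomial, which is exactly your Lorentzian reverse Cauchy--Schwarz step. The only difference is that you spell out the hyperbolicity of the quadratic form and its signature where the paper cites the literature.
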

\begin{proof}
We may assume $W^i \in \Gamma_k$ for all $i$ because changing $W^i$ to $-W^i$ does not change the desired inequality. Since $W^3, \ldots, W^k \in \Gamma_k,$ the homogeneous polynomial $p(x) = \sigma_{(k)}(x,x,W^3, \ldots,W^k)$ is hyperbolic with respect to every element in $\Gamma_k$ (see Appendix of \cite{Guan}). The assertion follows by applying the usual G\aa rding's inequality to the complete polarization of $p$.

\medskip
\end{proof}
The above G\aa rding's inequality yields the following Newton-MacLaurin inequality for $P_{r, s}(\chi, \udl\chi) $.
\begin{lemma}
If $\chi$ and  $\udl\chi$  are both in $ \Gamma_{r+s-1}$ cone, we have
 \begin{eqnarray}\label{NM}
 P_{r-1, s}^2 (\chi, \udl\chi) \geq c(n, r, s) P_{r, s} (\chi, \udl\chi) P_{r-2, s} (\chi, \udl\chi),
 \end{eqnarray}
 where $c(n, r, s) = \frac{\binom{n-1}{r-1+s}^2}{\binom{n-1}{r-2+s}\binom{n-1}{r+s}} = \frac{r+s}{r+s-1}\cdot\frac{n-(r+s)+1}{n-(r+s)}$. The equality holds if and only if $\chi=cI_{n-1}$ for some constant $c$.
\end{lemma}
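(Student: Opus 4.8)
The plan is to realize the three quantities $P_{r,s}$, $P_{r-1,s}$, $P_{r-2,s}$ as values of a single polarized elementary symmetric function and then invoke the G\aa rding inequality established above in its reverse Cauchy--Schwarz form. Set $k=r+s$. By \eqref{polar2} we have $P_{r,s}(\chi,\udl\chi)=\sigma_{(k)}(\chi^{r},\udl\chi^{s})$, where throughout $\chi^{j}$ (resp.\ $\udl\chi^{j}$) abbreviates $j$ repeated slots filled by $\chi$ (resp.\ $\udl\chi$); note that $P_{r,s}$ is an order-$k$ polarization while $P_{r-1,s}$ and $P_{r-2,s}$ are polarizations of order $k-1$ and $k-2$. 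First I would lift the latter two to order $k$ by inserting identity matrices: comparing \eqref{polar1} at orders $k$ and $k-j$ shows that inserting $j$ copies of $I_{n-1}$ into an order-$(k-j)$ polarization reproduces the order-$k$ polarization up to the factor $\binom{n-1}{k}/\binom{n-1}{k-j}$. Concretely,
\[
\sigma_{(k)}(\chi^{r-1},\udl\chi^{s},I_{n-1})=\tfrac{\binom{n-1}{k}}{\binom{n-1}{k-1}}\,P_{r-1,s},\qquad
\sigma_{(k)}(\chi^{r-2},\udl\chi^{s},I_{n-1}^{2})=\tfrac{\binom{n-1}{k}}{\binom{n-1}{k-2}}\,P_{r-2,s}.
\]

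Next I would apply the G\aa rding inequality at order $k=r+s$ with $W^1=\chi$, $W^2=I_{n-1}$, and the remaining $k-2$ repeated arguments taken to be $r-2$ copies of $\chi$ and $s$ copies of $\udl\chi$. This yields
\[
\bigl[\sigma_{(k)}(\chi^{r-1},\udl\chi^{s},I_{n-1})\bigr]^{2}\ \ge\ \sigma_{(k)}(\chi^{r},\udl\chi^{s})\,\sigma_{(k)}(\chi^{r-2},\udl\chi^{s},I_{n-1}^{2}).
\]
Substituting the three identifications and cancelling one factor of $\bigl(\binom{n-1}{k}/\binom{n-1}{k-1}\bigr)^{2}$ produces exactly $P_{r-1,s}^{2}\ge \tfrac{\binom{n-1}{k-1}^{2}}{\binom{n-1}{k}\binom{n-1}{k-2}}P_{r,s}P_{r-2,s}$; the stated closed form follows from $\binom{n-1}{k}=\tfrac{n-k}{k}\binom{n-1}{k-1}$ and $\binom{n-1}{k-2}=\tfrac{k-1}{n-k+1}\binom{n-1}{k-1}$, which give $c(n,r,s)=\tfrac{k}{k-1}\cdot\tfrac{n-k+1}{n-k}$ with $k=r+s$. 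For the equality case, the equality clause of the G\aa rding inequality forces $W^{1}=\chi$ and $W^{2}=I_{n-1}$ to be proportional, i.e.\ $\chi=c\,I_{n-1}$; conversely this choice saturates the inequality by direct substitution.

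The main obstacle is verifying the hypothesis of the G\aa rding inequality, namely that the repeated arguments built from $\chi$ and $\udl\chi$ lie in the hyperbolicity cone for which the reverse Cauchy--Schwarz estimate is valid, so that the quadratic form $x\mapsto \sigma_{(k)}(x,x,\chi^{r-2},\udl\chi^{s})$ has Lorentzian signature with $I_{n-1}$ in its positivity cone. This is precisely where the cone condition $\chi,\udl\chi\in\Gamma_{r+s-1}$ enters, while \eqref{positivity} guarantees $P_{r-1,s},P_{r-2,s}>0$ so that the inequality is non-vacuous. Extra bookkeeping care is needed because the three curvature quantities live at three \emph{different} polarization orders, which is exactly why the identity-insertion normalization of the first step---rather than a naive application of the lemma at a single order---is indispensable.
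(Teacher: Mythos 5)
Your argument is correct and is essentially the paper's own proof: both apply G\aa rding's inequality to the order-$(r+s)$ polarization with the two distinguished slots occupied by $\chi$ and $I_{n-1}$ and the repeated slots by $r-2$ copies of $\chi$ and $s$ copies of $\udl\chi$, then convert the three resulting quantities to $P_{r-1,s}$, $P_{r,s}$, $P_{r-2,s}$ via the normalization in \eqref{polar1}--\eqref{polar2}, and read off the equality case from the proportionality clause. Your identity-insertion bookkeeping is just a repackaging of the paper's passage through $\sigma_{(n-1)}$, and the constant $c(n,r,s)$ comes out identically.
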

\begin{proof}
When $P_{r, s}(\chi, \udl\chi) \leq 0$, the inequality (\ref{NM})  is trivial. We thus assume $P_{r, s}(\chi, \udl\chi) \geq 0$.  Replacing $k$ by $(r+s)$ in the above G\aa rding's inequality \eqref{Garding}, we obtain
\begin{align*}
&\sigma_{(r+s)}^2 \left(W^1, W^2, W^3, \cdots, W^r, W^{r+1}, \cdots, W^{r+s}\right) \\
&\geq \sigma_{(r+s)} \left(W^1, W^1, W^3, \cdots, W^r, W^{r+1}, \cdots, W^{r+s}\right) \sigma_{(r+s)} \left(W^2, W^2, W^3, \cdots, W^r, W^{r+1}, \cdots, W^{r+s}\right).
\end{align*}

Setting $W^1= I_{n-1}, W^2=\cdots=W^r= \chi$ and $W^{r+1}=\cdots= W^{r+s} =\udl\chi$ and rewriting in terms of the complete polarization \eqref{polar1}, we derive
\begin{align*}
&\dbinom{n-1}{r+s}^2 \sigma_{(n-1)}^2 ( \underbrace{\chi, \cdots, \chi}_{r-1}, \underbrace{\udl\chi, \cdots, \udl\chi}_s, \underbrace{I_{n-1}, \cdots, I_{n-1}}_{(n-1)-(r-1+s)}   ) \\
&\geq \dbinom{n-1}{r+s}^2 \sigma_{(n-1)}( \underbrace{\chi, \cdots, \chi}_{r-2}, \underbrace{\udl\chi, \cdots, \udl\chi}_s, \underbrace{I_{n-1}, \cdots, I_{n-1}}_{(n-1)-(r-2+s)}   ) \sigma_{(n-1)}( \underbrace{\chi, \cdots, \chi}_{r}, \underbrace{\udl\chi, \cdots, \udl\chi}_s, \underbrace{I_{n-1}, \cdots, I_{n-1}}_{(n-1)-(r+s)}) 
\end{align*}
From G\aa rding's inequality, we also see that the equality holds if and only if $\chi$ and $I_{n-1}$ are proportional.
\par
On the other hand, using (\ref{polar1}) and (\ref{polar2}) again,
\begin{eqnarray*}
\sigma_{(n-1)} ( \underbrace{\chi, \cdots, \chi}_{r-1}, \underbrace{\udl\chi, \cdots, \udl\chi}_s, \underbrace{I_{n-1}, \cdots, I_{n-1}}_{(n-1)-(r-1+s)} )&=& \frac{1}{\dbinom{n-1}{r-1+s}}\sigma_{(r-1+s)}( \underbrace{\chi, \cdots, \chi}_{r-1}, \underbrace{\udl\chi, \cdots, \udl\chi}_s )\\
&=&  \frac{1}{\dbinom{n-1}{r-1+s}} P_{r-1, s}(\chi, \udl\chi),\\
\sigma_{(n-1)}( \underbrace{\chi, \cdots, \chi}_{r-2}, \underbrace{\udl\chi, \cdots, \udl\chi}_s, \underbrace{I_{n-1}, \cdots, I_{n-1}}_{(n-1)-(r-2+s)} )&=& \frac{1}{\dbinom{n-1}{r-2+s}}\sigma_{(r-2+s)}( \underbrace{\chi, \cdots, \chi}_{r-2}, \underbrace{\udl\chi, \cdots, \udl\chi}_s )\\
&=&  \frac{1}{\dbinom{n-1}{r-2+s}} P_{r-2, s}(\chi, \udl\chi),\\
\sigma_{(n-1)}( \underbrace{\chi, \cdots, \chi}_{r}, \underbrace{\udl\chi, \cdots, \udl\chi}_s, \underbrace{I_{n-1}, \cdots, I_{n-1}}_{(n-1)-(r+s)} )&=& \frac{1}{\dbinom{n-1}{r+s}}\sigma_{(r+s)}( \underbrace{\chi, \cdots, \chi}_{r}, \underbrace{\udl\chi, \cdots, \udl\chi}_s )\\
&=&  \frac{1}{\dbinom{n-1}{r+s}} P_{r, s}(\chi, \udl\chi)
\end{eqnarray*}
Thus, we reach that
\begin{eqnarray*}
P_{r-1, s}^2(\chi, \udl\chi) \geq \frac{\binom{n-1}{r-1+s}^2}{\binom{n-1}{r-2+s}\binom{n-1}{r+s}} P_{r-2, s}(\chi, \udl\chi) \cdot P_{r, s}(\chi, \udl\chi).
\end{eqnarray*}
\end{proof}

\section{The Existence of Conformal Killing-Yano forms}
In this appendix, we show the existence of conformal Killing-Yano form for a class of warped-product manifold. We recall the following equivalent definition of conformal Killing-Yano $p$-forms using the twistor equation \cite[Definition 2.1]{S}.
\begin{definition} A $p-$form $Q$ on an $n-$dimensional pseudo-Riemannian manifold $(V,g)$ is said to be a conformal Killing-Yano form if $Q$ satisfies the twistor equation
\begin{align}\label{twistor equation}
D_X Q - \frac{1}{n+1} X \lrcorner\, dQ + \frac{1}{n-p+1} g(X) \wedge d^*Q=0
\end{align}
for all tangent vector $X$.
\end{definition}
The main result of the appendix is the following existence theorem.
\begin{theorem}
Let $U \subset \mathbb{R}^n$ and $V \subset \mathbb{R}^m$ be two open sets. Let $G$ be a warped-product metric on $U \times V$ of the form
\begin{align*}
R^2(y) \sigma_{ab}(x) dx^a dx^b + g_{ij}(y) dy^i dy^j .
\end{align*}
Then $Q = R^{n+1}(y) \sqrt{\det\sigma_{ab}} dx^1 \wedge \cdots \wedge dx^n$ and $\ast Q = R(y) \sqrt{\det g_{ij}} dy^1\wedge\cdots \wedge dy^m$ are both conformal Killing-Yano forms.
\end{theorem}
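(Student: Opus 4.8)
The plan is to verify the twistor equation \eqref{twistor equation} directly in a frame adapted to the warped product structure, viewing $U\times V$ as a warped product with base $(V,g)$ (coordinates $y$) and fiber $(U,\sigma)$ (coordinates $x$) warped by $R$. First I would record that $Q = R\,\omega_F$ and $\ast Q = \pm\,R\,\omega_B$, where $\omega_F = R^n\sqrt{\det\sigma}\,dx^1\wedge\cdots\wedge dx^n$ is the warped fiber volume form and $\omega_B = \sqrt{\det g}\,dy^1\wedge\cdots\wedge dy^m$ is the base volume form. Since $R\sqrt{\det g}$ depends only on $y$, one sees at once that $d(\ast Q)=0$, hence $d^\ast Q = \pm\ast d\ast Q = 0$; thus $Q$ is co-closed and, for the $n$-form $Q$ on the $(n+m)$-dimensional manifold, the twistor equation collapses to the single identity $D_X Q = \frac{1}{n+1}\,X\lrcorner\, dQ$ for every $X$ (the $d^\ast Q$ term, which carries the coefficient $\frac{1}{m+1}$, drops out). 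The assertion for $\ast Q$ then follows from the general fact that the Hodge dual of a conformal Killing--Yano form is again conformal Killing--Yano (\cite{S}); alternatively it can be checked by the mirror computation, now with the roles of $dQ$ and $d^\ast(\ast Q)$ interchanged since $\ast Q$ is closed.

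Next I would set up an orthonormal frame $\{E_i\}_{i=1}^m$ on the base and $\{E_a\}_{a=1}^n$ on the fiber, the latter being $E_a = R^{-1}\hat E_a$ for a fixed $\sigma$-orthonormal fiber frame $\hat E_a$, with dual coframe $\{\epsilon^i\},\{\epsilon^a\}$ so that $\omega_F = \epsilon^1\wedge\cdots\wedge\epsilon^n$. The structural inputs are the warped-product connection formulas: for a base field $X$ and a lifted fiber field $W$ one has $D_X W = \frac{X(R)}{R}\,W$, while for fiber fields $V,W$ one has $D_V W = D^F_V W - \frac{\langle V,W\rangle}{R}\,\mathrm{grad}_B R$. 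From the first formula together with $E_a = R^{-1}\hat E_a$, a one-line computation gives $D_{E_i}E_a = 0$, so the warped fiber coframe is parallel in base directions and $D_{E_i}\omega_F = 0$. From the second formula, the second fundamental form of the fibers is $-\frac{\delta_{ac}}{R}\,\mathrm{grad}_B R$, and a direct computation shows that $D_{E_a}\epsilon^c$ splits into a fiber-metric (hence trace-free) part and the base term $-\frac{\delta^c_a}{R}\,dR$. The trace-free part drops out of $D_{E_a}\omega_F$ by antisymmetry, leaving only the $a$-th-slot contribution, so that $D_{E_a}\omega_F = -\frac{(-1)^{a-1}}{R}\,dR\wedge\epsilon^1\wedge\cdots\widehat{\epsilon^a}\cdots\wedge\epsilon^n$.

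On the other side the exterior derivative is immediate: since $d\omega_F = \frac{n}{R}\,dR\wedge\omega_F$, one obtains the clean expression $dQ = (n+1)\,dR\wedge\omega_F$. It then remains to contract and compare. For a base direction $E_i$, the identity $E_i\lrcorner\omega_F = 0$ gives $\frac{1}{n+1}E_i\lrcorner dQ = E_i(R)\,\omega_F$, which matches $D_{E_i}Q = E_i(R)\,\omega_F + R\,D_{E_i}\omega_F = E_i(R)\,\omega_F$. For a fiber direction $E_a$, using $E_a(R)=0$ one finds $\frac{1}{n+1}E_a\lrcorner dQ = -\,dR\wedge(E_a\lrcorner\omega_F) = -(-1)^{a-1}\,dR\wedge\epsilon^1\wedge\cdots\widehat{\epsilon^a}\cdots\wedge\epsilon^n$, which equals $D_{E_a}Q = R\,D_{E_a}\omega_F$ by the previous paragraph. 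Since every tangent vector is a combination of the $E_i$ and the $E_a$, the identity $D_X Q = \frac{1}{n+1}\,X\lrcorner\, dQ$ holds for all $X$, completing the verification that $Q$ is conformal Killing--Yano.

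The main obstacle is bookkeeping rather than anything conceptual: correctly extracting the base-component correction $-\frac{\langle E_a,E_c\rangle}{R}\,\mathrm{grad}_B R$ (equivalently, the resulting base part $-\frac{\delta^c_a}{R}\,dR$ of $D_{E_a}\epsilon^c$, which is forced by metricity and the non-commutativity of the warped frame) and tracking how it threads through the $a$-th slot of $\omega_F$, so that the factor $R^{-1}$ and the sign $(-1)^{a-1}$ line up exactly with the contraction $E_a\lrcorner dQ$. One must also confirm that the fiber-metric part of $D_{E_a}E_c$ contributes nothing, which is where the antisymmetry of the fiber connection matrix in an orthonormal frame is used; getting these signs and factors consistent is the only delicate point.
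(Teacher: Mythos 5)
Your proposal is correct and follows essentially the same route as the paper: both reduce to checking the twistor equation for $Q$ alone via the Hodge-star duality of conformal Killing--Yano forms, observe $d^{*}Q=0$ because $R$ depends only on $y$, and then verify $D_XQ=\frac{1}{n+1}X\lrcorner\,dQ$ in an adapted orthonormal frame using the warped-product connection (which you obtain from the O'Neill formulas where the paper solves the structure equations --- a cosmetic difference only). Your sign and factor bookkeeping for $D_{E_a}\omega_F$ and $E_a\lrcorner\,dQ$ checks out against the paper's computation.
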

\begin{proof}
By \cite[Lemma 2.3]{S}, the Hodge star-operator $\ast$ maps conformal Killing-Yano $p-$form into conformal Killing-Yano $(n+m-p)-$form. It suffices to verify that $Q$ satisfies the twistor equation. Let $\omega^\alpha, \alpha = 1, \ldots, n+m$ be a local orthonormal coframe for $G$ such that $\omega^1, \ldots, \omega^n$  is an orthonormal coframe for $R^2(y) \sigma_{ab}(x) dx^a dx^b$ on each slice $U \times \{c_1\}$ and $\omega^{n+1},\ldots, \omega^{n+m}$ is an orthonormal coframe for $g_{ij}(y) dy^i \wedge dy^j$ on each slice $\{c_2\} \times V$. Let $E_\alpha$ be the dual frame to $\omega^\alpha.$ If we write $\Omega = \omega^1 \wedge \cdots \wedge \omega^n,$ then $Q = R \Omega.$ From the structure equations
\begin{align*}
d\omega^a &= -\omega^a_{\;\;b} \wedge \omega^b - \omega^a_{\;\;n+i} \wedge \omega^{n+i} = dR\wedge\sigma^a - R \gamma^a_{\;\;b}\wedge \sigma^b \\
d\omega^{n+i} &= -\omega^{n+i}_{\;\;b} \wedge \omega^b - \omega^{n+i}_{\;\;n+j} \wedge \omega^{n+j},
\end{align*}
we solve for the connection 1-forms
\begin{align*}
\omega^a_{\;\;n+i} = \frac{E_{n+i}(R)}{R} \omega^a, \quad \omega^a_{\;\;b} = \gamma^a_{\;\;b}
\end{align*}
where $\gamma^a_{\;\;b}$ are the connection 1-forms with respect to the metric $\sigma_{ab}(x) dx^a dx^b.$ 

We compute each term in the twistor equation.
\begin{align*}
D_X Q = X(R) \Omega + R \na_X \Omega = X(R) \Omega - \sum_{i=1}^m E_{n+i}(R) \omega^{n+i} \wedge (X \lrcorner\,  \Omega).
\end{align*}
\begin{align*}
X \lrcorner\, d\Omega &= X \lrcorner\,  \sum_{i=1}^m \left( -\omega^1_{\;\;n+i} \wedge\omega^{n+i} \wedge \omega^2 \wedge \cdots \wedge \omega^n + \omega^1 \wedge\omega^2_{\;\;i} \wedge \omega^{n+i} \wedge \cdots \wedge \omega^n - \cdots \right)\\
&= X \lrcorner\, \sum_{i=1}^m n \left( \frac{E_{n+i}(R)}{R} \omega^{n+i} \wedge \Omega \right)\\
&= n \sum_{i=1}^m \frac{E_{n+i}(R)}{R} X \lrcorner\, (\omega^{n+i} \wedge \Omega)\\
&= n \sum_{i=1}^m \frac{E_{n+i}(R)}{R} \left( \omega^{n+i}(X) \Omega - \omega^{n+i} \wedge	(X \lrcorner\, \Omega) \right)
\end{align*}
This implies that
\begin{align*}
X \lrcorner\, dQ &=X \lrcorner\, (dR \wedge \Omega + R d\Omega) \\
&= X(R) \Omega - dR \wedge (X \lrcorner\, \Omega) + n \sum_{i=1}^m E_{n+i}(R) \left( \omega^{n+i}(X) \Omega - \omega^{n+i} \wedge (X \lrcorner\, \Omega) \right).
\end{align*}
On the other hand, $d^* Q =0$ since $R$ only depends on $y$. 
Putting these facts together, we verify that $Q$ satisfies the twistor equation
\begin{align*}
& D_X Q - \frac{1}{n+1} X \lrcorner\, dQ + \frac{1}{m+1} g(X) \wedge d^*Q\\
&= X(R) \Omega - \sum_{i=1}^m E_{n+i}(R) \omega^{n+i} \wedge (X \lrcorner\, \Omega) \\
&\quad - \frac{1}{n+1} \Big( X(R) \Omega - dR \wedge (X \lrcorner\, \Omega) -  n \sum_{i=1}^m E_{n+i}(R)\left( \omega^{n+i}(X) \Omega - \omega^{n+i} X \lrcorner\, \Omega \right)  \Big)\\
&= \frac{n}{n+1} X(R) \Omega - \frac{1}{n+1} E_{n+i}(R) \omega^{n+i} \wedge (X \lrcorner\, \Omega) \\
&\quad + \frac{1}{n+1} dR \wedge(X \lrcorner\, \Omega) - \frac{n}{n+1} E_{n+i}(R) \omega^{n+i}(X) \Omega \\
&= 0.
\end{align*}
We use the fact that $R$ only depends on $y$ in the last equality.
\end{proof}
We have the following existence result, generalizing the fact that $r dr\wedge dt$ is a conformal Killing-Yano two-form on the Minkowski and Schwarzschild spacetime.

\begin{corollary}\label{existence of CKY in warped product}
Let $(V,g)$ be a warped product manifold with
\begin{align}
g = g_{tt}(t,r)dt^2 + 2g_{tr}(t,r)dt dr + g_{rr}(t,r)dr^2 + r^2(g_N)_{ab}dx^a dx^b
\end{align}
where $(N,g_N)$ is an $(n-1)-$dimensional Riemannian manifold. Then the two-form \begin{align*}
Q = r \sqrt{\left|\det \left( \begin{array}{cc}
g_{tt}&g_{tr}\\
g_{rt}&g_{rr}
\end{array} \right)\right|} \;\;dr \wedge dt
\end{align*}
is a conformal Killing-Yano two-form on $(V,g).$
\end{corollary}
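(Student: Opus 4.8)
The plan is to read off the corollary as the special case of the preceding existence theorem that corresponds to the dual form $\ast Q$, once the two warped-product structures are matched. In the notation of that theorem, I would take the warped (``$U$'') factor to be the fiber $N$ and the base (``$V$'') factor to be the two-dimensional $(t,r)$-block. Concretely, identify the theorem's coordinates $x=(x^1,\dots,x^{n-1})$ with local coordinates on $N$, its base coordinates $y=(y^1,y^2)=(r,t)$ with the $(t,r)$-variables, its warping function $R(y)$ with $r$, its slice metric $\sigma_{ab}(x)$ with $(g_N)_{ab}(x)$, and its base metric $g_{ij}(y)\,dy^i dy^j$ with $g_{tt}\,dt^2+2g_{tr}\,dt\,dr+g_{rr}\,dr^2$. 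In the theorem's own letters this means ``$n$''$=n-1$ and ``$m$''$=2$. All hypotheses hold: $R=r$ and the block entries $g_{tt},g_{tr},g_{rr}$ depend only on the base variables $(t,r)$, while $(g_N)_{ab}$ depends only on $x$.

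With this dictionary the theorem produces two conformal Killing-Yano forms, and the relevant one is the dual $\ast Q=R\sqrt{|\det g_{ij}|}\,dy^1\wedge dy^2$, which is a two-form precisely because ``$m$''$=2$. Substituting the identifications gives
\[
\ast Q = r\,\sqrt{\left|\det\begin{pmatrix} g_{tt} & g_{tr}\\ g_{rt} & g_{rr}\end{pmatrix}\right|}\;\, dr\wedge dt,
\]
up to orientation, which is exactly the two-form in the statement; since conformal Killing-Yano-ness is unaffected by an overall sign, the ordering of $dr\wedge dt$ is immaterial. Because $r,t$ are genuine coordinates and the two-by-two determinant is a globally defined density, this two-form is well defined on all of $V$.

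Two points require a word of care, and these are where the argument must be \emph{stated} rather than merely quoted. First, the theorem is phrased for an open set $U\subset\mathbb{R}^{n-1}$, whereas here $N$ is an arbitrary Riemannian manifold; since the twistor equation \eqref{twistor equation} (equivalently Definition \ref{C-K}) is a pointwise, tensorial condition, it suffices to verify it in an arbitrary coordinate chart on $N$, and the globally defined two-form above then satisfies it everywhere. Second, the $(t,r)$-block is Lorentzian rather than Riemannian, so the orthonormal coframe and Hodge-star computations in the proof of the theorem should be run with a pseudo-orthonormal coframe; the structure equations, the vanishing $d^\ast Q=0$, and the twistor identity are insensitive to signature, and $\sqrt{\det g_{ij}}$ is replaced by $\sqrt{|\det g_{ij}|}$ throughout, matching the absolute value in the statement. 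I expect the only genuine obstacle to be this bookkeeping---in particular confirming that it is the lower-degree dual form $\ast Q$, and not the top form $Q=r^{n}\sqrt{\det g_N}\,dx^1\wedge\cdots\wedge dx^{n-1}$, that has degree two---since the substantive analytic content is already contained in the theorem.
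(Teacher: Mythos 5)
Your proposal is correct and is exactly how the paper intends the corollary to be read: the paper gives no separate proof, and the statement is precisely the specialization of the preceding theorem with the fiber $U$ taken to be $N$ (so the theorem's ``$n$'' equals $n-1$), the base $V$ the two-dimensional $(t,r)$-block (``$m$''$=2$), and $R(y)=r$, the relevant form being the dual $\ast Q$. Your two remarks on passing from coordinate charts to a general $N$ and on the Lorentzian signature of the base block (whence $\sqrt{|\det|}$) are exactly the implicit bookkeeping the paper relies on.
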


\section{Curvature tensors of the Schwarzschild spacetime}
We consider the $(n+1)$-dimensional (exterior) Schwarzschild spacetime with the metric  \eqref{Schwarzschild}. The spacetime admits a conformal Killing-Yano tensor $Q = r dr \wedge dt$.
Let $Q^2$ be the symmetric 2-tensor given by
\begin{equation}\label{Q^2} (Q^2)_{\alpha\beta} = Q_\alpha^{\;\;\gamma} Q_{\gamma\beta}.\end{equation}
\begin{lemma}\label{curvature tensor}
The curvature tensor of Schwarzschild spacetime can be expressed as
\begin{align}
\begin{split}
\bar{R}_{\alpha\beta\gamma\delta} &= \frac{2m}{r^n} \lt( \bar{g}_{\alpha\gamma} \bar{g}_{\beta\delta} - \bar{g}_{\alpha\delta} \bar{g}_{\beta\gamma} \rt) - \frac{n(n-1)m}{r^{n+2}} \lt( \frac{2}{3}	Q_{\alpha\beta} Q_{\gamma\delta} - \frac{1}{3} Q_{\alpha\gamma}Q_{\delta\beta}-\frac{1}{3}Q_{\alpha\delta}Q_{\beta\gamma} \rt) \\
&\quad - \frac{nm}{r^{n+2}} \Big( \bar{g}\circ Q^2 \Big)_{\alpha\beta\gamma\delta} \label{curvature tensor in terms of Q: Schwarzschild}
\end{split}
\end{align}
where $(\bar{g}\circ Q^2)_{\alpha\beta\gamma\delta} = \bar{g}_{\alpha\gamma} (Q^2)_{\beta\delta} - \bar{g}_{\alpha\delta} (Q^2)_{\beta\gamma} + \bar{g}_{\beta\delta} (Q^2)_{\alpha\gamma} - \bar{g}_{\beta\gamma} (Q^2)_{\alpha\delta}$ 
\end{lemma}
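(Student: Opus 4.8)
The plan is to reduce the identity to a finite, frame-wise verification, exploiting that both sides are algebraic curvature tensors invariant under the spherical symmetry and the static structure. Concretely, I would work in the orthonormal frame $e_0 = \frac{1}{f}\partial_t$, $e_1 = f\partial_r$ (with $f^2 = 1 - 2m/r^{n-2}$), together with an orthonormal frame $e_2,\ldots,e_n$ on the round sphere rescaled by $1/r$. In this frame $\bar g = \mathrm{diag}(-1,1,\ldots,1)$, and the two-form $Q = r\,dr\wedge dt$ has the single independent component $Q_{01} = -r$, so that $Q^2 = Q_\alpha{}^\gamma Q_{\gamma\beta}$ is diagonal with $(Q^2)_{00} = -r^2$, $(Q^2)_{11} = r^2$, and vanishing spherical components; equivalently $Q^2 = r^2\,\bar g$ on the $(t,r)$-block and $0$ on the spherical block.

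First I would record the left-hand side. Since the Schwarzschild metric is the warped product $B^2 \times_r S^{n-1}$ over the Lorentzian base $B^2$ with metric $-f^2\,dt^2 + f^{-2}\,dr^2$ and warping function $r$, O'Neill's warped-product curvature formulas give all components of $\bar R$ in terms of $\mathrm{Hess}(r)$, $|\nabla r|^2 = f^2$, and the Gaussian curvature of $B^2$. A short computation gives $ff' = (n-2)m/r^{n-1}$, hence $\mathrm{Hess}(r)(e_0,e_0) = -ff'$ and $\mathrm{Hess}(r)(e_1,e_1) = ff'$, while $\partial_r(ff') = -(n-1)(n-2)m/r^n$ controls the base curvature. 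In the sign convention for which the asserted identity holds, this produces the four independent component types
\[
\bar R_{0101} = -\frac{(n-1)(n-2)m}{r^n},\quad \bar R_{0A0A} = \frac{(n-2)m}{r^n},\quad \bar R_{1A1A} = -\frac{(n-2)m}{r^n},\quad \bar R_{ABAB} = \frac{2m}{r^n},
\]
all remaining components being fixed by the Riemann symmetries and the $SO(n)$ symmetry.

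Next I would expand the right-hand side in the same frame on each of these four types, using the values of $Q$ and $Q^2$ above. The key structural reduction is that both sides are algebraic curvature tensors: the first and third terms are Kulkarni--Nomizu products of symmetric tensors, and the quadratic term $\tfrac{2}{3}Q_{\alpha\beta}Q_{\gamma\delta} - \tfrac{1}{3} Q_{\alpha\gamma}Q_{\delta\beta} - \tfrac{1}{3} Q_{\alpha\delta}Q_{\beta\gamma}$ is precisely the combination that restores the first Bianchi identity for a $2$-form. Together with the spherical and static symmetry, it therefore suffices to check the four component types listed above; for each, a direct substitution confirms equality (for instance, on the $(0101)$ type the three terms contribute $-\frac{2m}{r^n}$, $-\frac{n(n-1)m}{r^n}$, and $+\frac{2nm}{r^n}$, summing to $-\frac{(n-1)(n-2)m}{r^n}$).

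Finally, as a consistency check that also streamlines the bookkeeping, I would verify that the right-hand side is totally trace-free: contracting with $\bar g^{\alpha\gamma}$ gives $\mathrm{tr}(Q^2) = 2r^2$ and $\bar g^{\alpha\gamma}Q_{\alpha\beta}Q_{\gamma\delta} = -(Q^2)_{\beta\delta}$, and the three terms' traces cancel exactly, so $\mathrm{Ric} = 0$. This matches the vacuum property of the Schwarzschild spacetime and shows that the coefficients $\frac{n(n-1)m}{r^{n+2}}$ and $\frac{nm}{r^{n+2}}$, as well as the $\tfrac{2}{3}$--$\tfrac{1}{3}$ split, are forced. The main obstacle is not conceptual but bookkeeping: keeping the sign and normalization conventions of $\bar R$, $Q$, and the Kulkarni--Nomizu product mutually consistent, and correctly tracking the $n$-dependence of the coefficients. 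Once the frame components of $\bar R$, $Q$, and $Q^2$ are tabulated, the verification is a routine finite check.
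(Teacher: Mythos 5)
Your proposal is correct and takes essentially the same route as the paper's proof: both work in the orthonormal frame $\{E_\alpha\}$ adapted to the static spherical symmetry, tabulate the nonzero components of $\bar R$, $Q$, $Q^2$ and of the three candidate tensors on the four independent component types, and match coefficients (your sample check on the $(0101)$ type and your values for $Q_{01}$, $(Q^2)_{00}$, $(Q^2)_{11}$ agree exactly with the paper's table). The only cosmetic differences are that you derive the curvature components from O'Neill's warped-product formulas and add a trace/Ricci-flatness consistency check, whereas the paper simply lists the components and solves for the three undetermined coefficients.
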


\begin{proof}
Denote $f^2 = 1-\frac{2m}{r^{n-2}}$. Let $E_1, E_2, \ldots, E_{n+1}$ be the orthonormal frames for $\bar{g}$ with $E_{n+1} = \frac{1}{f} \frac{\pl}{\pl t}, E_n = f \frac{\pl}{\pl r}$ and $E_i, i=1, \ldots, n-1$ tangent to the sphere of symmetry. We have
\begin{align*}
\bar{R}(E_{n+1},E_n, E_{n+1}, E_n) &= -\frac{m(n-1)(n-2)}{r^n} \\
\bar{R}(E_{n+1},E_i, E_{n+1},E_j) &= \frac{m(n-2)}{r^n} \delta_{ij} \\
\bar{R}(E_n,E_i,E_n,E_j) &= -\frac{m(n-2)}{r^n} \delta_{ij} \\
\bar{R}(E_i,E_j,E_k,E_l) &= \frac{2m}{r^n} (\delta_{ik}\delta_{jl} - \delta_{il}\delta_{jk})
\end{align*} 
Except for the symmetries of the curvature tensors, the other components are zero.

On the other hand, we have $Q(E_n,E_{n+1}) = r, (Q^2)(E_{n+1},E_{n+1}) = -r^2,$ and $(Q^2)(E_n,E_n) = r^2$. Let $b(Q) = \frac{2}{3}	Q_{\alpha\beta} Q_{\gamma\delta} - \frac{1}{3} Q_{\alpha\gamma}Q_{\delta\beta}-\frac{1}{3}Q_{\alpha\delta}Q_{\beta\gamma}$. The following table lists the nonzero components for the $(0,4)$-tensors involved.
\begin{align*}
\begin{array}{c|ccc}
T & \bar{g}_{\alpha\gamma} \bar{g}_{\beta\delta} - \bar{g}_{\alpha\delta} \bar{g}_{\beta\gamma}  & b(Q)& (\bar{g} \circ Q^2)_{\alpha\beta\gamma\delta}\\
\hline
T(E_{n+1},E_n,E_{n+1},E_n) & -1 & r^2 & -2r^2\\
T(E_{n+1},E_i,E_{n+1},E_j) & -\delta_{ij} & 0 & -r^2 \delta_{ij}\\
T(E_n,E_i,E_n,E_j) & \delta_{ij} & 0 & r^2 \delta_{ij}\\
T(E_i,E_j,E_k,E_l) & \delta_{ik}\delta_{jl}-\delta_{il}\delta_{jk} &0 &0
\end{array}
\end{align*}
Suppose $\bar{R}_{\alpha\beta\gamma\delta} = A \frac{m}{r^n} \lt( \bar{g}_{\alpha\gamma}\bar{g}_{\beta\delta} - \bar{g}_{\alpha\delta}\bar{g}_{\beta\gamma} \rt) + B \frac{m}{r^{n+2}} \lt( \frac{2}{3}	Q_{\alpha\beta} Q_{\gamma\delta} - \frac{1}{3} Q_{\alpha\gamma}Q_{\delta\beta}-\frac{1}{3}Q_{\alpha\delta}Q_{\beta\gamma} \rt) + C \frac{m}{r^{n+2}} (\bar{g} \circ Q^2)_{\alpha\beta\gamma\delta}$. We can solve for $A=2, B=-n(n-1),$ and $C=-n$.
\end{proof}
  
\bigskip
{
\end{document}